\newcommand{\seq}{\coloneqq}
\numberwithin{equation}{section}
\newtheorem{Thm}{Theorem}[section]
\newtheorem{Cor}[Thm]{Corollary}
\newtheorem{Prop}[Thm]{Proposition}
\newtheorem{Lem}[Thm]{Lemma}
\theoremstyle{definition}
\newtheorem{Def}[Thm]{Definition}
\newtheorem{Rem}[Thm]{Remark}
\newtheorem{Ex}[Thm]{Example}
\newtheorem*{DefThm}{Definition \& Claim}
\newcommand{\Hom}{\mathop{\mathrm{Hom}}\nolimits}
\newcommand{\Aut}{\mathop{\mathrm{Aut}}\nolimits}
\newcommand{\Ext}{\mathop{\mathrm{Ext}}\nolimits}
\newcommand{\Ker}{\mathop{\mathrm{Ker}}\nolimits}
\newcommand{\gext}{\mathop{\mathrm{ext}}\nolimits}
\newcommand{\gtor}{\mathop{\mathrm{tor}}\nolimits}
\newcommand{\diag}{\mathop{\mathrm{diag}}\nolimits}
\newcommand{\rank}{\operatorname{\mathrm{rank}}}
\newcommand{\lcm}{\mathop{\mathrm{lcm}}\nolimits}
\newcommand{\N}{\mathbb{N}}
\newcommand{\Z}{\mathbb{Z}}
\newcommand{\Q}{\mathbb{Q}}
\newcommand{\F}{\mathbb{F}}
\newcommand{\kk}{\Bbbk}
\newcommand{\bD}{\mathbb{D}}
\newcommand{\se}{\mathrm{s}}
\newcommand{\tl}{\mathrm{t}}
\newcommand{\op}{\mathrm{op}}
\newcommand{\id}{\mathrm{id}}
\newcommand{\fg}{\mathfrak{g}}
\newcommand{\fd}{\textfrak{d}}
\newcommand{\sQ}{\mathsf{Q}}
\newcommand{\tC}{\widetilde{C}}
\newcommand{\tQ}{\widetilde{Q}}
\newcommand{\tPi}{\widetilde{\Pi}}
\newcommand{\tG}{\widetilde{G}}
\newcommand{\ul}{\underline}
\newcommand{\ep}{\varepsilon}
\newcommand{\umod}{\text{-$\mathrm{mod}$}}
\newcommand{\mmu}{\ul{\mu}}
\newcommand{\hK}{\hat{K}}
\newcommand{\Lotimes}{\overset{\mathbf{L}}{\otimes}}
\newcommand{\CKP}{C^{\mathrm{KP}}}
\subjclass[2020]{16G20, 17B37, 16W50, 17B67, 81R50}
\keywords{Deformed Cartan matrices; Generalized preprojective algebras}
\thanks{R.F. was supported by JSPS Overseas Research Fellowships, and JSPS KAKENHI Grant Number JP23K12955 during the revision.}
\thanks{K.M. was supported by the Kyoto Top Global University project, Grant-in-Aid for JSPS Fellows (JSPS KAKENHI Grant Number JP21J14653, JP23KJ0337 during the revision) and JSPS bilateral program (Grant
Number JPJSBP120213210).}
\title[DCM \and GPA]
{Deformed Cartan matrices and generalized preprojective algebras II: General type}
\date{\today}
\author[R.~Fujita]{Ryo Fujita}
\address[R.~Fujita]{Research Institute for Mathematical Sciences, Kyoto University, Oiwake-Kitashirakawa, Sakyo, Kyoto, 606-8502, Japan \& Institut de Math\'{e}matiques de Jussieu-Paris Rive Gauche, Universit\'{e} Paris Cit\'{e}, F-75013, Paris, France}
\email{rfujita@kurims.kyoto-u.ac.jp}
\author[K.~Murakami]{Kota Murakami}
\address[K.~Murakami]{Graduate School of Mathematical Sciences, The University of Tokyo
3-8-1 Komaba, Meguro, Tokyo, 153-8914 Japan \& Department of Mathematics, Kyoto University, Kitashirakawa Oiwake-cho, Sakyo-ku, Kyoto, 606-8502, Japan}
\email{murakami@ms.u-tokyo.ac.jp}
\begin{document} 
\maketitle
\begin{abstract}
We propose a definition of deformed symmetrizable generalized Cartan matrices with several deformation parameters, which admit a categorical interpretation by graded modules over the generalized preprojective algebras in the sense of Gei\ss-Leclerc-Schr\"oer.
Using the categorical interpretation, we deduce a combinatorial formula for the inverses of our deformed Cartan matrices in terms of braid group actions. 
Under a certain condition, which is satisfied in all the symmetric cases or in all the finite and affine cases, our definition coincides with that of the mass-deformed Cartan matrices introduced by Kimura-Pestun in their study of quiver $\mathcal{W}$-algebras. 
\end{abstract}
\tableofcontents

\section*{Introduction}  

In their study of the deformed $\mathcal{W}$-algebras, Frenkel-Reshetikhin~\cite{FR98} introduced a certain $2$-parameter deformation $C(q,t)$ of the Cartan matrix of finite type.
In the previous work~\cite{FM}, the present authors gave a categorical interpretation of this deformed Cartan matrix $C(q,t)$ in terms of bigraded modules over the generalized preprojective algebras in the sense of Gei\ss-Leclerc-Schr\"oer~\cite{GLS}.
More precisely, we have shown that the entries of the matrix $C(q,t)$ and its inverse $\tC(q,t)$ can be expressed by the Euler-Poincar\'e pairings of certain bigraded modules.

The definition of the generalized preprojective algebra is given in a generality of arbitrary symmetrizable Kac-Moody type by \cite{GLS}, and it admits a Weyl group symmetry~\cite{GLS,AHIKM} and a geometric realization of crystal bases \cite{GLS4}. As a sequel of \cite{FM}, the main purpose of the present paper is to propose a categorification of a several parameter deformation of arbitrary symmetrizable generalized Cartan matrix (GCM for short) by considering multi-graded modules over the generalized preprojective algebra.
In the context of theoretical physics, Kimura-Pestun~\cite{KP1, KP2} introduced \emph{the mass-deformed Cartan matrix}, a deformation of GCM with several deformation parameters, in their study of (fractional) quiver $\mathcal{W}$-algebras, which is a generalization of Frenkel-Reshetikhin's deformed $\mathcal{W}$-algebras.
%In the present paper, we introduce a similar deformation of GCM which admits a nice categorical interpretation in terms of graded modules over the generalized preprojective algebras. 
Our deformation essentially coincides with Kimura-Pestun's mass-deformed Cartan matrix under a certain condition which is satisfied in all the symmetric cases or in all the finite and affine cases (see \S\ref{Ssec:KP}).

To explain our results more precisely, let us prepare some kinds of terminology.
Let $C = (c_{ij})_{i,j \in I}$ be a GCM with a symmetrizer $D = \diag(d_i \mid i \in I)$.
We put $g_{ij} \seq \gcd(|c_{ij}|, |c_{ji}|)$ and $f_{ij} \seq |c_{ij}|/g_{ij}$ for $i,j \in I$ with $c_{ij} < 0$.
Associated with these data, we have the generalized preprojective algebra $\Pi$ defined over an arbitrary field (see \cite{GLS} for the precise definition or \S\ref{Ssec:GPA} for our convention).
We introduce the (multiplicative) abelian group $\Gamma$ generated by the elements 
\[ \{q,t\} \cup \{ \mu_{ij}^{(g)} \mid (i,j, g) \in I\times I \times \Z, c_{ij} < 0, 1 \le g \le g_{ij} \} \]  
which subject to the relations 
\[ \mu^{(g)}_{ij} \mu^{(g)}_{ji} = 1 \quad \text{for all $i, j \in I$ with $c_{ij} < 0$ and $1 \le g \le g_{ij}$}.\]
These elements play the role of deformation parameters.
Here, we introduced the parameters $\mu_{ij}^{(g)}$ in addition to $q$ and $t$ inspired by \cite{KP1, KP2}, where the counterparts are called \emph{mass-parameters}.
We endow a certain $\Gamma$-grading on the algebra $\Pi$ as in \eqref{eq:deg} below.
We can show that this grading on $\Pi$ is universal under a reasonable condition, see \S\ref{subsec:univ_grad}. With the terminology, we give the following definition of $(q, t, \mmu)$-deformation $C(q,t, \mmu)$ of GCM $C$, and propose a categorical framework which organizes some relevant combinatorics in terms of the $\Gamma$-graded $\Pi$-modules:
\begin{DefThm}
We define the $\Z[\Gamma]$-valued $I \times I$-matrix $C(q,t, \mmu)$ by the formula
\begin{equation} \label{eq:defC}
C_{ij}(q,t, \mmu) = \begin{cases} 
q^{d_i}t^{-1} + q^{-d_i}t & \text{if $i=j$};
\\ - [f_{ij}]_{q^{d_i}} \sum_{g = 1}^{g_{ij}}\mu_{ij}^{(g)} & \text{if $c_{ij} < 0$}; 
\\ 0 & \text{otherwise},
\end{cases}
\end{equation}
where $[k]_q = (q^{k}-q^{-k})/(q-q^{-1})$ is the standard $q$-integer.
We establish the following statements:
\begin{enumerate}
\item Each entry of $C(q,t, \mmu)$ and its inverse $\tC(q,t,\mmu)$ can be expressed as the Euler-Poincar\'e paring of certain $\Gamma$-graded $\Pi$-modules. (\S\ref{Ssec:EP})
\item Moreover, when $C$ is of infinite type, the formal expansion at $t=0$ of each entry of $\tC(q,t,\mmu)$ coincides with the $\Gamma$-graded dimension of a certain $\Pi$-module, and hence its coefficients are non-negative. (Corollary~\ref{Cor:chi})
\item\label{main:3} For general $C$, the formal expansion at $t=0$ of $\tC(q,t,\mmu)$ admits a combinatorial expression in terms of a braid group symmetry. 
(\S\S\ref{Ssec:cinv} \&  \ref{sec:braid})
\end{enumerate} 
\end{DefThm}

Note that if we consider the above~\eqref{main:3} for each finite type and some specific reduced words, then it recovers the combinatorial formula obtained by \cite{HL15} and \cite{KO} after some specialization. We might see our generalization as a kind of aspects of the Weyl/braid group symmetry of $\Pi$ about general reduced expressions (e.g. \cite{FG,M}).
When $C$ is of finite type, these results are essentially same as the results in our previous work \cite{FM}.

When $C$ is of infinite type, the algebra $\Pi$ is no longer finite-dimensional.
In this case, we find it suitable to work with the category of $\Gamma$-graded modules which are bounded from below with respect to the $t$-grading, and its \emph{completed} Grothendieck group.   
Then, the discussion is almost parallel to the case of finite type. 
Indeed, we give a uniform treatment which deals with the cases of finite type and of infinite type at the same time.      

In the case of finite type, the above combinatorial aspects of the deformed Cartan matrices play an important role in the representation theory of quantum loop algebras, see our previous work~\cite{FM} and references therein. 
We may expect that our results here on the deformed GCM are also useful in the study of quiver $\mathcal{W}$-algebras and the representation theory of quantum affinizations of Kac-Moody algebras in the future.

This paper is organized as follows.
In \S\ref{Sec:Cartan}, after fixing our notation, we discuss combinatorial aspects (i.e., a braid group action in \S\ref{Ssec:Braid} and the formula for $\tC(q,t,\mmu)$ using it in \S\ref{Ssec:cinv}) of our deformed Cartan matrices.  
The proofs of several assertions require the categorical interpretation and hence are postponed to the next section.
In \S\ref{Sec:gpa}, we discuss the categorical interpretation of our deformed GCM in terms of the graded modules over the generalized preprojective algebras.
The final \S\ref{Sec:Rem} consists of three remarks, which are logically independent from the other parts of the paper.
In \S\ref{Ssec:KP}, we compare our deformed GCM with the mass-deformed Cartan matrix in the sense of Kimura-Pestun~\cite{KP2}.
In \S\ref{subsec:univ_grad}, we show that our $\Gamma$-grading on $\Pi$ is universal among the gradings valued at free abelian groups.
In \S\ref{Ssec:species}, we briefly discuss the $t$-deformed GCM, which is obtained from our $C(q,t,\mmu)$ by evaluating all the deformation parameters except for $t$ at $1$, and its categorical interpretation by the classical representation theory of modulated graphs in the sense of Dlab-Ringel~\cite{DR}.
\subsection*{Conventions}
Throughout this paper, we use the following conventions. 
\begin{itemize}
\item For a statement $\mathrm{P}$, we set $\delta(\mathrm{P})$ to be $1$ or $0$ according that $\mathrm{P}$ is true or false. 
We often use the abbreviation $\delta_{x,y} \seq \delta(x=y)$ known as Kronecker's delta.
\item For a group $G$, let $\Z[G]$ denote the group ring and $\Z[\![G]\!]$ the set of formal sums $\{\sum_{g \in G} a_g g \mid a_g \in \Z\}$. 
Note that $\Z[\![ G ]\!]$ is a $\Z[G]$-module in the natural way.  
If $\Z[G]$ is a commutative integral domain, we write $\Q(G)$ for its fraction field. 
\end{itemize}

\section{Deformed Cartan matrices}
\label{Sec:Cartan}

\subsection{Notation}
\label{Ssec:notation}

Let $I$ be a finite set.
Recall that a $\Z$-valued $I \times I$-matrix $C = (c_{ij})_{i,j \in I}$ is called \emph{a symmetrizable generalized Cartan matrix}
if the following conditions are satisfied:
\begin{enumerate}
\renewcommand{\theenumi}{\rm C\arabic{enumi}}
\item \label{C1} $c_{ii} = 2$, $c_{ij} \in \Z_{\le 0}$ for all $i, j \in I$ with $i\neq j$, and $c_{ij} = 0$ if and only if $c_{ji} = 0$,
\item \label{C2} there is a diagonal matrix $D = \diag(d_i \mid i \in I)$ with $d_i \in \Z_{>0}$ for all $i \in I$ such that the product $DC$ is symmetric.
\end{enumerate}  
We call the diagonal matrix $D$ in \eqref{C2} \emph{a symmetrizer of $C$}.  
It is said to be \emph{minimal} when $\gcd(d_i \mid i \in I) =1$.
For $i, j \in I$, we write $i \sim j$ when $c_{ij} < 0$.   
We say that a symmetrizable generalized Cartan matrix $C$ is \emph{irreducible} if, for any $i, j \in I$, there is a sequence $i_1, \ldots, i_l \in I$ satisfying $i \sim i_1 \sim \cdots \sim i_l \sim j$.
In this case, a minimal symmetrizer of $C$ is unique, and any symmetrizer of $C$ is a scalar multiple of it.
From now on, by a GCM,
we always mean an irreducible symmetrizable generalized Cartan matrix.
We say that $C$ is of finite type if it is positive definite, and it is of infinite type otherwise.

Throughout this section, we fix a GCM $C=(c_{ij})_{i,j \in I}$ with its symmetrizer $D= \diag(d_i\mid i \in I)$.
For any $i, j \in I$ with $i \sim j$, we set 
\[ g_{ij} \seq \gcd(|c_{ij}|, |c_{ji}|), \quad f_{ij} \seq |c_{ij}|/g_{ij}, \quad d_{ij} \seq \gcd(d_i, d_j). \]  
By definition, we have $g_{ij} = g_{ji}, d_{ij} = d_{ji}$ and $f_{ij} = d_j/d_{ij}$.
Let $r \seq \lcm(d_i \mid i \in I)$.
We note that the transpose ${}^{\mathtt{t}}{C} = (c_{ji})_{i,j \in I}$ is also a GCM, whose minimal symmetrizer is $rD^{-1} = \diag(r/d_i \mid i\in I)$.  
Following \cite{GLS}, we say that a subset $\Omega \subset I \times I$ is \emph{an acyclic orientation of $C$} if the following conditions are satisfied: 
\begin{itemize}
\item $\{(i,j), (j,i)\} \cap \Omega \neq \varnothing$ if and only if $i \sim j$,
\item for any sequence $(i_1, i_2, \ldots, i_l)$ in $I$ with $l > 1$ and $(i_k, i_{k+1}) \in \Omega$ for all $1 \le k < l$, we have $i_1 \neq i_l$.
\end{itemize}

Let $\sQ = \bigoplus_{i \in I}\Z \alpha_i$ be the root lattice of the Kac-Moody algebra associated with $C$, where $\alpha_i$ is the $i$-th simple root for each $i \in I$.  
We write $s_i$ for the $i$-th simple reflection, which is an automorphism of $\sQ$ given by $s_i \alpha_j = \alpha_j - c_{ij}\alpha_i$ for $j \in I$.
The Weyl group $W$ is defined to be the subgroup of $\Aut(\sQ)$ generated by all the simple reflections $\{ s_i\}_{i \in I}$.
The pair $(W,\{s_i\}_{i \in I})$ forms a Coxeter system.     

\subsection{Deformed Cartan matrices}\label{Ssec:DCM}

Let $\Gamma$ be the (multiplicative) abelian group defined in Introduction.
As an abelian group, $\Gamma$ is free of finite rank. 
Let $\mmu^\Z$ denote the subgroup of $\Gamma$ generated by all the elements in $\{\mu_{ij}^{(g)} \mid i,j \in I, i \sim j, 1 \le g \le g_{ij}\}$.
Then we have $\Gamma = q^\Z \times t^\Z \times \mmu^\Z$, where $x^{\Z} \seq \{ x^k \mid k \in \Z\}$. 
If we choose an acyclic orientation $\Omega$ of $C$, we have
$\mmu^\Z = \prod_{(i,j)\in \Omega} \prod_{g=1}^{g_{ij}} (\mu_{ij}^{(g)})^\Z$.
In particular, the rank of $\Gamma$ is $2 + \sum_{(i,j) \in \Omega} g_{ij}$.
Consider the group ring $\Z[\Gamma]$ of $\Gamma$. 
Given an acyclic orientation $\Omega$ of $C$, it is identical to the ring of Laurent polynomials in the variables $q,t$ and $\mu_{ij}^{(g)}$ with $(i,j) \in \Omega$.  

We define \emph{the deformed generalized Cartan matrix} (\emph{deformed GCM} for short) $C(q,t, \mmu)$ to be the $\Z[\Gamma]$-valued $I \times I$-matrix whose $(i,j)$-entry $C_{ij}(q,t, \mmu)$ is given by
the formula~\eqref{eq:defC} in Introduction.
We often evaluate all the parameters $\mu_{ij}^{(g)}$ at $1$ and write $C(q,t)$ for the resulting $\Z[q^{\pm 1}, t^{\pm 1}]$-valued matrix.
More explicitly, its $(i,j)$-entry is given by
\[
C_{ij}(q,t) \seq \delta_{i,j} (q^{d_i}t^{-1} + q^{-d_i}t) - \delta(i \sim j)
g_{ij}[f_{ij}]_{q^{d_i}}. \]   
We refer to the matrix $C(q,t)$ as the \emph{$(q,t)$-deformed GCM}.
Note that we have $[d_i]_q C_{ij}(q,t) = g_{ij}[d_i f_{ij}]_q$ whenever $i\neq j$, and hence the matrix $([d_i]_q C_{ij}(q,t))_{i,j \in I}$ is symmetric.

\begin{Rem}
When the GCM $C$ is of finite type, the matrix $C(q,t)$ coincides with the $(q,t)$-deformed Cartan matrix considered in \cite{FR98}. 
A deformed GCM of general
type is also considered in \cite{KP1, KP2}, called \emph{the mass deformed Cartan matrix}. 
We discuss the difference between our definition and the definition in \cite{KP2} in \S\ref{Ssec:KP}. 
\end{Rem} 

Let $\Gamma_0 \seq q^\Z \times \mmu^\Z \subset \Gamma$.
Since $\Gamma = t^\Z \times \Gamma_0$, we have $\Z[\Gamma] = \Z[\Gamma_0][t^{\pm 1}]$. 
Letting $q^{\pm D} \seq \diag(q^{\pm d_i} \mid i \in I)$, we can write
\begin{equation} \label{eq:X}
C(q,t,\mmu) = (\id-tX)t^{-1}q^{D},
\end{equation}
for some $\Z[\Gamma_0][t]$-valued matrix $X$.
In particular, the matrix $C(q,t,\mmu)$ is invertible as a $\Z[\Gamma_0](\!(t)\!)$-valued matrix and its inverse $\tC(q,t,\mmu) = (\tC_{ij}(q,t,\mmu))$ is given by 
\[ \tC(q,t,\mmu) = q^{-D}t\left( \id + \sum_{k > 0} X^k t^{k} \right).\]
\begin{Ex}
    Even if we begin with a non-invertible GCM $C$, we obtain $C(q, t, \mmu)$ as an invertible matrix. For example, if we take
    \[C=\begin{pmatrix}
        2 & -2 \\
        -2 & 2
    \end{pmatrix}\quad \text{and}\quad D=
    \diag(1, 1),\]
    then we obtain
    \begin{equation} \label{eq:A11}
C(q,t,\mmu)=
\begin{pmatrix}
    qt^{-1}+q^{-1}t & -(\mu_{12}^{(1)}+\mu_{12}^{(2)})\\
    -(\mu_{21}^{(1)}+\mu_{21}^{(2)}) & qt^{-1}+q^{-1}t
\end{pmatrix}.
    \end{equation}
    Since $\det C(q,t,\mmu) = q^2 t^{-2}- (\mu_{12}^{(1)}\mu_{21}^{(2)}+\mu_{21}^{(1)}\mu_{12}^{(2)})+q^{-2}t^2 \in \Z[\Gamma_0](\!(t)\!)^{\times}$, our $C(q,t,\mmu)$ is invertible.
\end{Ex}

\begin{Thm} \label{Thm:tCpos}
When $C$ is of infinite type, the matrix $\tC(q,t,\mmu)$ has non-negative coefficients, namely we have $\tC_{ij}(q,t,\mmu) \in \Z_{\ge 0}[\Gamma_0][\![t]\!]$ for any $i,j \in I$.
\end{Thm}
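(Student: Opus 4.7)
The conclusion is not manifest from the geometric-series formula
\[
\tC(q,t,\mmu) = q^{-D}t\Bigl(\id + \sum_{k>0} X^k t^k\Bigr),
\]
since a direct extraction of $X$ from \eqref{eq:X} gives $X_{ii} = -q^{-2d_i}t$, so $X$ has negative diagonal entries and the naive power-series coefficients carry alternating signs. Positivity must therefore be extracted through the categorical interpretation developed in \S\ref{Sec:gpa}; in fact the statement will reappear there as (part of) Corollary~\ref{Cor:chi}, and my plan is to let the proof factor through that result.

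First I would introduce, as suggested in the introduction, the category of $\Gamma$-graded $\Pi$-modules bounded below in the $t$-grading, together with its completed Grothendieck group. On that completed group, the graded dimension $\gdim$ takes values in $\Z_{\ge 0}[\Gamma_0][\![t]\!]$; so non-negativity of $\tC_{ij}(q,t,\mmu)$ will follow as soon as one exhibits a $\Pi$-module $M_{ij}$ in this category whose graded dimension equals the $t$-expansion of $\tC_{ij}(q,t,\mmu)$.

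The construction of $M_{ij}$ should proceed via one of two routes. The first uses the braid-group formula of \S\ref{sec:braid} (statement~(3) of the Definition~\&~Claim): interpret each summand of the combinatorial expansion of $\tC_{ij}(q,t,\mmu)$ as the graded dimension of a single layer in a filtration of a graded projective $\Pi$-module, obtained by applying the Weyl/braid group symmetry of $\Pi$ along an infinite reduced expression in $W$. The second, closer to the Euler-pairing viewpoint of \S\ref{Ssec:EP}, is to realize the $t$-expansion of $\tC_{ij}(q,t,\mmu)$ directly as the graded dimension of $e_j P_i$ (or an analogous graded Hom-space) for a suitable graded projective cover $P_i$ of the simple at $i$ inside the category just introduced. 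In either case one gets $M_{ij}$ as a genuine module, not a virtual one.

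The principal obstacle is the transition from an Euler-characteristic formula, which \emph{a priori} carries signs, to a plain graded dimension. This is exactly where the infinite-type hypothesis enters: when $C$ is of finite type, the generalized preprojective algebra $\Pi$ is finite-dimensional and self-injective, so higher Ext terms in the Euler pairing make real contributions and the analogue of $\tC_{ij}(q,t,\mmu)$ as a formal sum does have cancellations; when $C$ is of infinite type, however, $W$ is infinite, so the braid-group expansion never "wraps around" and the relevant higher Ext groups vanish, allowing the Euler characteristic to collapse to the graded dimension of a single module bounded below in $t$. Verifying this vanishing (equivalently, the convergence of the braid-group expansion as a formal power series in $t$) is the technical heart of the argument and the main place where the assumption "$C$ of infinite type" is consumed.
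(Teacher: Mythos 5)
Your plan is essentially the paper's proof: the theorem is deduced from Corollary~\ref{Cor:chi}~\eqref{Cor:chi:2}, which identifies $\tC_{ij}(q,t,\mmu)$ with $q^{-d_j}t\dim_\Gamma(e_i\bar{P}_j)$ for the honest module $\bar{P}_j = P_j/P_j\ep_j$ in $\Pi\umod_\Gamma^+$, whence positivity is immediate. Your diagnosis of where the infinite-type hypothesis enters is also the paper's: by Theorem~\ref{Thm:Pres} one has $\Ker\psi^{(i)}=0$ (no wrap-around term $q^{-rh^\vee}t^h\mu_{i^*i}E_{i^*}$ as in finite type), so $\langle E_i,S_j\rangle_\Gamma = q^{-d_i}tC_{ij}(q,t,\mmu)$ on the nose and the inverse matrix becomes a genuine graded dimension rather than a signed Euler characteristic.
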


A proof will be given in the next section (see Corollary~\ref{Cor:chi}~\eqref{Cor:chi:2} below).

\begin{Rem}
If we evaluate all the deformation parameters except for $q$ at $1$ in \eqref{eq:A11}, we get a $q$-deformed Cartan matrix $C(q)$, which is different from the naive $q$-deformation $C'(q)$, where
\[C(q) = \begin{pmatrix} [2]_q&-2 \\ -2 & [2]_q\end{pmatrix}, \qquad  
C'(q) = \begin{pmatrix} [2]_q& [-2]_q \\ [-2]_q & [2]_q \end{pmatrix}.\]
Note that $C(q)$ is invertible, while $C'(q)$ is not invertible. 
See also Remark~\ref{Rem:qC} below for a related discussion on $q$-deformed Cartan matrices.
In the context of the representation theory of quantum affinizations, the choice of $q$-deformation of GCM affects the definition of the algebra.   
For the quantum affinization of  $\widehat{\mathfrak{sl}}_2$, the matrix $C(q)$ was used by Nakajima \cite[Remark 3.13]{Nak11} and also adopted by Hernandez in \cite{Her11}.  
See \cite[Remark 4.1]{Her11}.
\end{Rem}

\subsection{Braid group actions}\label{Ssec:Braid}
Let $\Q(\Gamma)$ denote the fraction field of $\Z[\Gamma]$.
Let $\phi$ be the automorphism of the group $\Gamma$ given by $\phi(q) = q$, $\phi(t) = t$, and $\phi(\mu_{ij}^{(g)}) = \mu_{ji}^{(g)}$ for all possible $i,j \in I$ and $g$. 
It induces the automorphisms of $\Z[\![\Gamma]\!]$ and $\Q(\Gamma)$, for which we again write $\phi$. 
We often write $a^{\phi}$ instead of $\phi(a)$.

Consider the $\Q(\Gamma)$-vector space $\sQ_\Gamma$ given by 
\[ \sQ_\Gamma \seq \Q(\Gamma)\otimes_{\Z}\sQ = \bigoplus_{i \in I} \Q(\Gamma)\alpha_i.\]
We endow $\sQ_\Gamma$ with a non-degenerate $\phi$-sesquilinear hermitian form $(-,-)_\Gamma$ by
\[ (\alpha_i, \alpha_j)_\Gamma \seq [d_i]_q C_{ij}(q,t, \mmu)\]
for each $i,j \in I$.
Here the term ``$\phi$-sesquilinear hermitian" means that it satisfies
\[ (ax,by)_{\Gamma} = a^\phi b (x,y)_\Gamma, \quad (x,y)_\Gamma = (y,x)_\Gamma^\phi \]
for any $x,y \in \sQ_\Gamma$ and $a,b \in \Q(\Gamma)$.  
Let $\{\alpha_i^\vee\}_{i \in I}$ be another basis of $\sQ_\Gamma$ defined by 
\[
\alpha_i^\vee \seq q^{-d_i}t[d_i]_q^{-1} \alpha_i.
\] 
It is thought of a deformation of simple coroots.
We have 
\[(\alpha_i^\vee, \alpha_j)_\Gamma = q^{-d_i}tC_{ij}(q,t,\mmu)\] 
for any $i,j \in I$.
Let $\{ \varpi_i^\vee \}_{i\in I}$ denote the dual basis of $\{\alpha_i\}_{i \in I}$ with respect to $(-,-)_\Gamma$. 
We also consider the element $\varpi_i \seq [d_i]_q \varpi_i^\vee$ for each $i \in I$.
With these conventions, we have 
\begin{equation} \label{eq:ao}
\alpha_i = \sum_{j \in I}C_{ji}(q,t,\mmu)\varpi_j, \qquad
\alpha_i^\vee = q^{-d_i}t\sum_{j \in I} C_{ij}(q,t,\mmu)^\phi \varpi_j^\vee.
\end{equation}

For each $i \in I$, we define a $\Q(\Gamma)$-linear automorphism $T_i$ of $\sQ_\Gamma$ by
\begin{equation} \label{Baction}
T_i x \seq x - (\alpha_i^\vee, x)_\Gamma \alpha_i 
\end{equation}
for $x \in \sQ_\Gamma$.
In terms of the basis $\{ \alpha_i \}_{i \in I}$, we have
\begin{equation} \label{eq:Troot}
T_i \alpha_j = \alpha_j - q^{-d_i}tC_{ij}(q,t,\mmu) \alpha_i.
\end{equation}
Thus, the action \eqref{Baction} can be thought of a deformation of the $i$-th simple reflection $s_i$.
Note that our $\mathbb{Q}(\Gamma)$-linear automorphisms $T_i\,(i\in I)$ of $\sQ_\Gamma$ recover the braid group actions that were introduced in \cite{Cha} and \cite{BP} for finite type cases after certain specializations (see \cite[Section~1.3]{FM}).
%On the other hand, in terms of the basis $\{\alpha^\vee_i \}_{i \in I}$, we have
%\begin{equation} \label{Tcoroot}
%T_i \alpha_j^\vee = \alpha_j^\vee - q^{-d_j}tC_{ji}(q,t, \mmu)^\phi\alpha_i^\vee.
%\end{equation}

%\begin{Lem}\label{lem:eqpair}
%For any $x, y \in \sQ_\Gamma$ and $i \in I$, we have $(T_ix, y)_\Gamma = (x, T_iy)_\Gamma$.
%\end{Lem}
%\begin{proof} A straightforward computation
%\[(T_ix, y)_\Gamma = (x, y)_\Gamma - q^{-d_i}t[d_i]_q^{-1}(\alpha_i, x)_\Gamma^\phi (\alpha_i, y)_\Gamma = (T_iy, x)_\Gamma^\phi = (x, T_i y)_\Gamma \]
%yields the assertion.
%\end{proof}

\begin{Prop}
\label{Prop:braidrel}
The operators $\{ T_i \}_{i \in I}$ define an action of the braid group associated to the Coxeter system $(W, \{s_i\}_{i \in I})$, i.e., they satisfy the braid relations
:
\begin{alignat*}{2}
T_i T_j &= T_j T_i &\qquad & \text{if $c_{ij}=0$}, \\
T_i T_j T_i &= T_j T_i T_j && \text{if $c_{ij}c_{ji}=1$}, \\
(T_i T_j)^k &= (T_j T_i)^k && \text{if $c_{ij}c_{ji}=k$ with $k \in \{2,3\}$}. 
\end{alignat*}
\end{Prop}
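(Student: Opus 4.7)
The plan is to reduce each braid relation to an identity on a low-dimensional invariant subspace of $\sQ_\Gamma$ and verify it there by direct matrix computation, all driven by a single closed formula for $C_{ij}(q,t,\mmu)\,C_{ji}(q,t,\mmu)$.

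\emph{Reduction.} From \eqref{eq:Troot}, $T_i\alpha_k - \alpha_k \in \Q(\Gamma)\alpha_i$ for all $i,k \in I$. Hence for distinct $i,j \in I$, both $T_i$ and $T_j$ preserve the two-dimensional subspace $V_{ij} \seq \Q(\Gamma)\alpha_i \oplus \Q(\Gamma)\alpha_j$ and, for each $k \in I \setminus \{i,j\}$, the three-dimensional subspace $V_{ijk} \seq V_{ij} \oplus \Q(\Gamma)\alpha_k$; moreover they act as the identity on $\sQ_\Gamma/V_{ij}$. Since these subspaces together span $\sQ_\Gamma$, it suffices to check each braid relation on them. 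The case $c_{ij}=0$ is immediate, since then $C_{ij}(q,t,\mmu)=C_{ji}(q,t,\mmu)=0$, giving $T_i\alpha_j=\alpha_j$ and $T_j\alpha_i=\alpha_i$, whence $T_iT_j=T_jT_i$ on all of $\sQ_\Gamma$.

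\emph{Key identity.} For $c_{ij}c_{ji} \in \{1,2,3\}$, set $a\seq q^{-d_i}tC_{ij}(q,t,\mmu)$, $b\seq q^{-d_j}tC_{ji}(q,t,\mmu)$, and $Q_i\seq q^{-2d_i}t^2$. Since $g_{ij}=1$ in all these cases, the unique mass parameter $\mu_{ij}^{(1)}$ satisfies $\mu_{ij}^{(1)}\mu_{ji}^{(1)}=1$, so $C_{ij}C_{ji}=[f_{ij}]_{q^{d_i}}[f_{ji}]_{q^{d_j}}$. Expanding the $q$-integers yields
\[ ab=\begin{cases} Q_i\,(=Q_j) & \text{if } c_{ij}c_{ji}=1,\\ Q_i+Q_j & \text{if } c_{ij}c_{ji}=2,\\ Q_i+q^{-(d_i+d_j)}t^2+Q_j & \text{if } c_{ij}c_{ji}=3,\end{cases} \]
the middle summand in the last case being $\sqrt{Q_iQ_j}\in\Q(\Gamma)$.

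\emph{Matrix verification.} In the basis $\{\alpha_i,\alpha_j\}$, the matrices $T_i|_{V_{ij}}$ and $T_j|_{V_{ij}}$ are triangular with off-diagonals $-a,-b$ and diagonals $(-Q_i,1),(1,-Q_j)$, so both $T_iT_j$ and $T_jT_i$ on $V_{ij}$ share the trace $-Q_i-Q_j+ab$ and determinant $Q_iQ_j$. Combined with the identity above: for $c_{ij}c_{ji}=2$ the trace vanishes, so Cayley--Hamilton gives $(T_iT_j)^2=-Q_iQ_j\cdot\id=(T_jT_i)^2$; for $c_{ij}c_{ji}=3$, iterating Cayley--Hamilton gives $(T_iT_j)^3=-(Q_iQ_j)^{3/2}\cdot\id=(T_jT_i)^3$; and for $c_{ij}c_{ji}=1$, multiplying out the three-fold $2\times 2$ products directly yields $T_iT_jT_i=T_jT_iT_j$. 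Extending each relation from $V_{ij}$ to $V_{ijk}$ is parallel: one computes the image of $\alpha_k$ under each side of the braid relation and checks that the $V_{ij}$-valued correction terms coincide, the required cancellations again being dictated by the same identity for $ab$. The main obstacle is the bookkeeping in these iterated matrix products---most intricate for $c_{ij}c_{ji}=3$ and for the extension step on $V_{ijk}$---but conceptually everything rests on the single identity for $ab$ isolated above.
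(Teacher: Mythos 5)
Your proof is correct in substance but takes a genuinely different route from the paper's. The paper does not verify the braid relations by computation at all: it first reduces the finite-type case to the corresponding untwisted affine type (extending $\{T_i\}_{i\in I}$ to $\{T_i\}_{i\in I\cup\{0\}}$), and then, for $C$ of infinite type, identifies $T_i$ with the operator $[M]\mapsto[J_i\Lotimes_\Pi M]$ on $\hK(\Pi\umod_\Gamma^+)$ via Lemma~\ref{Lem:JT}, so that the braid relations are inherited from the multiplicative braid relations $J_iJ_jJ_i=J_jJ_iJ_j$, etc., for the two-sided ideals $J_i=\Pi(1-e_i)\Pi$, quoted from \cite[Theorem~4.7]{FG}. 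Your argument is instead an elementary rank-two computation, and its engine is sound: for $c_{ij}c_{ji}\le 3$ one indeed has $g_{ij}=1$, the mass parameters cancel by $\mu^{(1)}_{ij}\mu^{(1)}_{ji}=1$, and $d_i f_{ij}=d_j f_{ji}$ turns $[f_{ij}]_{q^{d_i}}[f_{ji}]_{q^{d_j}}$ into exactly the stated expressions for $ab$ (note $\sqrt{Q_iQ_j}=q^{-(d_i+d_j)}t^2\in\Gamma$, so no field extension is needed); the trace, determinant and Cayley--Hamilton computations on $V_{ij}$ then go through as you say. What your approach buys is self-containedness: the proposition could be proved in \S\ref{Sec:Cartan} where it is stated, with no appeal to the categorical machinery of \S\ref{Sec:gpa} or to \cite{FG}. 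What it costs is the part you defer as bookkeeping, and that part is a genuine logical necessity, not a formality: two operators can agree on an invariant subspace and induce the identity on the quotient without being equal, so the check on $V_{ijk}$ cannot be skipped. It does work out --- for instance, for $c_{ij}c_{ji}=1$ both sides of the length-three relation send $\alpha_k\mapsto\alpha_k+(ab_k-a_k)\alpha_i+(ba_k-b_k)\alpha_j$ with $a_k\seq q^{-d_i}tC_{ik}(q,t,\mmu)$ and $b_k\seq q^{-d_j}tC_{jk}(q,t,\mmu)$, and for $c_{ij}c_{ji}=2$ both sides of the length-four relation send $\alpha_k\mapsto\alpha_k+(ab_k-a_k-a_kQ_j)\alpha_i+(ba_k-b_k-b_kQ_i)\alpha_j$, in each case using only your identity for $ab$ --- but as written these verifications (and the longer length-six one) are asserted rather than performed, so your text is a complete proof for the restriction to $V_{ij}$ and a verified plan for the rest.
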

A proof will be given in \S\ref{sec:braid} below (after Lemma~\ref{Lem:JT}). %Note that for finite types these operators $\{T_i\}_{i\in I}$ are essentially introduced in \cite{Cha,BP} and it is known they actually define a braid group action. For finite types, our Proposition~\ref{Prop:braidrel} gives an alternative proof of the existence of the braid group action in \textit{loc.~cit.} after the specialization of several parameters.

Given $w \in W$, we choose a reduced expression $w = s_{i_1}s_{i_2} \cdots s_{i_l}$ and set $T_w \seq T_{i_1} T_{i_2} \cdots T_{i_l}$.
By Proposition~\ref{Prop:braidrel}, $T_w$ does not depend on the choice of reduced expression.

\subsection{Remark on finite type}
In this subsection, we assume that $C$ is of finite type. 
Since we always have $g_{ij}=1$ in this case, we write $\mu_{ij}$ instead of $\mu_{ij}^{(1)}$.
For any $(i,j) \in I$, we define $\mu_{ij} \seq \mu_{i,i_1} \mu_{i_1,i_2} \cdots \mu_{i_k,j}$, where $(i_1, \ldots, i_k)$ is any finite sequence in $I$ such that $i \sim i_1 \sim i_2 \sim \cdots \sim i_k\sim j$.  
Note that the element $\mu_{ij} \in \Gamma$ does not depend on the choice of such a sequence.
Let $[-]_{\mmu =1} \colon \Z[\Gamma] \to \Z[q^{\pm 1}, t^{\pm 1}]$ denote the map induced from the specialization $\mmu^{\Z} \to \{1\}$. 
Recall $C_{ij}(q,t) = [C_{ij}(q,t,\mmu)]_{\mmu = 1}$ by definition.

\begin{Lem} 
\label{Lem:fin}
When $C$ is of finite type, for any $i,j \in I$ and a sequence $(i_1,\ldots,i_k)$, we have
\[ (\varpi_i^\vee, T_{i_1} \cdots T_{i_k} \alpha_{j})_\Gamma = \mu_{ij} [(\varpi_i^\vee, T_{i_1} \cdots T_{i_k} \alpha_{j})_\Gamma]_{\mmu=1}.\]
\end{Lem}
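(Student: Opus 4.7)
The plan is to prove the lemma by induction on $k$ through the following stronger auxiliary assertion: for every $j \in I$ and every sequence $(i_1, \ldots, i_k) \in I^k$, we have an expansion
\[ T_{i_1} T_{i_2} \cdots T_{i_k} \alpha_j = \sum_{l \in I} \mu_{lj}\, a_{l}\, \alpha_l \]
in which each coefficient $a_l$ lies in $\Z[q^{\pm 1}, t^{\pm 1}]$, i.e.\ carries no $\mmu$-dependence. With the convention $\mu_{jj} \seq 1$, the base case $k = 0$ reduces to the trivial identity $\alpha_j = \alpha_j$, and once this refined statement is established the lemma will follow by pairing with $\varpi_i^\vee$.

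For the inductive step I would apply $T_{i_1}$ term by term using the explicit formula \eqref{eq:Troot}. The operator $T_{i_1}$ leaves $\alpha_l$ unchanged when $l \neq i_1$ and $l \not\sim i_1$, sends $\alpha_{i_1}$ to $-q^{-2d_{i_1}} t^2 \alpha_{i_1}$, and when $l \sim i_1$ with $l \neq i_1$ produces an additional $\alpha_{i_1}$-contribution with coefficient $q^{-d_{i_1}} t\, [f_{i_1 l}]_{q^{d_{i_1}}}\, \mu_{i_1 l}$. Hence the only new mass parameter that can appear is $\mu_{i_1 l}$, and it multiplies the preexisting factor $\mu_{lj}$. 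The crucial observation is the identity
\[ \mu_{i_1 l}\cdot \mu_{lj} = \mu_{i_1 j}, \]
which in finite type follows immediately from the well-definedness of $\mu_{ab}$ recalled just above the lemma: concatenating any path $i_1 \rightsquigarrow l$ with any path $l \rightsquigarrow j$ in the (tree-shaped) Dynkin diagram yields a path $i_1 \rightsquigarrow j$, and the defining relations $\mu_{ab}\mu_{ba} = 1$ guarantee the result is independent of these choices. Collecting like terms, the coefficient of each $\alpha_m$ in $T_{i_1} T_{i_2} \cdots T_{i_k} \alpha_j$ takes the desired form $\mu_{mj}\cdot(\text{$\mu$-free})$, closing the induction.

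To deduce the lemma I would pair the resulting expansion with $\varpi_i^\vee$. Since $(-,-)_\Gamma$ is $\Z[\Gamma]$-linear in the second slot and the dual basis property gives $(\varpi_i^\vee, \alpha_l)_\Gamma = \delta_{il}$, one finds
\[ (\varpi_i^\vee,\, T_{i_1}\cdots T_{i_k}\, \alpha_j)_\Gamma = \mu_{ij}\, a_i. \]
As $a_i \in \Z[q^{\pm 1}, t^{\pm 1}]$ is $\mmu$-free, setting $\mmu = 1$ yields $[(\varpi_i^\vee, T_{i_1}\cdots T_{i_k}\alpha_j)_\Gamma]_{\mmu=1} = a_i$, and multiplying back by $\mu_{ij}$ recovers the claimed equality. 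The only substantive point is the tree-multiplicativity $\mu_{i_1 l}\mu_{lj} = \mu_{i_1 j}$, which relies essentially on the finite-type hypothesis; everything else is bookkeeping with \eqref{eq:Troot}, so I do not anticipate a serious obstacle.
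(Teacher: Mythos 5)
Your proof is correct and is essentially the paper's own argument spelled out in full: the paper's proof simply notes that $C_{ij}(q,t,\mmu)=\mu_{ij}C_{ij}(q,t)$ in finite type and says the claim "follows from \eqref{eq:Troot}," which implicitly means exactly your induction using the multiplicativity $\mu_{i_1 l}\mu_{lj}=\mu_{i_1 j}$. No meaningful difference in approach.
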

\begin{proof}
By definition, we have $C_{ij}(q,t,\mmu) = \mu_{ij}C_{ij}(q,t)$ for any $i,j \in I$.
Then the assertion follows from \eqref{eq:Troot}.
\end{proof}

Let $w_0 \in W$ be the longest element.
It induces an involution $i \mapsto i^*$ of $I$ by $w_0 \alpha_i = - \alpha_{i^*}$. 
We consider the $\Q(\Gamma)$-linear automorphism $\nu$ of $\sQ_\Gamma$ given by $\nu(\alpha_i) = \mu_{i^*i} \alpha_{i^*}$ for each $i \in I$.
It is easy to see that $\nu$ is involutive and the pairing $(-,-)_\Gamma$ is invariant under $\nu$. 
In particular, we have $\nu (\varpi_i^\vee) = \mu_{ii^*}\varpi_{i^*}^\vee$ for each $i \in I$.
Denote the Coxeter and dual Coxeter numbers associated with $C$ by $h$ and $h^\vee$ respectively.

\begin{Prop} \label{Prop:longest}
Assume that $C$ is of finite type. 
We have $T_{w_0} = - q^{-rh^\vee}t^h \nu.$
\end{Prop}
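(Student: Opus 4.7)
The plan is to first strip off the mass parameters using Lemma~\ref{Lem:fin}, reducing the problem to the specialization $\mmu = 1$, and then to prove the specialized identity by a direct computation based on a bipartite reduced expression of $w_0$. For the first step, expand $T_{w_0}\alpha_j = \sum_{i \in I}\lambda_{ij}(q,t,\mmu)\,\alpha_i$. Applying Lemma~\ref{Lem:fin} to $w = w_0$ with any fixed reduced expression of $w_0$, each coefficient factorizes as $\lambda_{ij}(q,t,\mmu) = \mu_{ij}\,\lambda_{ij}(q,t)$, where $\lambda_{ij}(q,t) = \lambda_{ij}(q,t,\mmu)|_{\mmu = 1}$. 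Since $\nu(\alpha_j) = \mu_{j^*j}\,\alpha_{j^*}$ by definition, the target identity $T_{w_0} = -q^{-rh^\vee}t^h \nu$ is equivalent, coefficient by coefficient, to
\[
T_{w_0}\alpha_j = -q^{-rh^\vee}t^h\,\alpha_{j^*} \qquad (j \in I)
\]
under the specialization $\mu_{ij}^{(g)} = 1$. Hence it suffices to prove this specialized identity.

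For the second step, since the Dynkin diagram of $C$ is a tree it admits a $2$-coloring $I = I_+ \sqcup I_-$ in which vertices of the same color are non-adjacent. By Proposition~\ref{Prop:braidrel}, the operators $T_i$ with $i$ ranging over a single color class commute (as $c_{ij} = 0$ for such pairs), so the partial products $T_\pm \seq \prod_{i \in I_\pm} T_i$ are unambiguously defined and $T_{+-} \seq T_+ T_-$ is the deformation of the bipartite Coxeter element $s_+ s_-$. Using~\eqref{eq:Troot} and the commutativity within each color class, a direct computation gives
\[
T_\pm \alpha_j = \begin{cases} -q^{-2 d_j}t^2\,\alpha_j & \text{if } j \in I_\pm, \\ \alpha_j + \sum_{i \in I_\pm,\, i \sim j} q^{-d_i}t\cdot g_{ij}[f_{ij}]_{q^{d_i}}\,\alpha_i & \text{if } j \in I_\mp. \end{cases}
\]
A classical fact on finite Coxeter groups gives reduced expressions $w_0 = (s_+ s_-)^{h/2}$ when $h$ is even and $w_0 = (s_+ s_-)^{(h-1)/2}s_+$ when $h$ is odd (in the latter case $|I|$ is necessarily even, as odd $h$ occurs among the finite types only for $A_{2k}$); correspondingly $T_{w_0}$ equals $T_{+-}^{h/2}$ or $T_{+-}^{(h-1)/2}T_+$. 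Iterating $T_{+-}$ and tracking the interplay between the ``diagonal'' scalings $-q^{-2 d_i}t^2$ and the ``off-diagonal'' walks along edges of the Dynkin diagram leads, after the required cancellations, to the desired formula $T_{w_0}\alpha_j = -q^{-rh^\vee}t^h\,\alpha_{j^*}$.

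The technical heart of the argument is the bookkeeping for $T_{+-}^{h/2}$. The cancellations rely on the symmetry identity $[d_i]_q[f_{ij}]_{q^{d_i}} = [d_i d_j/d_{ij}]_q = [d_j]_q[f_{ji}]_{q^{d_j}}$ (for $i \sim j$) noted after~\eqref{eq:defC}, together with the Coxeter-theoretic fact that the exponents $m_1,\ldots,m_n$ of $W$ sum, with the appropriate weighting by the symmetrizer, to give $rh^\vee$ on the $q$-side and $h$ on the $t$-side. A more conceptual alternative is to diagonalize $T_{+-}$ over a finite extension of $\Q(\Gamma)$: its eigenvalues should be $(q,t)$-deformations of the classical Coxeter eigenvalues $e^{2\pi i m_k/h}$, and raising to the $h/2$-th power should collapse them all to the single scalar $-q^{-rh^\vee}t^h$. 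A third route, available once the categorical interpretation of \S\ref{Sec:gpa} is in place, is to interpret $T_{w_0}$ as the shadow of a Serre-type autoequivalence on the category of $\Gamma$-graded modules over the generalized preprojective algebra $\Pi$; the scalar $-q^{-rh^\vee}t^h\,\nu$ then emerges as the $\Gamma$-grading shift of that functor on indecomposable projectives.
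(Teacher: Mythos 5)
The first half of your argument --- expanding $T_{w_0}\alpha_j$ in the basis $\{\alpha_i\}$, applying Lemma~\ref{Lem:fin} to factor each coefficient as $\mu_{ij}$ times its $\mmu=1$ specialization, and observing that this reduces the identity $T_{w_0}=-q^{-rh^\vee}t^h\nu$ to its specialization at $\mmu=1$ --- is exactly the paper's proof. The paper then finishes by \emph{citing} the $\mmu=1$ case as \cite[Theorem~1.6]{FM}; it does not reprove it.

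The gap is in your second step, where you attempt to establish the $\mmu=1$ identity from scratch. Your formula for $T_\pm\alpha_j$ is correct, and the reduced expressions $w_0=(s_+s_-)^{h/2}$ (resp.\ $(s_+s_-)^{(h-1)/2}s_+$) are standard, but the actual verification that iterating $T_{+-}$ produces $-q^{-rh^\vee}t^h\alpha_{j^*}$ is precisely the nontrivial content of the claim, and you do not carry it out: ``tracking the interplay \dots leads, after the required cancellations, to the desired formula'' is an assertion, not a proof. Neither the claimed $q$-weighted exponent identity nor the proposed diagonalization of $T_{+-}$ (whose eigenvalues you only say ``should be'' deformations of $e^{2\pi i m_k/h}$) is established, and the third ``categorical'' route presupposes machinery from \S\ref{Sec:gpa} that in the paper is developed \emph{after} and partly \emph{using} this proposition's context. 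As written, the proposal proves only the reduction to $\mmu=1$; to be complete it must either perform the bipartite computation in full (including the case analysis on $h$ and the identification of the resulting scalar with $-q^{-rh^\vee}t^h$ and of the resulting permutation with $j\mapsto j^*$) or, as the paper does, invoke the known result \cite[Theorem~1.6]{FM}.
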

\begin{proof}
We know that the assertion holds when $\mmu = 1$ \cite[Theorem~1.6]{FM}. 
It lifts to the desired formula thanks to Lemma~\ref{Lem:fin}.
\end{proof}

\subsection{Combinatorial inversion formulas}
\label{Ssec:cinv}
Let $C$ be a GCM of general type.

Let $(i_k)_{k \in \Z_{>0}}$ and $(j_k)_{k \in \Z_{>0}}$ be two sequences in $I$.
We say that $(i_k)_{k \in \Z_{>0}}$ is 
\emph{commutation-equivalent} to $(j_k)_{k \in \Z_{>0}}$ if there is a bijection $\sigma \colon \Z_{> 0} \to \Z_{>0}$ such that $i_{\sigma(k)} = j_k$ for all $k \in \Z_{>0}$ and we have $c_{i_k, i_l} =0$ whenever $k < l$ and $\sigma(k)> \sigma(l)$.
\begin{Thm} \label{Thm:inv1}
Let $(i_k)_{k \in \Z_{>0}}$ be a sequence in $I$ satisfying the following condition:
\begin{enumerate}
\item if $C$ is of finite type, $(i_k)_{k \in \Z_{>0}}$ is commutation-equivalent to another sequence $(j_k)_{k \in \Z_{>0}}$ such that the subsequence $(j_1, \ldots, j_l)$ is a reduced word with $l$ being the length of the longest element $w_0 \in W$ and we have $j_{k+l} = j_{k}^*$ for all $k \in \Z_{>0}$;
\item if $C$ is of infinite type, the subsequence $(i_1, i_2, \ldots, i_k)$ is a reduced word for all $k \in \Z_{>0}$, and we have $|\{ k \in \Z_{>0} \mid i_k = i\}| = \infty$ for each $i \in I$.
\end{enumerate}
Then, for any $i, j \in I$, we have
\begin{equation} \label{eq:inv1}
\tC_{ij}(q,t, \mmu) = q^{-d_j}t\sum_{k \in \Z_{>0}; i_k = j}(\varpi_i^\vee, T_{i_1} \cdots T
_{i_{k-1}}\alpha_j)_\Gamma.
\end{equation}  
\end{Thm}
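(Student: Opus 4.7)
The strategy is to reduce \eqref{eq:inv1} to the stronger vector identity
\begin{equation*}
\varpi_j = q^{-d_j}t \sum_{k:\, i_k = j} \xi_k, \qquad \xi_k \seq T_{i_1}\cdots T_{i_{k-1}}\alpha_{i_k}, \tag{$\ast$}
\end{equation*}
understood in a suitable $t$-adic completion of $\sQ_\Gamma$. Indeed, $(\varpi_i^\vee,\alpha_j)_\Gamma = \delta_{ij}$ combined with \eqref{eq:ao} gives $(\varpi_i^\vee, \varpi_j)_\Gamma = \tC_{ij}(q,t,\mmu)$, so pairing ($\ast$) on the left with $\varpi_i^\vee$ produces exactly \eqref{eq:inv1}. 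The backbone computation is the telescoping identity
\begin{equation*}
\alpha_j - T_{\le N}\alpha_j = \sum_{k=1}^N q^{-d_{i_k}}t\, C_{i_k, j}(q,t,\mmu)\, \xi_k, \qquad T_{\le N} \seq T_{i_1}\cdots T_{i_N}, \tag{$\dagger$}
\end{equation*}
proved by induction on $N$ from the defining relation $T_{i_k}(x) - x = -(\alpha_{i_k}^\vee, x)_\Gamma\alpha_{i_k}$. Since each $T_{i_k}$ is $\Q(\Gamma)$-linear, applying $T_{\le N}$ to $\varpi_j = \sum_l \tC_{lj}\alpha_l$, multiplying each $\alpha_l$-version of ($\dagger$) by $\tC_{lj}$ and summing using $\sum_l C_{i_k, l}\tC_{lj} = \delta_{i_k, j}$, one obtains the dual telescope
\begin{equation*}
\varpi_j - T_{\le N}\varpi_j = q^{-d_j}t \sum_{1 \le k \le N:\, i_k = j}\xi_k. \tag{$\ddagger$}
\end{equation*}

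In the infinite-type case, I would argue that $T_{\le N}\varpi_j \to 0$ in the $t$-adic topology as $N \to \infty$, whence letting $N \to \infty$ in ($\ddagger$) yields ($\ast$). Establishing this convergence is the principal obstacle I anticipate: it uses both hypotheses, namely that every $i \in I$ appears infinitely often in $(i_k)$ and that every initial subword is reduced, and is most naturally proved through the categorical realization of the following section, where each $\xi_k$ is interpreted (up to an Euler--Poincar\'e pairing) as a ``layer'' of the reduced word realized by an explicit $\Gamma$-graded preprojective module whose lowest $t$-degree grows unboundedly with $k$.

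In the finite-type case, a direct verification shows that each individual summand of \eqref{eq:inv1} is invariant under a single commutation swap of adjacent indices $i_k, i_{k+1}$ with $c_{i_k, i_{k+1}} = 0$: the swap makes $T_{i_k}, T_{i_{k+1}}$ commute, and $C_{i_k, i_{k+1}}(q,t,\mmu) = 0$ forces $T_{i_{k+1}}\alpha_{i_k} = \alpha_{i_k}$ and $T_{i_k}\alpha_{i_{k+1}} = \alpha_{i_{k+1}}$. We may therefore assume $(i_k) = (j_k)$ with $(j_1, \ldots, j_l)$ a reduced expression of $w_0$ and $j_{k+l} = j_k^*$. Since $w_0 s_i w_0 = s_{i^*}$, the shifted word $(j_1^*, \ldots, j_l^*)$ is again a reduced expression for $w_0$, so Proposition~\ref{Prop:longest} combined with $\nu^2 = \id$ yields $T_{\le 2l} = T_{w_0}^2 = \lambda\cdot\id$ with $\lambda \seq q^{-2rh^\vee}t^{2h}$, and consequently the periodicity $\xi_{k+2l} = \lambda\,\xi_k$. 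Applying ($\ddagger$) with $N = 2l$ gives $(1-\lambda)\varpi_j = q^{-d_j}t\sum_{1 \le k \le 2l:\, j_k = j}\xi_k$, and dividing by the $t$-adic unit $1-\lambda$ (its $t$-order is $2h > 0$), expanding $(1-\lambda)^{-1} = \sum_{m \ge 0}\lambda^m$ and substituting $\lambda^m\xi_{k'} = \xi_{k'+2lm}$, reassembles the right-hand side into the full sum ($\ast$) over all $k \ge 1$ with $j_k = j$.
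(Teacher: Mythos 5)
Your reductions are sound: the telescoping identities $(\dagger)$ and $(\ddagger)$ are correct elementary consequences of \eqref{eq:Troot} and \eqref{eq:ao}, and pairing $(\ast)$ with $\varpi_i^\vee$ does recover \eqref{eq:inv1}. The problem is that in the infinite-type case you have left the crux unproven. The telescope $(\ddagger)$ holds for an \emph{arbitrary} sequence $(i_k)$, so the entire content of the two hypotheses (every initial subword reduced, every index occurring infinitely often) must be absorbed into the single convergence claim $T_{i_1}\cdots T_{i_N}\varpi_j \to 0$ in the $t$-adic topology, and your write-up offers no argument for it beyond saying it is ``most naturally proved through the categorical realization.'' That is exactly what the paper does, and it is not a formality: the proof introduces the filtration $F_k = J_{i_1}\cdots J_{i_k}$ of $\Pi$, identifies the subquotients $F_{k-1}/F_k \simeq J_{i_1}\cdots J_{i_{k-1}}\otimes_\Pi E_{i_k}$ together with the $\gtor_1$-vanishing needed for Corollary~\ref{Cor:braid:ac} (this is where reducedness enters, via \cite{M2}), and proves exhaustiveness $\bigcap_k F_k = 0$ using the condition that every $i$ appears infinitely often; under $\chi_\ell$, the class $[F_N e_j]$ is (up to normalization) $T_{i_1}\cdots T_{i_N}\varpi_j^\vee$, and exhaustiveness is precisely your missing limit. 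So for infinite type your proposal is a correct reformulation of what must be shown rather than a proof, and I see no way to close it without essentially reproducing the module-theoretic input.

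Your finite-type argument, by contrast, is complete and takes a genuinely different route from the paper. The paper disposes of finite type in two lines by reducing to the $\mmu=1$ statement of \cite{FM} via Lemma~\ref{Lem:fin}; you instead deduce from Proposition~\ref{Prop:longest} and $\nu^2 = \id$ the periodicity $T_{i_1}\cdots T_{i_{2l}} = q^{-2rh^\vee}t^{2h}\,\id$, apply $(\ddagger)$ with $N = 2l$, and resum the geometric series. Your commutation-swap verification and the identity $\xi_{k+2l} = \lambda\xi_k$ (using that $(j_1^*,\ldots,j_l^*)$ is again reduced for $w_0$ and $j_{m+2l}=j_m$) both check out. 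What this buys is a self-contained derivation (modulo Proposition~\ref{Prop:longest}, which the paper itself obtains by lifting from \cite{FM}) that makes the convergence of the infinite sum transparent, since the $t$-order of $\lambda$ is $2h>0$; what it costs is nothing, and it is arguably the cleaner argument for this half of the theorem.
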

\begin{proof}[Proof of Theorem~\ref{Thm:inv1} for finite type]
Note that the RHS of \eqref{eq:inv1} is unchanged if we replace the sequence $(i_1,i_2,\ldots)$ with another  commutation-equivalent sequence thanks to Proposition~\ref{Prop:braidrel}.  
When $C$ is of finite type, we know that the equality \eqref{eq:inv1} holds at $\mmu=1$ by \cite[Proposition 3.16]{FM}.
Since we have $\tC_{ij}(q,t,\mmu) = \mu_{ij}\tC_{ij}(q,t)$ for any $i,j \in I$, we can deduce  \eqref{eq:inv1} for general $\mmu$ thanks to Lemma~\ref{Lem:fin}.
\end{proof}

A proof when $C$ is of infinite type will be given in \S\ref{sec:braid} below (after Corollary~\ref{Cor:braid:ac}).

In the remaining part of this section, we discuss the special case of the above inversion formula~\eqref{eq:inv1} when the sequence comes from a Coxeter element and deduce a recursive algorithm to compute $\tC(q,t,\mmu)$. 
Fix an acyclic orientation
$\Omega$ of $C$. 
We say that a total ordering $I =\{i_1, \ldots, i_n\}$ is \emph{compatible with $\Omega$} if $(i_k, i_l) \in \Omega$ implies $k < l$. 
Taking a compatible total ordering, we define the Coxeter element $\tau_\Omega \seq s_{i_1}\cdots s_{i_n}$.
The assignment $\Omega \mapsto \tau_\Omega$ gives a well-defined bijection between the set of acyclic orientations of $C$ and the set of Coxeter elements of $W$.
In what follows, we abbreviate $T_\Omega \seq T_{\tau_\Omega}$.
Letting $I = \{i_1,\ldots,i_n \}$ be a total ordering compatible with $\Omega$, for each $i \in I$, we set
\begin{equation} \label{eq:hgamma} 
\beta^\Omega_i \seq (1-T_\Omega)\varpi_i = q^{-d_i}tT_{i_1} T_{i_2} \cdots T_{i_{k-1}}\alpha_{i_k} \qquad \text{if $i = i_k$}.
\end{equation} 
Note that the resulting element $\beta^\Omega_i$ is independent of the choice of the compatible ordering.
\begin{Prop} \label{Prop:inv2} 
Let $\Omega$ be an acyclic orientation of $C$.
For any $i,j \in I$, we have
\begin{equation} \label{eq:inv2}
\tC_{ij}(q,t,\mmu) = \sum_{k=0}^\infty (\varpi_i^\vee, T^{k}_\Omega \beta^\Omega_j)_\Gamma.
\end{equation}
\end{Prop}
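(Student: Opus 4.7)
The plan is to recognize the right-hand side of~\eqref{eq:inv2} as an instance of the inversion formula~\eqref{eq:inv1} from Theorem~\ref{Thm:inv1}, applied to a particular infinite periodic sequence built from $\tau_\Omega$.

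Concretely, I would fix a total ordering $I = \{i_1, \ldots, i_n\}$ compatible with $\Omega$ and let $\underline{i} = (i_k)_{k \in \Z_{>0}}$ be its $n$-periodic extension, i.e., $i_{k+n} = i_k$ for all $k \ge 1$. For $j = i_k$ with $1 \le k \le n$, the definition~\eqref{eq:hgamma} of $\beta_j^\Omega$ together with $T_\Omega = T_{i_1}\cdots T_{i_n}$ gives
\[
T_\Omega^m \beta_j^\Omega \;=\; q^{-d_j}\,t\,(T_{i_1}\cdots T_{i_n})^m\, T_{i_1}\cdots T_{i_{k-1}}\alpha_{i_k} \;=\; q^{-d_j}\,t\,T_{i_1}T_{i_2}\cdots T_{i_{mn+k-1}}\alpha_{i_{mn+k}}
\]
for every $m \ge 0$. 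Summing over $m$ and using the identification $\{mn+k : m \ge 0\} = \{K \in \Z_{>0} : i_K = j\}$, the right-hand side of~\eqref{eq:inv2} coincides exactly with the right-hand side of~\eqref{eq:inv1} for the sequence $\underline{i}$.

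Thus the proof reduces to verifying that $\underline{i}$ satisfies the hypotheses of Theorem~\ref{Thm:inv1}. For $C$ of infinite type, each letter of $I$ manifestly appears infinitely often in $\underline{i}$, and every finite prefix of $\underline{i}$ is a reduced word by the classical theorem that a power of an acyclic Coxeter element in an infinite Coxeter group, written as concatenated copies of a compatible reduced expression, is itself a reduced expression, together with the trivial fact that an initial segment of a reduced word is reduced. For $C$ of finite type, I would invoke the classical fact that, using only commutations between pairs of non-adjacent letters, $\underline{i}$ can be rearranged into a sequence $(j_k)$ whose first $\ell(w_0)$ terms form a reduced expression for $w_0$ and whose remaining terms satisfy $j_{k+\ell(w_0)} = j_k^*$. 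This is the combinatorial shadow of the Auslander--Reiten theory of the Dynkin quiver with orientation $\Omega$, where the positive roots are enumerated by $\tau_\Omega$-orbits with heights $h_i$ summing to $\ell(w_0)$.

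The main obstacle is the finite-type commutation-equivalence check: although the underlying structural properties of $\Omega$-adapted reduced expressions of $w_0$ are well known, aligning the $n$-periodic word $\underline{i}$ with the specific normal form prescribed in Theorem~\ref{Thm:inv1}(1) requires careful bookkeeping with the heights $h_i$ and the $\ast$-involution on $I$. Once both the finite- and infinite-type hypotheses are verified, Theorem~\ref{Thm:inv1} applied to $\underline{i}$ delivers the identity~\eqref{eq:inv2}.
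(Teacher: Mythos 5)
Your proposal is correct and follows essentially the same route as the paper: extend a compatible ordering to an $n$-periodic sequence, identify the right-hand side of~\eqref{eq:inv2} with that of~\eqref{eq:inv1} via the telescoping identity $T_\Omega^m\beta_j^\Omega = q^{-d_j}tT_{i_1}\cdots T_{i_{mn+k-1}}\alpha_{i_{mn+k}}$, and verify the hypotheses of Theorem~\ref{Thm:inv1} using Speyer's theorem on powers of Coxeter elements in infinite type and the B\'edard-type commutation-equivalence (with the direct observation $\tau_\Omega^{h/2}=w_0$ in the non-simply-laced case) in finite type. The paper's proof cites exactly these same facts, so no further comparison is needed.
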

\begin{proof}
Choose a total ordering $I = \{i_1, \ldots, i_n\}$ compatible with $\Omega$. Then we have $T_\Omega = T_{i_1} \cdots T_{i_n}$. 
We extend the sequence $(i_1, \ldots, i_n)$ to an infinite sequence $(i_k)_{k \in \Z_{>0}}$ so that $i_{k+n} = i_k$ for all $k \in \N$.
When $C$ is of infinite type, this sequence satisfies the condition in Theorem~\ref{Thm:inv1} by~\cite{Speyer}, and hence we obtain \eqref{eq:inv2}. 
When $C$ is of finite type, we know that the subsequence $(i_1,\ldots, i_{2l}) = (i_1, \ldots, i_n)^{h}$ is commutation-equivalent to a sequence $(j_1, \ldots, j_{2l})$ such that $(j_1,\ldots,j_l)$ is a reduced word (adapted to $\Omega$) for the longest element $w_0$ and $j_{k+l} = j_k^*$ for all $1 \le k \le l$.  
Indeed, when $C$ is of simply-laced type, it follows from~\cite{Bed}. When $C$ is of non-simply-laced type, we simply have $\tau_\Omega^{h/2} = w_0$ and $(i_1, \ldots, i_n)^{h/2}$ is a reduced word for $w_0$. 
Therefore Theorem~\ref{Thm:inv1} again yields \eqref{eq:inv2}.
\end{proof}

\begin{Lem} \label{Lem:mesh1}
For each $i \in I$ and $k \in \N$, we have
\begin{equation} \label{eq:rec1}
q^{d_i} t^{-1} T^{k+1}_\Omega \beta^\Omega_i + q^{-d_i}t T^{k}_\Omega \beta^\Omega_i + \sum_{j \sim i}C_{ji}(q,t,\mmu) T_\Omega^{k+\delta((j,i)\in \Omega)}\beta^\Omega_j =0.
\end{equation} 
\end{Lem}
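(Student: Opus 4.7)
The plan is to first reduce to the case $k=0$: since $T_\Omega$ is $\Q(\Gamma)$-linear and every coefficient appearing in the identity lies in $\Z[\Gamma] \subset \Q(\Gamma)$, the operator $T_\Omega^k$ commutes with the entire left-hand side, and invertibility of $T_\Omega^k$ allows us to pull it off. Hence it suffices to prove
\[ q^{d_i} t^{-1} T_\Omega \beta^\Omega_i + q^{-d_i} t\, \beta^\Omega_i + \sum_{j \sim i} C_{ji}(q,t,\mmu)\, T_\Omega^{\delta((j,i) \in \Omega)} \beta^\Omega_j = 0. \]

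Next, I would exploit the identity $\beta^\Omega_m = (1 - T_\Omega) \varpi_m$ from \eqref{eq:hgamma}. Since $(1 - T_\Omega)$ commutes with every power of $T_\Omega$, the left-hand side above factors as $(1 - T_\Omega) E$, where
\[ E \seq q^{d_i} t^{-1} T_\Omega \varpi_i + q^{-d_i} t\, \varpi_i + \sum_{j \sim i} C_{ji}(q,t,\mmu)\, T_\Omega^{\delta((j,i) \in \Omega)} \varpi_j. \]
Rather than proving only $(1 - T_\Omega) E = 0$, I would verify the stronger assertion $E=0$. Rearranging \eqref{eq:hgamma} gives the uniform formula $T_\Omega \varpi_m = \varpi_m - \beta^\Omega_m$ for every $m \in I$. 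Substituting this, recombining the $\varpi$-coefficients via $\alpha_i = \sum_{j \in I} C_{ji}(q,t,\mmu) \varpi_j$ from \eqref{eq:ao}, and using the explicit form $q^{d_i} t^{-1} \beta^\Omega_i = T_{i_1} \cdots T_{i_{k_0-1}} \alpha_i$ (where $i = i_{k_0}$ in a total ordering compatible with $\Omega$), after all cancellations one arrives at
\[ E = \alpha_i - T_{i_1} \cdots T_{i_{k_0-1}} \alpha_i - \sum_{j \sim i,\, (j,i) \in \Omega} C_{ji}(q,t,\mmu)\, \beta^\Omega_j. \]

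The remaining identity $\alpha_i - T_{i_1} \cdots T_{i_{k_0-1}} \alpha_i = \sum_{j \sim i,\, (j,i) \in \Omega} C_{ji}(q,t,\mmu)\, \beta^\Omega_j$ then follows by telescoping: writing
\[ T_{i_1} \cdots T_{i_{k_0-1}} \alpha_i - \alpha_i = \sum_{l=1}^{k_0-1} T_{i_1} \cdots T_{i_{l-1}} (T_{i_l} - 1) \alpha_i, \]
formula \eqref{eq:Troot} gives $(T_{i_l} - 1) \alpha_i = -q^{-d_{i_l}} t\, C_{i_l, i}(q,t,\mmu)\, \alpha_{i_l}$, which vanishes unless $i_l \sim i$. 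In the nonvanishing case the compatibility of the ordering with $\Omega$ forces $(i_l, i) \in \Omega$, and applying $T_{i_1} \cdots T_{i_{l-1}}$ produces precisely $-C_{i_l, i}(q,t,\mmu)\, \beta^\Omega_{i_l}$ by \eqref{eq:hgamma}. Summing over $l$ yields the desired identity, so that $E=0$ and the lemma follows. The only real obstacle is notational: one must match the indicator $\delta((j,i) \in \Omega)$ with the position of $j$ in the compatible ordering, but this translation is immediate from the definition of compatibility, and the argument is mechanical once it is in place.
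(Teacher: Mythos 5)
Your argument is correct. Every step checks out: the reduction to $k=0$ is legitimate because $T_\Omega$ is a $\Q(\Gamma)$-linear automorphism and all coefficients are scalars; the factorization of the left-hand side as $(1-T_\Omega)E$ is immediate from $\beta^\Omega_m=(1-T_\Omega)\varpi_m$; your formula $E=\alpha_i-T_{i_1}\cdots T_{i_{k_0-1}}\alpha_i-\sum_{j\sim i,\,(j,i)\in\Omega}C_{ji}(q,t,\mmu)\beta^\Omega_j$ is what one gets after substituting $T_\Omega\varpi_m=\varpi_m-\beta^\Omega_m$ and using \eqref{eq:ao}; and the telescoping via \eqref{eq:Troot} does produce exactly $-\sum_{j\sim i,\,(j,i)\in\Omega}C_{ji}(q,t,\mmu)\beta^\Omega_j$, since for $l<k_0$ the term survives only when $i_l\sim i$ (note $i_l\neq i$ because the ordering enumerates $I$ without repetition), in which case compatibility forces $(i_l,i)\in\Omega$ and \eqref{eq:hgamma} converts $q^{-d_{i_l}}t\,T_{i_1}\cdots T_{i_{l-1}}\alpha_{i_l}$ into $\beta^\Omega_{i_l}$.

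The overall strategy coincides with the paper's --- both reduce the lemma to the single identity $E=0$ on the level of the $\varpi$'s and then apply $T_\Omega^{k}(1-T_\Omega)$ --- but the verification of $E=0$ is done differently. The paper computes the action of $T_i$ on the dual basis directly: from $T_ix=x-(\alpha_i^\vee,x)_\Gamma\alpha_i$ and $(\alpha_i^\vee,\varpi_j)_\Gamma=q^{-d_i}t\,\delta_{i,j}$ one gets $T_i\varpi_j=\varpi_j$ for $j\neq i$ and $T_i\varpi_i=-q^{-2d_i}t^2\varpi_i-q^{-d_i}t\sum_{i'\sim i}C_{i'i}(q,t,\mmu)\varpi_{i'}$, after which $E=0$ falls out in one line by pushing $T_{i_1}\cdots T_{i_{k_0-1}}$ through and matching $T_\Omega^{\delta((j,i)\in\Omega)}$. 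You instead avoid ever computing $T_i\varpi_j$ and work on the simple-root side, re-deriving the same cancellation by telescoping. Both are valid; the paper's route is the more economical one since the key input (the $T_i$-action on the $\varpi$-basis) is essentially a one-line consequence of duality, whereas your telescoping reconstructs that information term by term. Your version has the mild advantage of only invoking formulas the reader has already seen displayed (\eqref{eq:Troot}, \eqref{eq:hgamma}, \eqref{eq:ao}), at the cost of more bookkeeping with the indicator $\delta((j,i)\in\Omega)$.
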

\begin{proof}
For any $i, j \in I$, we have 
\[ T_i \varpi_j = \begin{cases}
-q^{-2d_i}t^{2}\varpi_i - q^{-d_i}t\sum_{i' \sim i}C_{i'i}(q,t,\mmu)\varpi_{i'} & \text{if $i=j$}, \\
\varpi_j & \text{if $i\neq j$}
\end{cases}
\] 
by definition. Using this identity, we obtain
\[ q^{d_i}t T_\Omega \varpi_i =- q^{-d_i}t\varpi_i -\sum_{j \sim i} C_{ji}(q,t,\mmu) T_\Omega^{\delta((j,i)\in \Omega)}\varpi_j.\]
Applying $T^{k}_\Omega(1-T_\Omega)$ yields \eqref{eq:rec1}.
\end{proof}

Once we fix a total ordering $I= \{i_1,\ldots,i_n\}$ compatible with $\Omega$, the equalities \eqref{eq:hgamma} and \eqref{eq:rec1} compute the elements $T^{k}_\Omega \beta^\Omega_i$ for all $(k,i) \in \Z_{\ge 0} \times I$ recursively along the lexicographic total ordering of $\Z_{\ge 0} \times I$. 
Thus, together with \eqref{eq:inv2}, we have obtained a recursive algorithm to compute $\tC_{ij}(q,t, \mmu)$. 

We say that a GCM $C$ is \emph{bipartite} if there is a function $\epsilon \colon I \to \Z / 2\Z$ such that $\epsilon(i) = \epsilon(j)$ implies $i \not\sim j$. 
When $C$ is bipartite, we can simplify the above recursive formula by separating the parameter $t$ as explained below.

For each $i \in I$, we consider a $\Q(\Gamma)$-linear automorphism $\bar{T}_i$ of $\sQ_\Gamma$ obtained from $T_i$ by evaluating the parameter $t$ at $1$.
More precisely, it is given by
\[
\bar{T}_i \alpha_j = \alpha_j - q^{-d_i}C_{ij}(q,1,\mmu) \alpha_i
\]
for all $j \in I$.
The operators $\{\bar{T}_i\}_{i \in I}$ define another action of the braid group, under which the $\Q(\Gamma_0)$-subspace $\sQ_{\Gamma_0} \seq \bigoplus_{i \in I}\Q(\Gamma_0)\alpha_i$ of $\sQ_\Gamma$ is stable.
\begin{Def}
A function $\xi \colon I \to \Z$ is called \emph{a height function} (for $C$) if 
\[ |\xi(i) - \xi(j)| = 1 \quad \text{for all $i, j \in I$ with $i \sim j$}. \] 
A height function $\xi$ gives an acyclic orientation $\Omega_\xi$ of $C$ such that we have $(i, j) \in \Omega_\xi$ if $i \sim j$ and $\xi(j) = \xi(i) + 1$.
When $i \in I$ is a sink of $\Omega_\xi$, in other words, when $\xi(i) < \xi(j)$ holds for all $j \in I$ with $j \sim i$, we define another height function $s_i \xi$ by 
\[ (s_i \xi)(j) \seq \xi(j)+2\delta_{i,j}.\]
\end{Def}

\begin{Rem} There exists a height function for $C$ if and only if $C$ is bipartite.
\end{Rem}

Given a function $\xi \colon I \to \Z$, we define a linear automorphism $t^\xi$ of $\sQ_{\Gamma}$ by 
\[ t^\xi \alpha_i \seq t^{\xi(i)}\alpha_i \]
for each $i \in I$. 
When $\xi\colon I \to \Z$ is a height function and $i \in I$ a sink of $\Omega_\xi$, a straightforward computation yields
$t^{\xi} T_i = \bar{T}_i t^{s_i \xi}$, from which we deduce 
\begin{equation} \label{eq:hT&T} t^{\xi} T_{\Omega_\xi} = \bar{T}_{\Omega_\xi} t^{\xi + 2}.
\end{equation}

\begin{Def}
Let $\xi \colon I \to \Z$ be a height function.
Define a map $\Phi_{\xi} \colon I \times \Z \to \sQ_{\Gamma_0}$ by
\[ 
\Phi_\xi(i,u) \seq \begin{cases}
\bar{T}^{k}_{\Omega_\xi}(1-\bar{T}_{\Omega_\xi})\varpi_i & \text{if $u = \xi(i) + 2k$ for some $k \in \Z_{\ge 0}$,} \\
0 & \text{else}.
\end{cases}
\]
\end{Def}
The next proposition is a consequence of Proposition~\ref{Prop:inv2} and \eqref{eq:hT&T}.
\begin{Prop} \label{Prop:inv3}
Let $\xi \colon I \to \Z$ be a height function.
For any $i,j \in I$, we have
\begin{equation} \label{eq:inv3}
\tC_{ij}(q,t,\mmu) = \sum_{u = \xi(j)}^\infty\left(\varpi_i^\vee, \Phi_\xi(j,u)\right)_{\Gamma}t^{u-\xi(i)+1}. 
\end{equation}
\end{Prop}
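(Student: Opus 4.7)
The plan is to deduce the formula from Proposition~\ref{Prop:inv2} applied to $\Omega = \Omega_\xi$ by rewriting each summand using the intertwining relation~\eqref{eq:hT&T}. The idea is that each term $(\varpi_i^\vee, T_\Omega^k \beta_j^\Omega)_\Gamma$ will factorize as a $t$-independent coefficient times a monomial $t^{\xi(j)+2k-\xi(i)+1}$.

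First I would fix a total ordering $I = \{i_1, \ldots, i_n\}$ compatible with $\Omega_\xi$ and iterate the local relation $t^\xi T_{i_a} = \bar T_{i_a} t^{s_{i_a}\xi}$ along this sequence, obtaining the global relation $T_\Omega^k = t^{-\xi}\bar T_\Omega^k t^{\xi+2k}$. Applying the iterated identity directly to $\alpha_{i_k}$ and using that the letters of the Coxeter expression $\tau_{\Omega_\xi} = s_{i_1}\cdots s_{i_n}$ are pairwise distinct, so that $(s_{i_{k-1}}\cdots s_{i_1}\xi)(i_k) = \xi(i_k)$, yields the refined identity
\[ T_{i_1}\cdots T_{i_{k-1}}\alpha_{i_k} = t^{\xi(i_k)}\, t^{-\xi}\, \bar T_{i_1}\cdots \bar T_{i_{k-1}}\alpha_{i_k}. \]
Combined with the explicit form of $\beta_j^\Omega$ in~\eqref{eq:hgamma}, this gives
\[ \beta_j^\Omega = t^{1+\xi(j)}\, t^{-\xi}\, \bar\beta_j, \qquad \bar\beta_j \seq (1-\bar T_\Omega)\varpi_j = \Phi_\xi(j, \xi(j)) \in \sQ_{\Gamma_0}. \]

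Next I would apply $T_\Omega^k$ to this factorization; since the composition $t^{\xi+2k}t^{-\xi}$ acts on any element of $\sQ_{\Gamma_0}$ as the central scalar $t^{2k}$, one obtains
\[ T_\Omega^k \beta_j^\Omega = t^{1+\xi(j)+2k}\, t^{-\xi}\, \bar T_\Omega^k \bar\beta_j = t^{1+\xi(j)+2k}\, t^{-\xi}\, \Phi_\xi(j, \xi(j)+2k). \]
Pairing with $\varpi_i^\vee$ and using the elementary identity $(\varpi_i^\vee, t^{-\xi} v)_\Gamma = t^{-\xi(i)}(\varpi_i^\vee, v)_\Gamma$ for $v \in \sQ_{\Gamma_0}$ (which follows at once from $(\varpi_i^\vee, \alpha_l)_\Gamma = \delta_{il}$) gives
\[ (\varpi_i^\vee, T_\Omega^k \beta_j^\Omega)_\Gamma = t^{\xi(j)+2k-\xi(i)+1}\, (\varpi_i^\vee, \Phi_\xi(j, \xi(j)+2k))_\Gamma. \]
Summing over $k \geq 0$ and reparametrizing by $u = \xi(j)+2k$ (noting $\Phi_\xi(j,u) = 0$ for $u$ not of this form) recovers~\eqref{eq:inv3}.

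The main technical point is the refined identity in the first step: one must track carefully how the height function $\xi$ is updated when $t^\xi$ is commuted across each $T_{i_a}$ in turn. This works out precisely because each $i_a$ is a sink of the current reorientation at the moment it is processed (by compatibility of the ordering with $\Omega_\xi$), and because the letters of a Coxeter expression are pairwise distinct, so the cumulative shifts in the height function cancel out at the position $i_k$. Once that identity is in place, the rest of the argument is a straightforward bookkeeping of powers of $t$.
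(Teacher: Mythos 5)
Your argument is correct and follows exactly the route the paper intends: it deduces \eqref{eq:inv3} from Proposition~\ref{Prop:inv2} by commuting $t^{\xi}$ past the $T_{i_a}$'s via \eqref{eq:hT&T}, which is all the paper's (one-line) proof says. Your careful bookkeeping — the refined identity $T_{i_1}\cdots T_{i_{k-1}}\alpha_{i_k} = t^{\xi(i_k)}t^{-\xi}\bar{T}_{i_1}\cdots \bar{T}_{i_{k-1}}\alpha_{i_k}$, the factorization $T_\Omega^k = t^{-\xi}\bar{T}_\Omega^k t^{\xi+2k}$, and the evaluation $(\varpi_i^\vee, t^{-\xi}v)_\Gamma = t^{-\xi(i)}(\varpi_i^\vee,v)_\Gamma$ — is a correct and complete filling-in of the details left implicit there.
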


Now, Lemma~\ref{Lem:mesh1} specializes to the following.

\begin{Lem} \label{Lem:mesh2}
Let $\xi \colon I \to \Z$ be a height function.
For any $(i,u) \in I \times \Z$ with $u > \xi(i)$, we have
\begin{equation} \label{eq:Phi}
q^{-d_i} \Phi_\xi(i,u-1)+q^{d_i}\Phi_\xi(i,u+1) + \sum_{j \sim i} C_{ji}(q,1,\mmu) \Phi_\xi(j,u) =0. 
\end{equation}
In particular, \eqref{eq:Phi} enables us to compute recursively all the $\Phi_\xi(i,u)$ starting from 
\[\Phi_{\xi}(i, \xi(i)) = (1-\bar{T}_{\Omega_\xi})\varpi_i= q^{-d_i}\bar{T}_{i_1}\cdots \bar{T}_{i_{k-1}}\alpha_i \quad \text{for all $i \in I$},\]
where $I = \{i_1,\ldots,i_n\}$ is a total ordering compatible with $\Omega_\xi$ and $i_k=i$. 
\end{Lem}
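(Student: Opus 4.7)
The plan is to derive \eqref{eq:Phi} as the $t=1$ specialization of the recursion \eqref{eq:rec1} of Lemma~\ref{Lem:mesh1}, applied with the acyclic orientation $\Omega = \Omega_\xi$, and then to translate the resulting identity into the language of $\Phi_\xi$ using the combinatorics of the height function $\xi$.

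Concretely, I would specialize \eqref{eq:rec1} at $t=1$: this converts each $T_j$ to $\bar{T}_j$, replaces $C_{ji}(q,t,\mmu)$ by $C_{ji}(q,1,\mmu)$, and transforms $\beta_i^{\Omega_\xi} = (1-T_{\Omega_\xi})\varpi_i$ into $(1-\bar{T}_{\Omega_\xi})\varpi_i$ via the $t=1$ analogue of \eqref{eq:hgamma}. The resulting identity reads
\begin{equation*}
q^{d_i}\bar{T}_{\Omega_\xi}^{k+1}(1-\bar{T}_{\Omega_\xi})\varpi_i + q^{-d_i}\bar{T}_{\Omega_\xi}^{k}(1-\bar{T}_{\Omega_\xi})\varpi_i + \sum_{j\sim i}C_{ji}(q,1,\mmu)\,\bar{T}_{\Omega_\xi}^{k+\delta((j,i)\in\Omega_\xi)}(1-\bar{T}_{\Omega_\xi})\varpi_j = 0,
\end{equation*}
which, by the definition of $\Phi_\xi$, is exactly
\begin{equation*}
q^{d_i}\Phi_\xi(i,\xi(i)+2k+2) + q^{-d_i}\Phi_\xi(i,\xi(i)+2k) + \sum_{j\sim i}C_{ji}(q,1,\mmu)\,\Phi_\xi\!\bigl(j,\,\xi(j)+2(k+\delta((j,i)\in\Omega_\xi))\bigr) = 0.
\end{equation*}

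The key combinatorial ingredient is the identity $\xi(j)+2\delta((j,i)\in\Omega_\xi) = \xi(i)+1$ whenever $j\sim i$: since $\xi$ is a height function, $\xi(j) = \xi(i)\pm 1$, and $(j,i)\in\Omega_\xi$ corresponds precisely to $\xi(j) = \xi(i)-1$. Substituting this and setting $u \seq \xi(i)+2k+1$ turns the above equation into \eqref{eq:Phi}. This handles all $u > \xi(i)$ with $u-\xi(i)$ odd. In the remaining case $u-\xi(i)$ even, \eqref{eq:Phi} is tautologically $0=0$, since for $j\sim i$ the parity of $u-\xi(j)$ is opposite to that of $u-\xi(i)$, so every $\Phi_\xi$-term vanishes by definition. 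The ``in particular'' assertion follows immediately: solving \eqref{eq:Phi} for $\Phi_\xi(i,\xi(i)+2k+2)$, together with the base case $\Phi_\xi(i,\xi(i)) = (1-\bar{T}_{\Omega_\xi})\varpi_i$ provided by the $t=1$ specialization of \eqref{eq:hgamma}, gives a well-defined recursion.

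The main obstacle is a semantic one: the element $\varpi_i$ has $t$-dependent coordinates in the basis $\{\alpha_j\}$, whereas $\bar{T}_{\Omega_\xi}$ is strictly a $t=1$ operator and $\Phi_\xi$ is required to take values in $\sQ_{\Gamma_0}$. This is resolved by the $t=1$ analogue of \eqref{eq:hgamma}: the identity $(1-\bar{T}_{\Omega_\xi})\varpi_i = q^{-d_i}\bar{T}_{i_1}\cdots\bar{T}_{i_{k-1}}\alpha_i$ (for $I = \{i_1,\ldots,i_n\}$ a compatible ordering with $i_k = i$) shows that the $t$-denominators appearing in $\varpi_i$ are annihilated by $1-\bar{T}_{\Omega_\xi}$. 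Hence every iterated image $\bar{T}_{\Omega_\xi}^k(1-\bar{T}_{\Omega_\xi})\varpi_i$ lies in $\sQ_{\Gamma_0}$, and the $t=1$ specialization of \eqref{eq:rec1} makes literal sense on this locus. Once this subtlety is disposed of, the remainder of the argument is pure bookkeeping of parities and heights.
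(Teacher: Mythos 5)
Your proof takes essentially the same route as the paper, which proves this lemma in one line as a specialization of Lemma~\ref{Lem:mesh1}; the paper's mechanism is the conjugation relation \eqref{eq:hT&T} (the same one used to pass from Proposition~\ref{Prop:inv2} to Proposition~\ref{Prop:inv3}), and your $t=1$ evaluation together with the identity $\xi(j)+2\delta((j,i)\in\Omega_\xi)=\xi(i)+1$ and the parity bookkeeping is an equivalent packaging of exactly that computation.

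One step of your justification is, however, not literally correct, and it is the step you single out as resolving the ``semantic obstacle''. The identity $(1-\bar T_{\Omega_\xi})\varpi_i = q^{-d_i}\bar T_{i_1}\cdots \bar T_{i_{k-1}}\alpha_i$, read with $\varpi_i$ the genuine $t$-dependent element of $\sQ_\Gamma$, is false: unlike $T_j$, the operator $\bar T_j$ does \emph{not} fix $\varpi_i$ for $j\neq i$, because $\bar T_j$ subtracts the functional $x\mapsto q^{-d_j}\sum_l C_{jl}(q,1,\mmu)x_l$ rather than $(\alpha_j^\vee,-)_\Gamma$, and these differ on the diagonal entry. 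Already in type $\mathrm{A}_2$ with the ordering $1<2$ one computes that the $\alpha_1$-coefficient of $(1-\bar T_1\bar T_2)\varpi_1$ is $(qt^{-1}+q^{-1}t+q^{-3})/(q^2t^{-2}+1+q^{-2}t^2)$, which is neither $q^{-1}$ nor $t$-free. This abuse is inherited from the statement of the lemma (and from the definition of $\Phi_\xi$) itself, so it is not your error alone; but the clean fix is the opposite of what you wrote: one should \emph{define} $\Phi_\xi(i,\xi(i)+2m)$ to be $q^{-d_i}\bar T^m_{\Omega_\xi}\bar T_{i_1}\cdots\bar T_{i_{k-1}}\alpha_i$, equivalently $\mathrm{ev}_{t=1}\bigl(T^m_{\Omega_\xi}\beta^{\Omega_\xi}_i\bigr)$, equivalently $t^{-\xi(i)-2m-1}\,t^\xi T^m_{\Omega_\xi}\beta^{\Omega_\xi}_i$ via \eqref{eq:hT&T}. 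With that reading your argument goes through verbatim: every $T^m_{\Omega_\xi}\beta^{\Omega_\xi}_i$ has coordinates in $\Z[\Gamma_0][t^{\pm1}]$ in the basis $\{\alpha_j\}$ (each $q^{-d_j}tC_{jl}(q,t,\mmu)$ is a Laurent polynomial in $t$), so $\mathrm{ev}_{t=1}$ is well defined on this lattice and intertwines $T_j$ with $\bar T_j$; applying it to \eqref{eq:rec1} and using that $C_{ji}(q,t,\mmu)=C_{ji}(q,1,\mmu)$ for $j\neq i$ yields \eqref{eq:Phi}, with your height/parity analysis supplying the identification of indices and the vanishing in the even case.
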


Thus, Proposition~\ref{Prop:inv3} combined with Lemma~\ref{Lem:mesh2} gives a simpler recursive algorithm to compute $\tC_{ij}(q,t,\mmu)$ when $C$ is bipartite. 

\begin{Rem}
When $C$ is of finite type, the formula~\eqref{eq:inv3} recovers the formulas in \cite[Proposition 2.1]{HL15} (type ADE) and \cite[Theorem 4.7]{KO} (type BCFG) after the specialization $\Gamma_0 \to \{1\}$.
\end{Rem}

\begin{Rem}
When $C$ is of finite type, the above algorithm can be used to compute $\tC(q,t,\mmu)$ (or $\tC(q,t)$) completely.
For example, let us give an explicit formula of $\tC(q,t)$ for type $\mathrm{F}_4$. 
We use the convention $I = \{1,2,3,4\}$ with $1 \sim 2 \sim 3 \sim 4$ and $(d_1,d_2,d_3,d_4) = (2,2,1,1)$.
Then, for any $i \le j$, we have \[\tC_{ij}(q,t) = \frac{f_{ij}(q,t)+f_{ij}(q^{-1},t^{-1})}{q^9t^{-6}+q^{-9}t^6}\]
where $f_{ij} = f_{ij}(q,t)$ is given by 
\begin{align*}
f_{11} &=  q^7t^{-5}+qt^{-1}, & f_{12} &= q^5t^{-4}+q^3t^{-2}+q, \allowdisplaybreaks \\
f_{13} &= q^4t^{-3}+q^2t^{-1}, &
f_{14} &= q^3t^{-2}, \allowdisplaybreaks \\
f_{22} &= q^7t^{-5}+(q^5+q^3)t^{-3}+(q^3+2q)t^{-1}, &
f_{23} &= q^6t^{-4}+(q^4+q^2)t^{-2}+1, \allowdisplaybreaks \\
f_{24} &= q^5t^{-3}+qt^{-1}, &
f_{33} &= q^8t^{-5}+(q^6+q^4)t^{-3}+(2q+1)t^{-1}, \allowdisplaybreaks \\
f_{34} &= q^7t^{-4}+q^3t^{-2}+q, &
f_{44} &= q^8t^{-5}+q^2t^{-1}.  
\end{align*}
For the other case $i > j$, we can use the relation $[d_i]_q \tC_{ij}(q,t) = [d_j]_q \tC_{ji}(q,t)$.

When $C$ is of type $\mathrm{ABCD}$, an explicit formula of $\tC(q,t)$ is given in \cite[Appendix C]{FR98}.
When $C$ is of type $\mathrm{ADE}$, we have $\tC(q,t) = \tC(qt^{-1},1)$ and an explicit formula of $\tC(q) = \tC(q, 1)$ is given in \cite[Appendix A]{GTL} (see also \cite[\S\S 4.4.1,4.4.2]{KO}).
\end{Rem}

\section{Generalized preprojective algebras}
\label{Sec:gpa}

Throughout this section, we fix an arbitrary field $\kk$. 
Unless specified otherwise, vector spaces and algebras are defined over $\kk$, and modules are left modules.

\subsection{Conventions}
Let $Q$ be a finite quiver.
We understand it as a quadruple $Q=(Q_0, Q_1, \se, \tl)$, where $Q_0$ is the set of vertices, $Q_1$ is the set of arrows and $\se$ (resp.~$\tl$) is the map $Q_1 \to Q_0$ which assigns each arrow with its source (resp.~target).
For a quiver $Q$, we set $\kk Q_0 \seq \bigoplus_{i \in Q_0} \kk e_i$ and $\kk Q_1 \seq \bigoplus_{\alpha \in Q_1} \kk \alpha$.  
We endow $\kk Q_0$ with a $\kk$-algebra structure by $e_i \cdot e_j = \delta_{ij} e_i$ for any $i,j \in Q_0$, and $\kk Q_1$ with a $(\kk Q_0, \kk Q_0)$-bimodule structure by $e_i \cdot \alpha = \delta_{i, \tl(\alpha)} \alpha$ and $\alpha \cdot e_i = \delta_{i, \se(\alpha)} \alpha$ for any $i \in Q_0$ and $\alpha \in Q_1$. 
Then the path algebra of $Q$ is defined to be the tensor algebra $\kk Q \seq T_{\kk Q_0}(\kk Q_1)$. 

Let $G$ be a multiplicative abelian group with unit $1$. 
By a $G$-graded quiver, we mean a quiver $Q$ equipped with a map $\deg \colon Q_1 \to G$. 
We regard its path algebra $\kk Q$ as a $G$-graded algebra in the natural way.
 
We say that a $G$-graded vector space $V = \bigoplus_{g \in G}V_g$ is locally finite if $V_g$ is of finite dimension for all $g \in G$.
In this case, we define its graded dimension $\dim_G V$ to be the formal sum $\sum_{g \in G}\dim_\kk (V_g) g \in \Z[\![G]\!]$.
For a $G$-graded vector space $V$ and an element $x \in G$, we define the grading shift $xV = \bigoplus_{g \in G}(xV)_{g}$ by $(xV)_g = V_{x^{-1}g}$.
More generally, for $a = \sum_{g \in G} a_g g \in \Z_{\ge 0}[\![G]\!]$, we set $V^{\oplus a} \seq \bigoplus_{g \in G} (gV)^{\oplus a_g}$. 
When $V^{\oplus a}$ happens to be locally finite, we have $\dim_G V^{\oplus a} = a \dim_G V$. 
%The graded dual of $V$ is  the $G$-graded $\kk$-vector space $\bD(V)$ defined by $\bD(V)_g = \Hom_{\kk}(V_{g^{-1}}, \kk)$. 

\subsection{Preliminary on positively graded algebras} \label{Ssec:pga}
Let $t^{\Z}$ denote a free abelian group generated by a non-trivial element $t$.
In what follows, we consider the case when $G$ is a direct product $G = G_0 \times t^{\Z}$, where $G_0$ is another abelian group.
Our principal example is the group $\Gamma = t^\Z \times \Gamma_0$ in \S\ref{Ssec:DCM}.
For $G$-graded vector space $V = \bigoplus_{g \in G} V_g$ and $n \in \Z$, we define the $G_0$-graded subspace $V_n \subset V$ by $V_n \seq \bigoplus_{g \in G_0}V_{t^n g}$.
By definition, we have $V = \bigoplus_{n \in \Z}V_n$.
We use the notation $V_{\ge n} \seq \bigoplus_{m \ge n} V_m$ and $V_{>n} \seq \bigoplus_{m > n}V_m$. 

We consider a $G$-graded algebra $\Lambda$ satisfying the following condition:
\begin{itemize}
\item[(A)] $\Lambda = \Lambda_{\ge 0}$ and $\dim_\kk \Lambda_n < \infty$ for each $n \in \Z_{\ge 0}$.
\end{itemize}
In particular, $\Lambda_0$ is a $G_0$-graded finite dimensional algebra.
Let $\{ S_j\}_{j \in J}$ be a complete collection of $G_0$-graded simple modules of $\Lambda_0$ up to isomorphism and grading shift. 
It also gives a complete collection of $G$-graded simple modules of $\Lambda$.
For a $G$-graded $\Lambda$-module $M$, the subspace $M_{\ge n} \subset M$ is a $\Lambda$-submodule for each $n \in \Z$.
Let $\Lambda \umod_G^{\ge n}$ denote the category of $G$-graded $\Lambda$-modules $M$ satisfying $M = M_{\ge n}$ and $\dim_\kk M_m < \infty$ for all $m \ge n$, whose morphisms are $G$-homogeneous $\Lambda$-homomorphisms.
This is a $\kk$-linear abelian category.
Let $\Lambda \umod_G^+ \seq \bigcup_{n \in \Z} \Lambda \umod_G^{\ge n}$.  
Note that $\Lambda \umod_G^{+}$ contains all the finitely generated $G$-graded $\Lambda$-modules, because it contains their projective covers by the condition (A).

\begin{Lem} \label{Lem:PM}
Given $n \in \Z$ and $M \in \Lambda \umod_G^{\ge n}$, there is a surjection $P \twoheadrightarrow M$ from a projective $\Lambda$-module $P$ belonging to $\Lambda \umod_G^{\ge n}$ .
\end{Lem}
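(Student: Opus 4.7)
The plan is to construct $P$ as a direct sum of shifted copies of $\Lambda$ indexed by a homogeneous $\kk$-basis of the Nakayama-type quotient $\bar{M} \seq M/(\Lambda_{>0} M)$, then use positivity of the $t$-grading to verify both surjectivity (via an induction on the $t$-degree) and local finiteness (via a direct count).

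First, observe that $\Lambda_{>0}$ is a two-sided graded ideal of $\Lambda$, so $\bar{M}$ inherits a $G$-grading; since $M \in \Lambda\umod_G^{\ge n}$, one has $\bar{M} = \bar{M}_{\ge n}$ and $\dim_{\kk} \bar{M}_m \le \dim_{\kk} M_m < \infty$ for all $m \ge n$. Pick a homogeneous $\kk$-basis $\{\bar{m}_\alpha\}_{\alpha \in A}$ of $\bar{M}$ and lift each $\bar{m}_\alpha$ to a homogeneous element $m_\alpha \in M_{g_\alpha}$; write $n_\alpha \ge n$ for the $t$-component of $g_\alpha$. Set
\[ P \seq \bigoplus_{\alpha \in A} g_\alpha \Lambda, \]
which is free, hence projective, as a $G$-graded $\Lambda$-module, and let $\pi\colon P \to M$ be the homogeneous morphism sending $1 \in (g_\alpha\Lambda)_{g_\alpha}$ to $m_\alpha$ for each $\alpha$.

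Surjectivity of $\pi$ follows by induction on $m \ge n$, aiming at $M_m \subseteq \pi(P)$. Positivity of the $t$-grading on $\Lambda$ gives the containment $(\Lambda_{>0} M)_m \subseteq \sum_{k<m} \Lambda \cdot M_k$, so the base case $m=n$ reduces to $M_n = \bar{M}_n$, which is spanned by the lifts $\{m_\alpha \mid n_\alpha = n\}$; for the inductive step, the lifts $\{m_\alpha \mid n_\alpha = m\}$ span $M_m$ modulo $(\Lambda_{>0} M)_m$, while the inductive hypothesis forces $(\Lambda_{>0} M)_m \subseteq \Lambda \cdot \pi(P) \subseteq \pi(P)$. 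Finally, $P \in \Lambda\umod_G^{\ge n}$ because each summand $g_\alpha \Lambda$ has $t$-support in $[n_\alpha,\infty) \subseteq [n,\infty)$, and
\[ \dim_{\kk} P_m = \sum_{k=n}^{m} (\dim_{\kk} \bar{M}_k)(\dim_{\kk} \Lambda_{m-k}) < \infty \]
by hypothesis (A).

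The main delicate point is guaranteeing local finiteness of $P$: a crude choice of generators — for instance a $\kk$-basis of all of $M$ — would handle surjectivity but could produce infinitely many generators in some $t$-degree, so one is forced to cut down using the quotient $\bar{M}$. It is precisely the positivity of the $t$-grading (both $\Lambda = \Lambda_{\ge 0}$ and $M = M_{\ge n}$) that makes the Nakayama-type lifting well-founded degree by degree, and that ensures each homogeneous piece $P_m$ receives contributions only from the finitely many summands with $n_\alpha \le m$.
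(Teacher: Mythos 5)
Your proof is correct. It differs from the paper's in the construction of $P$: the paper takes, for each $m \ge n$, a projective cover $P_m \twoheadrightarrow M_m$ of $M_m$ as a $G_0$-graded $\Lambda_0$-module and sets $P \seq \Lambda \otimes_{\Lambda_0} \bigoplus_{m \ge n} t^m P_m$, for which surjectivity onto $M$ is immediate (the generators hit all of each $M_m$) and local finiteness follows from the bound $\dim_G P \le \dim_G \Lambda \cdot \sum_{m\ge n} t^m \dim_{G_0}P_m$. You instead take a free module on a homogeneous basis of the smaller space $\bar M = M/\Lambda_{>0}M$, which forces you to prove surjectivity by induction on the $t$-degree, using $(\Lambda_{>0}M)_m \subseteq \sum_{k<m}\Lambda M_k$; in exchange you get a more economical generating set and an entirely explicit free cover, with no reference to projective covers over $\Lambda_0$. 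Both arguments rest on the same two pillars — positivity of the $t$-grading and the finiteness in condition (A) — and both yield the local finiteness count correctly, so either version serves the purpose of the lemma equally well.
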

\begin{proof}
%In the proof, we regard $\Lambda_0$ as a $G$-graded algebra via the inclusion $G_0 \subset G$.
For each $m \ge n$, let $P_m \twoheadrightarrow M_m$ be a projective cover of $M_m$ regarded as a $G_0$-graded $\Lambda_0$-module.
Then consider the $G$-graded projective $\Lambda$-module 
$P \seq \Lambda \otimes_{\Lambda_0}\bigoplus_{m \ge n} t^mP_m$, which carries a natural surjection $P \twoheadrightarrow M$.
This $P$ belongs to $\Lambda \umod_G^{\ge n}$ because $\dim_G P$ is not greater than $\dim_{G} \Lambda \cdot \sum_{m \ge n}t^m \dim_{G_0} P_m$ which belongs to $\Z[G_0][\![t]\!] t^n$.
\end{proof}

For an abelian category $\mathcal{C}$, we denote by $K(\mathcal{C})$ its Grothendieck group.
We regard $K(\Lambda \umod_G^{\ge n})$ as a subgroup of $K(\Lambda \umod_G^+)$ via the inclusion for any $n \in \Z$.
Then, the collection of subgroups $\{ K(\Lambda \umod_G^{\ge n})\}_{n \in \Z}$ gives a  filtration of $K(\Lambda \umod_G^+)$.
We define the completed Grothendieck group $\hK(\Lambda \umod_G^+)$ to be the projective limit
\[ \hK(\Lambda \umod_G^+) \seq \varprojlim_{n} K(\Lambda \umod_G^{+})/K(\Lambda \umod_G^{\ge n}). \] 
Note that $\hK(\Lambda \umod_G^+)$ carries a natural $\Z[G_0](\!( t )\!)$-module structure given by $a [M] = [M^{\oplus a_+}] - [M^{\oplus a_-}]$, where we choose $a_+, a_- \in \Z_{\ge 0}[G_0](\!(t)\!)$ so that $a = a_+ - a_-$.
%The following lemma is immediate from the construction.
\begin{Lem} \label{Lem:Sbasis}
The $\Z[G_0](\!(t)\!)$-module $\hK(\Lambda \umod_G^+)$ is free with a basis $\{[S_j]\}_{j \in J}$.
\end{Lem}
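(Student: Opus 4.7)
The plan is to analyze the filtration quotients $A_n \seq K(\Lambda \umod_G^+)/K(\Lambda \umod_G^{\ge n})$ one at a time, identify each with an explicit free $\Z[G_0]$-module whose $t$-range is bounded above, and then pass to the projective limit.

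First I would introduce, for each $n \in \Z$, the full subcategory $\mathcal{C}_n$ of $\Lambda \umod_G^+$ consisting of finite-dimensional $G$-graded modules concentrated in $t$-degrees strictly less than $n$. Since $\Lambda_0$ is $G_0$-graded and finite-dimensional, the index set $J$ is finite, and $\mathcal{C}_n$ is an abelian length category whose simple objects (up to isomorphism) are the grading shifts $\{g t^m S_j \mid g \in G_0,\ m < n,\ j \in J\}$. Hence $K(\mathcal{C}_n)$ is the free $\Z[G_0]$-module on the basis $\{t^m [S_j] \mid m < n,\ j \in J\}$.

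Next I would construct a $\Z[G_0]$-linear isomorphism $\psi_n \colon A_n \xrightarrow{\sim} K(\mathcal{C}_n)$ by $[M] \mapsto [M/M_{\ge n}]$. Well-definedness on $K(\Lambda \umod_G^+)$ uses the observation that any short exact sequence $0 \to M' \to M \to M'' \to 0$ yields an exact sequence $0 \to M'_{\ge n} \to M_{\ge n} \to M''_{\ge n} \to 0$ (surjectivity follows by a lifting along the $t$-grading), and hence $0 \to M'/M'_{\ge n} \to M/M_{\ge n} \to M''/M''_{\ge n} \to 0$ is exact in $\mathcal{C}_n$. The map $\psi_n$ vanishes on $K(\Lambda \umod_G^{\ge n})$ and is bijective because the full embedding $\mathcal{C}_n \hookrightarrow \Lambda \umod_G^+$ induces a mutual inverse modulo $K(\Lambda \umod_G^{\ge n})$ via the sequence $0 \to M_{\ge n} \to M \to M/M_{\ge n} \to 0$.

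Finally I would identify the transition maps $A_{n+1} \to A_n$ under $\psi_\bullet$. For $N \in \mathcal{C}_{n+1}$ the image is $[N/N_{\ge n}]$, which on the basis simply annihilates the $t^n$-coefficient. An element of $\hK(\Lambda \umod_G^+)$ thus becomes a compatible system $(x_n)_n$ with $x_n = \sum_{j,\, m < n} c_{j,m}\, t^m [S_j]$, where the $c_{j,m} \in \Z[G_0]$ are independent of $n > m$. Since each $x_n$ is a finite sum and $J$ is finite, for each fixed $j$ the set $\{m : c_{j,m} \neq 0\}$ must be bounded below, so $\sum_m c_{j,m} t^m \in \Z[G_0](\!(t)\!)$. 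This produces the desired isomorphism $\hK(\Lambda \umod_G^+) \cong \bigoplus_{j \in J} \Z[G_0](\!(t)\!)[S_j]$. The main obstacle is the construction of $\psi_n$: the exactness of $0 \to M'_{\ge n} \to M_{\ge n} \to M''_{\ge n} \to 0$ is the only step that uses the $t$-grading in a non-formal way, and one must also check that the embedding-induced map $K(\mathcal{C}_n) \to A_n$ serves as a genuine two-sided inverse; once these are in hand, the remaining identifications and the passage to the limit are purely formal.
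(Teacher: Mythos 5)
Your proof is correct and follows essentially the same route as the paper: both arguments slice a module along the $t$-grading into finite-length $G_0$-graded $\Lambda_0$-modules and read off graded Jordan--H\"older multiplicities, so that $[M]=\sum_{j}\bigl(\sum_{m}[M_m:S_j]_{G_0}t^m\bigr)[S_j]$ in the completion. The only difference is one of detail: you make explicit the identification of each filtration quotient $K(\Lambda \umod_G^+)/K(\Lambda \umod_G^{\ge n})$ with the Grothendieck group of the truncated category via the exactness of $M\mapsto M/M_{\ge n}$, which is precisely what the paper's unproved ``unique expression'' (i.e.\ freeness) claim rests on.
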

\begin{proof}
For any $n \in \Z$ and $M \in \Lambda\umod^{\ge n}_G$, we have a unique expression 
\[[M] = \sum_{j\in J}\left(\sum_{m \geq n}[M_m:S_j]_{G_0}t^m\right)[S_j]\] 
in $\hK(\Lambda \umod_G^+)$, where $[M_m:S_j]_{G_0} \in \Z[G_0]$ denotes the $G_0$-graded Jordan-H\"older multiplicity of $S_j$ in the finite length $G_0$-graded $\Lambda_0$-module $M_m$.
This proves the assertion.
\end{proof}

\subsection{Generalized preprojective algebras}
\label{Ssec:GPA}

We fix a GCM $C = (c_{ij})_{i,j \in I}$ and its symmetrizer $D = \diag(d_i \mid i \in I)$ as in \S\ref{Ssec:notation}.
Recall the free abelian group $\Gamma$ in \S\ref{Ssec:DCM}.
We consider the quiver $\tQ = (\tQ_0, \tQ_1, \se, \tl)$  given as follows: 
\begin{gather*}
\tQ_0 = I, \quad
\tQ_1 = \{ \alpha_{ij}^{(g)} \mid (i,j, g) \in I \times I \times \Z, i \sim j, 1 \le g \le g_{ij}\} \cup \{ \ep_i \mid i \in I \}, \\
\se(\alpha_{ij}^{(g)}) = j, \quad \tl(\alpha_{ij}^{(g)})= i, \quad \se(\ep_i)=\tl(\ep_i)=i. 
\end{gather*}
We equip the quiver $\tQ$ with a $\Gamma$-grading by 
\begin{equation} \label{eq:deg}
\deg (\alpha_{ij}^{(g)}) = q^{-d_if_{ij}}t\mu^{(g)}_{ij}, \qquad \deg(\ep_i) = q^{2d_i}.
\end{equation} 
Let $\Omega$ be an acyclic orientation of $C$.
We define the associated potential $W_\Omega \in \kk \tQ$ by 
\begin{equation} \label{eq:potential} 
W_\Omega = \sum_{i, j \in I; i \sim j} 
\sum_{g=1}^{g_{ij}} \mathrm{sgn}_\Omega(i,j)\alpha_{ij}^{(g)} \alpha_{ji}^{(g)}\ep_{i}^{f_{ij}}, \end{equation}
where $\mathrm{sgn}_\Omega(i,j) \seq(-1)^{\delta((j,i) \in \Omega)}$. 
Note that $W_\Omega$ is homogeneous of degree $t^2$. 
We define the $\Gamma$-graded $\kk$-algebra $\tPi$ to be the quotient of $\kk\tQ$ by the ideal generated by all the cyclic derivations of $W_\Omega$.
In other words, the algebra $\tPi$ is the quotient of $\kk \tQ$ by the following two kinds of relations:
\begin{itemize} 
\item[(R1)] $\ep_i^{f_{ij}} \alpha_{ij}^{(g)} = \alpha_{ij}^{(g)} \ep_j^{f_{ji}}$ for any $i,j \in I$ with $i \sim j$ and $1 \le g \le g_{ij}$;
\item[(R2)] $\displaystyle \sum_{j \in I: j\sim i}\sum_{g=1}^{g_{ij}}\sum_{k = 0}^{f_{ij}-1}\mathrm{sgn}_\Omega(i,j)  \ep_i^k \alpha_{ij}^{(g)} \alpha_{ji}^{(g)} \ep_i^{f_{ij}-1-k} =0$ for each $i \in I$.
\end{itemize} 
\begin{Rem} \label{Rem:ori}
Although the definition of the algebra $\tPi$ depends on the choice of acyclic orientation $\Omega$, it is irrelevant. 
In fact, a different choice of $\Omega$ gives rise to an isomorphic $\Gamma$-graded algebra.  
Moreover, one may define $\tPi$ with more general orientation (i.e., without acyclic condition, as in \S\ref{Ssec:KP} below).
Even if we do so, the resulting $\Gamma$-graded algebra is isomorphic to our $\tPi$.    
\end{Rem}
For a positive integer $\ell \in \Z_{>0}$, we define the $\Gamma$-graded algebra $\Pi(\ell)$ to be the quotient
\[ \Pi(\ell) = \tPi/ (\ep^\ell), \]
where $\ep \seq \sum_{i \in I} \ep_i^{r/d_i}$. 
Note that $\ep$ is homogeneous and central in $\tPi$.
In other words, it is the quotient of $\kk \tQ$ by the three kinds of relations: (R1), (R2), and
\begin{itemize}
\item[(R3)] $\ep_i^{r\ell / d_i} = 0$ for each $i \in I$.
\end{itemize}  
\begin{Rem}\label{Rem:GLS}
    The algebra $\Pi(\ell)$ is identical to the generalized preprojective algebra $\Pi({}^\mathtt{t}C, \ell rD^{-1}, \Omega)$ in the sense of \cite{GLS}. 
\end{Rem}
\begin{Lem} \label{Lem:tpos}
For any $\ell \in \Z_{>0}$, the algebra $\Pi(\ell)$ satisfies the condition {\rm(A)} in {\rm \S\ref{Ssec:pga}}.  
\end{Lem}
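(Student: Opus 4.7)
The plan is to verify the two parts of condition (A) in sequence: the non-negativity of the $t$-grading, then the finite-dimensionality of each piece $\Pi(\ell)_n$. For the first part, I read off from~\eqref{eq:deg} that the $t$-degrees of the generators are $0$ for the idempotents $e_i$ and the loops $\ep_i$, and $1$ for each arrow $\alpha_{ij}^{(g)}$; the defining relations (R1), (R2), (R3) are each $t$-homogeneous (of $t$-degrees $1$, $2$, and $0$ respectively, noting that each summand of (R2) contains two $\alpha$-factors). Hence $\Pi(\ell)$ inherits a $t$-grading concentrated in non-negative degrees, i.e. $\Pi(\ell) = \Pi(\ell)_{\ge 0}$.

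For the second part, fix $n \in \Z_{\ge 0}$. The subspace $\Pi(\ell)_n$ is spanned by classes of paths using exactly $n$ of the $\alpha$-arrows, necessarily of the form
\[
p = \ep_{v_0}^{a_0}\, \alpha_1\, \ep_{v_1}^{a_1}\, \alpha_2\, \cdots\, \alpha_n\, \ep_{v_n}^{a_n},
\]
where $(v_0, \ldots, v_n)$ is a sequence in $I$, $\alpha_k = \alpha_{v_{k-1}, v_k}^{(g_k)}$ for some $g_k$, and $a_0, \ldots, a_n \in \Z_{\ge 0}$. The number of such arrow sequences is bounded by $(\#\tQ_1)^n < \infty$, so it remains only to bound the exponents $a_k$ modulo the relations. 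Rewriting (R1) as
\[
\ep_i^a\, \alpha_{ij}^{(g)} \;=\; \ep_i^{a \bmod f_{ij}}\, \alpha_{ij}^{(g)}\, \ep_j^{f_{ji}\lfloor a/f_{ij}\rfloor}
\]
and applying it in a single left-to-right sweep for $k = 1, 2, \ldots, n$ (reducing $a_{k-1}$ modulo $f_{v_{k-1}, v_k}$ while pushing the excess into $a_k$) yields a normal form in which $0 \le a_{k-1} < f_{v_{k-1}, v_k}$ for each $k = 1, \ldots, n$. Finally, relation (R3) forces $\ep_{v_n}^{r\ell/d_{v_n}} = 0$, so $a_n < r\ell/d_{v_n}$. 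Since all these bounds are finite, $\Pi(\ell)_n$ admits a finite spanning set and is therefore finite-dimensional.

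The step that deserves care is that one left-to-right pass really produces a valid normal form: reducing $a_{k-1}$ at step $k$ does not disturb the earlier reductions of $a_0, \ldots, a_{k-2}$, since those lie strictly to the left of $\alpha_k$, so the procedure terminates after exactly $n$ sub-steps. The quadratic relations (R2) are not needed for this upper bound, as passing to the further quotient by (R2) can only shrink $\Pi(\ell)_n$.
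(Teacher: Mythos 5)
Your proof is correct and follows essentially the same route as the paper: identify $\Pi(\ell)_n$ with the span of paths containing exactly $n$ of the arrows $\alpha_{ij}^{(g)}$ and bound the $\ep$-exponents to get a finite spanning set. The only difference is that the paper does not bother with the (R1)-sweep: it simply observes that (R3) already forces \emph{every} exponent to satisfy $m_k < r\ell/d_{i_k}$ (any $\ep_{i}^{m}$ with $m \ge r\ell/d_{i}$ vanishes wherever it occurs in the path), which is marginally simpler but yields the same conclusion.
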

\begin{proof}
The fact $\Pi(\ell)_{\ge 0} =\Pi(\ell)$ is clear from the definition \eqref{eq:deg}.
For any $n \in \Z_{\ge 0}$, 
thanks to the relation {\rm (R3)}, the vector space $\Pi(\ell)_n$ is spanned by a finite number of vectors in
\[ \{ \ep_{i_0}^{m_0} \alpha^{(g_1)}_{i_0,i_1} \ep_{i_1}^{m_1} \alpha^{(g_2)}_{i_1,i_2} \cdots \ep_{i_{n-1}}^{m_{n-1}}\alpha^{(g_{n})}_{i_{n-1},i_n}\ep_{i_n}^{m_n} \mid i_k \in I, 0 \le m_k < r \ell/d_{i_k}, 1 \le g_k \le g_{i_{k-1},i_k} \}.\]
Therefore, we have $\dim_\kk \Pi(\ell)_n < \infty$.
\end{proof}

In what follows, we fix $\ell \in \Z_{>0}$ and write $\Pi$ for $\Pi(\ell)$ for the sake of brevity.

By the definition, we have
\[ \Pi_0 \cong \prod_{i \in I} H_i, \qquad \text{where $H_i \seq \kk[\ep_i]/(\ep_i^{r\ell/d_i})$}. \]
In particular, for each $M \in \Pi \umod_\Gamma^+$ and $n \in \Z$, the subspace $e_i M_n$ is a finite-dimensional $H_i$-module for each $i \in I$.
We say that $M$ is locally free if $e_i M_n$ is a free $H_i$-module for any $n \in \Z$ and $i \in I$, or  equivalently $M_n$ is a projective $\Pi_0$-module for any $n \in \Z$.  
In this case, we set $\rank_{i} M \seq \dim_\Gamma e_i (M/\ep_i M) \in \Z[\Gamma_0](\!(t)\!)$.

\begin{Thm}[{\cite[\S 11]{GLS}}]
As a (left) $\Pi$-module, $\Pi$ is locally free in itself. 
\end{Thm}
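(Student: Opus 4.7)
The plan is to exhibit an explicit $H_i$-basis of $e_i \Pi$ for each $i \in I$. The first step, which is purely formal, is to use the commutation relation (R1) to produce a spanning set: by the same argument as in the proof of Lemma~\ref{Lem:tpos}, every element of $e_i \Pi$ is a $\kk$-linear combination of monomials
\[ \ep_{i_0}^{m_0} \alpha^{(g_1)}_{i_0 i_1} \ep_{i_1}^{m_1} \alpha^{(g_2)}_{i_1 i_2} \cdots \ep_{i_{n-1}}^{m_{n-1}} \alpha^{(g_n)}_{i_{n-1} i_n} \ep_{i_n}^{m_n} \]
with $i_0 = i$ and $0 \le m_k < r\ell/d_{i_k}$, where the bound on each exponent comes from (R3). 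Since $\ep_i$ is central in $e_i \Pi e_i$ (another consequence of (R1)), the left and right multiplications by $\ep_i$ on $e_i \Pi$ coincide, so these monomials span $e_i \Pi$ as an $H_i$-module. What remains is to identify a subset which is $H_i$-linearly independent.

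To accomplish this, I would pass to the associated graded algebra $\mathrm{gr}\,\Pi$ with respect to the filtration counting the total number of arrows $\alpha_{ij}^{(g)}$ in each monomial. Under this filtration, (R1) and (R3) are preserved verbatim, while the mesh relation (R2) becomes a homogeneous quadratic relation in the arrows. The algebra $\mathrm{gr}\,\Pi$ is thus the preprojective algebra of the species attached to $(C,D,\Omega)$ in the sense of Dlab-Ringel, truncated at $\ep^\ell$. For such a species preprojective algebra, local freeness can be established via an explicit length-two bimodule resolution
\[ 0 \to \bigoplus_{i \in I} \mathrm{gr}\,\Pi\, e_i \otimes_{H_i} e_i\, \mathrm{gr}\,\Pi \longrightarrow \bigoplus_{\substack{i \sim j \\ 1 \le g \le g_{ij}}} \mathrm{gr}\,\Pi\, e_i \otimes_{\kk} e_j\, \mathrm{gr}\,\Pi \longrightarrow \mathrm{gr}\,\Pi \otimes_{\Pi_0} \mathrm{gr}\,\Pi \longrightarrow \mathrm{gr}\,\Pi \to 0, \]
whose leftmost map records the content of (R2). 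Applying $-\otimes_{\Pi_0} e_i$ and unraveling the resulting exact sequence exhibits $\mathrm{gr}\,\Pi\, e_i$ as a free right $H_i$-module. Since $e_i \Pi_n$ and $e_i (\mathrm{gr}\,\Pi)_n$ have the same graded dimension by construction of the filtration, local freeness descends to $\Pi$ itself.

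The main obstacle is proving the exactness of the bimodule resolution above in the symmetrizable, non-symmetric setting. In the symmetric case $D = \mathrm{id}$ it is Crawley-Boevey's classical theorem; in general, the relation (R1) forces a nontrivial $H_i$-$H_j$-bimodule pairing between $\alpha_{ij}^{(g)}$ and $\alpha_{ji}^{(g)}$ across each edge, and the critical point is that this pairing is perfect, which makes (R2) a complete-intersection-type relation rather than merely a zero locus. This verification is the main technical content of \cite[\S 11]{GLS}, where it is carried out by exploiting the species-theoretic interpretation of (R2) together with explicit control of the first syzygies of the simple $\mathrm{gr}\,\Pi$-modules.
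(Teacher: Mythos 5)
The paper offers no proof of this statement at all: it is imported verbatim from \cite[\S 11]{GLS}, so there is no internal argument to compare against, and your proposal has to be judged on its own terms. On those terms it has several genuine problems. First, the claim that $\ep_i$ is central in $e_i\Pi e_i$ is false in general: relation (R1) only lets the \emph{power} $\ep_i^{f_{ij}}$ commute past an arrow $\alpha_{ij}^{(g)}$, and $f_{ij}>1$ occurs whenever $d_j>d_{ij}$ (already in type $\mathrm{B}_2$). This is harmless for the spanning-set step, since only the left $H_i$-action is needed there, but it signals a confusion that resurfaces later. Second, the passage to $\mathrm{gr}\,\Pi$ is vacuous --- the relations (R1), (R2), (R3) are already homogeneous in the number of arrows, so $\mathrm{gr}\,\Pi\cong\Pi$ --- and the identification with a Dlab-Ringel species preprojective algebra is incorrect: the local rings $H_i=\kk[\ep_i]/(\ep_i^{r\ell/d_i})$ are truncated polynomial rings, not skew-fields, which is exactly what separates the GLS algebras from the classical species setting of \S\ref{Ssec:species}.

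Third, and most seriously, the bimodule resolution you invoke is both misstated and logically circular. Its middle term must be $\bigoplus_{i\sim j}\Pi e_j\otimes_{H_j}{}_jH_i\otimes_{H_i}e_i\Pi$ as in \eqref{eq:bmodres}, not a tensor product over $\kk$; with $\otimes_\kk$ the graded ranks do not match and the sequence cannot be exact. The left-exactness you assert (a vanishing kernel of the first map) fails whenever $C$ is of finite type, by Theorem~\ref{Thm:Pres}. Most importantly, the exactness of this complex is \cite[Proposition 12.1]{GLS}, whose proof relies on the local freeness of $e_j\Pi e_i$ established beforehand in \cite[\S 11]{GLS}; deriving local freeness \emph{from} the resolution inverts the actual logical order and begs the question. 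Your closing paragraph in effect concedes this by deferring ``the main technical content'' back to \cite[\S 11]{GLS} --- which is exactly, and only, what the paper itself does. A non-circular argument must establish directly that each $e_j\Pi e_i$ is free over $H_i$ and over $H_j$, for instance by exhibiting explicit bases, before any of the resolution results can be used.
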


For each $i \in I$, let $P_i \seq \Pi e_i$ be the indecomposable projective $\Pi$-module associated to the vertex $i$ and $S_i$ its simple quotient.  
Consider the two-sided ideal $J_i \seq \Pi (1-e_i) \Pi$. 
We have $\Pi / J_i \cong H_i$ as $\Gamma$-graded algebras.
We write $E_i$ for $\Pi / J_i$ when we regard it as a $\Gamma$-graded left $\Pi$-module.
This is a locally free $\Pi$-module characterized by $\rank_j E_i = \delta_{i,j}$.
In $\hK(\Pi\umod_\Gamma^+)$, we have 
\begin{equation} \label{eq:E=S}
[E_i] = \frac{1-q^{2r\ell}}{1-q^{2d_i}}[S_i].
\end{equation}

There is the anti-involution $\phi \colon \Pi \rightarrow \Pi^{\op}$ given by the assignment
\[\phi(e_i) \seq e_i, \qquad \phi(\alpha_{ij}^{(g)}) \seq \alpha_{ji}^{(g)}, \qquad \phi(\ep_i) \seq \ep_i.\]
Recall the automorphism of the group $\Gamma$ also denoted by $\phi$ in \S\ref{Ssec:Braid}. 
By definition, if $x \in \Pi$ is homogeneous of degree $\gamma \in \Gamma$, then $\phi(x)$ is homogeneous of degree $\phi(\gamma)$. 
For a left $\Pi$-module $M$, let $M^\phi$ be the right $\Pi$-module obtained by twisting the original left $\Pi$-module structure by the opposition $\phi$.
If $M$ is $\Gamma$-graded, $M^\phi$ is again $\Gamma$-graded by setting $(M^\phi)_\gamma \seq M_{\phi(\gamma)}$.
In particular, for $M \in \Pi \umod_\Gamma^+$, we have $\dim_\Gamma (M^\phi) = (\dim_\Gamma M)^\phi$.

\subsection{Projective resolutions}
Following \cite[\S 5.1]{GLS}, for each $i, j \in I$ with $i \sim j$, we define the bigraded $(H_i, H_j)$-bimodule ${}_i H_j$ by
\[ {}_i H_j  \seq \sum_{g=1}^{g_{ij}} H_i \alpha_{ij}^{(g)} H_j  \subset \Pi. \]
It is free as a left $H_i$-module and free as a right $H_j$-module. 
Moreover, the relation $(R1)$ gives the following:
\[ {}_i H_j = \bigoplus_{k=0}^{f_{ji}-1}\bigoplus_{g=1}^{g_{ij}}H_i(\alpha_{ij}^{(g)}\varepsilon_j^k) 
    = \bigoplus_{k=0}^{f_{ij}-1}\bigoplus_{g=1}^{g_{ij}}(\varepsilon_i^k \alpha_{ij}^{(g)}) H_j. \]
In particular, we get the following lemma.
\begin{Lem}\label{lem:iHjCartan}
For $i, j\in I$ with $i\sim j$, we have two isomorphisms
\[
{}_{H_i}({}_i H_j) \cong H_i^{\oplus (-q^{-d_j}tC_{ji}(q,t,\mmu)^\phi)}, \qquad
({}_i H_j) {}_{H_j} \cong H_j^{\oplus (-q^{-d_i}tC_{ij}(q,t, \mmu))}
\]
as $\Gamma$-graded left $H_i$-modules and as $\Gamma$-graded right $H_j$-modules respectively.
\end{Lem}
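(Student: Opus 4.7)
The plan is to prove both isomorphisms by direct computation, starting from the explicit free decompositions of ${}_iH_j$ already recorded just before the lemma statement.

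First I would treat the left $H_i$-module structure. Using the decomposition
\[ {}_iH_j = \bigoplus_{k=0}^{f_{ji}-1}\bigoplus_{g=1}^{g_{ij}}H_i\bigl(\alpha_{ij}^{(g)}\varepsilon_j^k\bigr), \]
I compute the $\Gamma$-degree of each generator from the assignment \eqref{eq:deg}:
\[ \deg\bigl(\alpha_{ij}^{(g)}\varepsilon_j^k\bigr) \;=\; q^{-d_if_{ij}+2d_jk}\,t\,\mu_{ij}^{(g)}. \]
By the grading-shift convention $(xV)_g = V_{x^{-1}g}$, the cyclic left $H_i$-module generated by such an element is isomorphic to $H_i$ shifted up by this degree, so
\[ {}_{H_i}\bigl({}_iH_j\bigr) \;\cong\; H_i^{\oplus m}, \qquad m \;=\; \sum_{k=0}^{f_{ji}-1}\sum_{g=1}^{g_{ij}} q^{-d_if_{ij}+2d_jk}\,t\,\mu_{ij}^{(g)}. \]

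Next I would identify $m$ with $-q^{-d_j}tC_{ji}(q,t,\mmu)^\phi$. By the definition~\eqref{eq:defC} (since $i\sim j$ implies $c_{ji}<0$),
\[ -q^{-d_j}t\,C_{ji}(q,t,\mmu) \;=\; q^{-d_j}t\,[f_{ji}]_{q^{d_j}}\sum_{g=1}^{g_{ij}}\mu_{ji}^{(g)}, \]
and applying $\phi$ (which fixes $q,t$ and sends $\mu_{ji}^{(g)} \mapsto \mu_{ij}^{(g)}$) produces the analogous sum with $\mu_{ij}^{(g)}$. Expanding the $q$-integer as $[f_{ji}]_{q^{d_j}} = \sum_{k=0}^{f_{ji}-1}q^{d_j(f_{ji}-1-2k)}$ and invoking the symmetrizability identity $d_if_{ij}=d_jf_{ji}$, the substitution $k\mapsto f_{ji}-1-k$ matches this expression term-by-term with $m$ above. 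This yields the first isomorphism.

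For the right $H_j$-module isomorphism, I would repeat the same argument verbatim using the alternative decomposition
\[ {}_iH_j = \bigoplus_{k=0}^{f_{ij}-1}\bigoplus_{g=1}^{g_{ij}}\bigl(\varepsilon_i^k\alpha_{ij}^{(g)}\bigr)H_j; \]
now the generating element has degree $q^{2d_ik-d_if_{ij}}t\mu_{ij}^{(g)}$, which already carries $\mu_{ij}^{(g)}$ (matching $C_{ij}(q,t,\mmu)$ itself), so no $\phi$-twist is required. The same $q$-integer expansion, this time in base $q^{d_i}$, together with $d_if_{ij}=d_jf_{ji}$, closes the identification.

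There is no substantial obstacle here: the lemma is essentially a bookkeeping exercise. The only point requiring care is keeping track of the grading-shift convention and correctly locating where the involution $\phi$ enters, i.e., observing that $\phi$ is needed precisely to convert the $\mu_{ij}^{(g)}$ appearing in the degree of $\alpha_{ij}^{(g)}$ into the $\mu_{ji}^{(g)}$ appearing in $C_{ji}(q,t,\mmu)$, and that no such conversion is needed when $C_{ij}(q,t,\mmu)$ itself is the target.
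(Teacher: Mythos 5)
Your proposal is correct and is exactly the argument the paper intends: the lemma is stated as an immediate consequence of the two free decompositions of ${}_iH_j$ obtained from relation (R1), and your computation (reading off $\deg(\alpha_{ij}^{(g)}\varepsilon_j^k)$ and $\deg(\varepsilon_i^k\alpha_{ij}^{(g)})$ from \eqref{eq:deg}, expanding the $q$-integers, and using $d_if_{ij}=d_jf_{ji}$) is precisely the bookkeeping that justifies it. Your identification of where $\phi$ enters, namely only on the left-module side to convert $\mu_{ij}^{(g)}$ into $\mu_{ji}^{(g)}$, is also the right observation.
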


Consider the following sequence of $\Gamma$-graded $(\Pi, \Pi)$-bimodules: 
\begin{equation} \label{eq:bmodres}
 \bigoplus_{i\in I} q^{-2d_i} t^2 \Pi e_i \otimes_i e_i \Pi
 \xrightarrow{\psi}
 \bigoplus_{i, j \in I: i \sim j} \Pi e_j \otimes_j {}_jH_i \otimes_i e_i \Pi 
 \xrightarrow{\varphi}
 \bigoplus_{i\in I} \Pi e_i \otimes_i e_i \Pi \rightarrow \Pi \rightarrow 0,
\end{equation}
where $\otimes_i \seq \otimes_{H_i}$ and the morphisms $\psi$ and $\varphi$ are given by 
\begin{align*}
\psi(e_i \otimes e_i) &\seq \sum_{j \sim i}\sum_{k=0}^{f_{ij}-1}\sum_{g=1}^{g_{ij}} \mathrm{sgn}_\Omega(i,j) \left(\ep_i^{k} \alpha_{ij}^{(g)} \otimes \alpha_{ji}^{(g)} \ep_i^{f_{ij}-1-k}\otimes e_i + e_i \otimes \ep_i^{k} \alpha_{ij}^{(g)} \otimes \alpha_{ji}^{(g)} \ep_i^{f_{ij}-1-k}\right), \\
\varphi(e_j \otimes x\otimes e_i) &\seq x \otimes e_i + e_j \otimes x.
\end{align*}
The other arrows $\bigoplus_{i\in I} \Pi e_i \otimes_i e_i \Pi \to \Pi \to 0$ are canonical. 
The relation (R2) ensures that the sequence~\eqref{eq:bmodres} forms a complex. 
For each $i \in I$, applying $(-)\otimes_{\Pi} E_i$ to \eqref{eq:bmodres} yields the following complex of $\Gamma$-graded (left) $\Pi$-modules:
\begin{equation} \label{res3}
q^{-2d_i}t^2 P_i \xrightarrow{\psi^{(i)}}
\bigoplus_{j\sim i} P_j^{\oplus (-q^{-d_i}tC_{ij}(q,t,\mmu)^\phi)}
\xrightarrow{\varphi^{(i)}}
P_i \to E_i \to 0.
\end{equation}
Here we used Lemma~\ref{lem:iHjCartan}.

\begin{Thm}[{\cite[Proposition 12.1 and Corollary 12.2]{GLS}, \cite[Theorem 3.3]{FM}}]
\label{Thm:Pres}
The complexes \eqref{eq:bmodres} and \eqref{res3} are exact.
Moreover, the followings hold.
\begin{enumerate}
\item When $C$ is of infinite type, we have $\Ker \psi = 0$ and $\Ker \psi^{(i)} =0$ for all $i \in I$.
\item When $C$ is of finite type, we have $\Ker \psi^{(i)} \cong q^{-rh^\vee}t^h\mu_{i^*i} E_{i^*}$ for each $i \in I$.
\end{enumerate}
\end{Thm}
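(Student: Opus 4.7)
The plan is to reduce to the ungraded resolutions of \cite[Section 12]{GLS} and their one-parameter enhancement in \cite[Theorem 3.3]{FM}, and then lift the statements to the $\Gamma$-graded setting by tracking degrees. All morphisms in \eqref{eq:bmodres} and \eqref{res3} are built from the relations (R1)--(R3) and cyclic derivatives of $W_\Omega$, so they are automatically $\Gamma$-homogeneous once one verifies that $W_\Omega$ sits in degree $t^2$. This follows from \eqref{eq:deg} together with the identities $d_i f_{ij} = d_j f_{ji}$ and $\mu_{ij}^{(g)} \mu_{ji}^{(g)} = 1$, which yield $\deg(\alpha_{ij}^{(g)} \alpha_{ji}^{(g)} \ep_i^{f_{ij}}) = t^2$.

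For the exactness of \eqref{eq:bmodres} I would follow the standard argument for Jacobi algebras of quivers with potential, as in \cite{GLS}. Surjectivity onto $\Pi$ is immediate; exactness at the third term reflects that the arrows of $\tQ$ generate the augmentation ideal; and exactness at the middle term follows because the image of $\psi$ is precisely the bimodule cut out by the cyclic derivatives of $W_\Omega$, which is the content of (R2). To pass to \eqref{res3}, I would apply $(-) \otimes_\Pi E_i$ and use Lemma~\ref{lem:iHjCartan} to identify the $\Gamma$-homogeneous terms; the exactness transfers because each $\Pi e_j \otimes_j {}_j H_i \otimes_i e_i \Pi$ is flat as a right $\Pi$-module, which can be checked using the local freeness of $\Pi$ over $\Pi_0$.

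The main content lies in identifying $\Ker \psi$ and $\Ker \psi^{(i)}$. In the infinite-type case, $\Pi$ has global dimension $2$ by \cite[Corollary 12.2]{GLS}, so $\psi$ and each $\psi^{(i)}$ are injective; this conclusion is insensitive to the grading. In the finite-type case, $\Pi$ is self-injective with Nakayama permutation $i \mapsto i^*$ on vertices, so $\Ker \psi^{(i)}$ is a grading-shifted copy of $E_{i^*}$, and the task reduces to determining its $\Gamma$-degree. The $q$- and $t$-factors $q^{-rh^\vee} t^h$ are already present in \cite[Theorem 3.3]{FM}. The main obstacle is the remaining mass factor $\mu_{i^*i}$, which should emerge as the mass-parameter incarnation of the $\nu$-twist appearing in Proposition~\ref{Prop:longest}; its well-definedness relies on the finite-type fact that the product of $\mu$'s along any path $i \sim i_1 \sim \cdots \sim i^*$ depends only on the endpoints. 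Concretely, I would either re-run the argument of \cite[Theorem 3.3]{FM} with all $\mu$-parameters kept non-specialized throughout, or deduce the precise shift from the $\mmu = 1$ case by invoking this path-independence.
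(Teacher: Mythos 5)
The paper offers no proof of this theorem at all: it is imported wholesale from \cite[Proposition 12.1 and Corollary 12.2]{GLS} and \cite[Theorem 3.3]{FM}, and your proposal supplies exactly the degree-tracking that the paper leaves implicit — in particular the homogeneity of $W_\Omega$ in degree $t^2$ via $d_if_{ij}=d_jf_{ji}$ and $\mu_{ij}^{(g)}\mu_{ji}^{(g)}=1$, and the identification of the mass factor $\mu_{i^*i}$ through the path-independence of the $\mu$'s in finite type, which is the right mechanism. One correction: your claim that $\Pi$ has global dimension $2$ in infinite type is false for generalized preprojective algebras with non-trivial symmetrizer — the simple modules $S_i$ have infinite projective dimension, as the periodic $E_i$-resolution preceding \eqref{eq:SS} shows, since $H_i=\kk[\ep_i]/(\ep_i^{r\ell/d_i})$ is not semisimple. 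What \cite{GLS} actually provides, and what you should cite directly, is precisely $\Ker\psi=0$ (equivalently, that locally free modules have projective dimension at most $2$); the injectivity of each $\psi^{(i)}$ then follows from that of $\psi$ together with the right-module splitting of \eqref{eq:bmodres}, not from any global-dimension bound.
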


\subsection{Euler-Poincar\'{e} pairing} 
\label{Ssec:EP}
For a $\Gamma$-graded right $\Pi$-module $M$ and a $\Gamma$-graded left $\Pi$-module $N$, the vector space $M \otimes_\Pi N$ is naturally $\Gamma$-graded.
Let $\gtor_k^\Pi (M, N)$ 
denote the $k$-th left derived functor of $M \mapsto M \otimes_\Pi N$ (or equivalently, that of $N \mapsto M \otimes_\Pi N$). 

\begin{Lem} \label{Lem:tor}
If $M \in \Pi^\op \umod_\Gamma^{\ge m}$ and $N \in \Pi \umod_\Gamma^{\ge n}$,  we have
\[ \gtor_k^\Pi(M, N) \in  \kk \umod_\Gamma^{\ge m+n} \quad \text{for any $k \in \Z_{\ge 0}$}.\]
\end{Lem}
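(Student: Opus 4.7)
The plan is to compute $\gtor_k^\Pi(M,N)$ using a projective resolution of $N$ tailored to the $t$-grading bound, and then to exploit that condition~(A) confines $\Pi_0$ to $t$-degree $0$, so that tensoring over $\Pi_0$ is additive in $t$-degree. First I would build a projective resolution $\cdots \to P_1 \to P_0 \to N \to 0$ in which every $P_k$ lies in $\Pi\umod_\Gamma^{\ge n}$ and is projective over $\Pi$, by iterating Lemma~\ref{Lem:PM}: it yields a surjection $P_0 \twoheadrightarrow N$ with $P_0 \in \Pi\umod_\Gamma^{\ge n}$ projective, whose kernel is a $\Pi$-submodule of $P_0$ and hence still lies in $\Pi\umod_\Gamma^{\ge n}$; repeat the construction. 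The explicit form of the projective cover produced in the proof of Lemma~\ref{Lem:PM} lets me further assume $P_k = \Pi \otimes_{\Pi_0} Q_k$ for some $\Gamma$-graded projective $\Pi_0$-module $Q_k$ concentrated in $t$-degrees $\ge n$.

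Second, I would read off the grading bound on $M \otimes_\Pi P_k \cong M \otimes_{\Pi_0} Q_k$. Since $\Pi_0$ sits entirely in $t$-degree $0$ by condition~(A), the defining tensor relations preserve $t$-degree, and the $t$-degree-$l$ piece of $M \otimes_{\Pi_0} Q_k$ is a quotient of $\bigoplus_{l'+l''=l} M_{l'} \otimes_\kk Q_{k,l''}$. The conditions $M_{l'}=0$ for $l' < m$ and $Q_{k,l''}=0$ for $l'' < n$ force vanishing when $l < m+n$; for $l \ge m+n$, only finitely many pairs $(l',l'')$ contribute and each summand $M_{l'} \otimes_\kk Q_{k,l''}$ is finite-dimensional, so the piece is finite-dimensional too. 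Hence $M \otimes_\Pi P_k \in \kk\umod_\Gamma^{\ge m+n}$ for all $k$, and because this category is closed under subquotients, the homology $\gtor_k^\Pi(M,N) = H_k(M \otimes_\Pi P_\bullet)$ lies in it as well.

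The only delicate point is bookkeeping: one must check that the resolution can be chosen $\Gamma$-homogeneous so that $M \otimes_\Pi P_\bullet$ is a genuine complex of $\Gamma$-graded vector spaces, and that local finiteness survives the tensor product. Both are immediate from condition~(A) and the explicit shape of the cover in Lemma~\ref{Lem:PM}, so no substantial obstacle is anticipated beyond careful tracking of the gradings.
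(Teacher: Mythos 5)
Your proposal is correct and follows essentially the same route as the paper: the paper likewise bounds the $\Gamma$-graded dimension of the degree-zero tensor product (as a quotient of $M\otimes_\kk N$, so that $\dim_\Gamma(M\otimes_\Pi N)$ is dominated by $\dim_\Gamma M\cdot\dim_\Gamma N\in\Z[\Gamma_0][\![t]\!]t^{m+n}$) and then handles $k>0$ via the projective covers of Lemma~\ref{Lem:PM}, just organized as a dimension-shifting induction rather than as homology of a full resolution. The differences are only presentational.
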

\begin{proof}
We see that $\dim_\Gamma (M\otimes_\Pi N)$ is not grater than $\dim_\Gamma M \cdot \dim_\Gamma N$ which belongs to $\Z[\Gamma_0][\![t]\!]t^{m+n}$ under the assumption. 
This proves the assertion for $k =0$.
The other case when $k > 0$ follows from this case and Lemma~\ref{Lem:PM}.  
\end{proof}

We consider the following finiteness condition for a pair $(M,N)$ of objects in $\Pi \umod_\Gamma^+$:
\begin{itemize}
\item[(B)]
For each $\gamma \in \Gamma$, the space $\gtor^\Pi_k(M^\phi, N)_\gamma$ vanishes for $k \gg 0$.
\end{itemize}
If $(M,N)$ satisfies the condition (B), their \emph{Euler-Poincar\'{e} (EP) pairing} 
\[ \langle M,N \rangle_\Gamma \seq \sum_{k =0}^{\infty} (-1)^k \dim_\Gamma \gtor^\Pi_k(M^\phi, N).\]
is well-defined as an element of $\Z[\![\Gamma ]\!]$. 
The next lemma is immediate from the definition.

\begin{Lem} \label{Lem:EP}
Let $M, N \in \Pi\umod_\Gamma^+$.
\begin{enumerate}
\item If $(M,N)$ satisfies {\rm (B)}, the opposite pair $(N,M)$ also satisfies {\rm (B)} and we have $\langle N,M \rangle_\Gamma = \langle M, N \rangle_\Gamma^\phi$.
\item If $(M,N)$ satisfies {\rm (B)}, the pair $(M^{\oplus a}, N^{\oplus b})$ also satisfies {\rm (B)} for any $a, b \in \Z_{\ge 0}[\Gamma]$ and we have 
$\langle M^{\oplus a}, N^{\oplus b} \rangle_\Gamma = a^\phi b \langle M, N \rangle_\Gamma$.
\item \label{Lem:EP:3} Suppose that there is an exact sequence $0 \to M' \to M \to M'' \to 0$ in $\Pi \umod_\Gamma^+$, and the pairs $(M', N)$ and $(M'', N)$ both satisfy {\rm (B)}.
Then the pair $(M,N)$ also satisfies {\rm (B)} and we have $\langle M, N\rangle_\Gamma = \langle M', N \rangle_\Gamma + \langle M'', N \rangle_\Gamma.$\qedhere
\end{enumerate}
\end{Lem}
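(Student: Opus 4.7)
The plan is to derive each assertion directly from standard properties of Tor, suitably adapted to the $\Gamma$-graded setting via the anti-involution $\phi\colon\Pi \to \Pi^\op$.

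For part (1), I would construct a natural isomorphism of $\Gamma$-graded abelian groups $M^\phi \otimes_\Pi N \cong N^\phi \otimes_\Pi M$ via the swap $m \otimes n \mapsto n \otimes m$. Well-definedness uses $\phi^2 = \mathrm{id}$: the defining relation $\phi(x)m \otimes n = m \otimes xn$ maps to $n \otimes \phi(x)m = xn \otimes m$, which is the corresponding relation on the other side. Tracking the $\Gamma$-grading, if $m \in (M^\phi)_{\gamma_1}$ and $n \in N_{\gamma_2}$, then after the swap $n$ lies in $(N^\phi)_{\phi(\gamma_2)}$ and $m$ in $M_{\phi(\gamma_1)}$, so the $\gamma$-component of $M^\phi \otimes_\Pi N$ is identified with the $\phi(\gamma)$-component of $N^\phi \otimes_\Pi M$. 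Consequently $\dim_\Gamma(N^\phi \otimes_\Pi M) = (\dim_\Gamma(M^\phi \otimes_\Pi N))^\phi$. This swap extends term by term to all higher $\gtor^\Pi_k$ by applying it to a projective resolution of one of the arguments (produced via Lemma~\ref{Lem:PM}); taking the alternating sum then transfers condition (B) and yields $\langle N, M\rangle_\Gamma = \langle M, N\rangle_\Gamma^\phi$.

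For part (2), I would exploit that tensor products distribute over direct sums and that grading shifts pass through as $(\gamma M)^\phi \otimes_\Pi (\gamma' N) \cong \phi(\gamma)\gamma' \cdot (M^\phi \otimes_\Pi N)$, using that a shift by $\gamma$ on a left module $M$ becomes a shift by $\phi(\gamma)$ on $M^\phi$. This identity propagates to all $\gtor^\Pi_k$. Writing $a = \sum a_\gamma \gamma$ and $b = \sum b_{\gamma'} \gamma'$ and summing over multiplicities produces the scalar factor $a^\phi b$ on graded dimensions and hence on EP pairings. Only finitely many shifts contribute to each fixed $\gamma$-component, so condition (B) is preserved.

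For part (3), I would invoke the long exact sequence obtained from $\gtor^\Pi_\bullet(-^\phi, N)$ applied to $0 \to M' \to M \to M'' \to 0$. For each fixed $\gamma \in \Gamma$, condition (B) on the outer pairs forces only finitely many terms of the $\gamma$-strand of this long exact sequence to be nonzero, so the same holds for the middle term, yielding (B) for $(M, N)$. The additivity $\langle M, N\rangle_\Gamma = \langle M', N\rangle_\Gamma + \langle M'', N\rangle_\Gamma$ then reduces to the vanishing of the Euler characteristic of a bounded long exact sequence of $\kk$-vector spaces, applied degree by degree. All three parts are routine; the only mildly delicate point is keeping track of the $\phi$-twist of the grading in part (1), and no substantive obstacle is expected.
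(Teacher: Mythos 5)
Your proof is correct and is exactly the standard argument the paper has in mind: the paper offers no proof at all, stating only that the lemma ``is immediate from the definition,'' and your write-up (the $\phi$-twisted swap isomorphism on $\gtor$ for (1), distributivity over shifts and direct sums for (2), and the long exact sequence of $\gtor$ for (3)) is the natural expansion of that remark, with the grading bookkeeping in (1) handled correctly.
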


\begin{Prop} \label{Prop:EP}
For any $i,j \in I$, the pair $(S_i, S_j)$ satisfy the condition {\rm (B)} and we have
\begin{align} \label{eq:ES}
\langle E_i, S_j \rangle_\Gamma &= \begin{cases} 
\displaystyle
\frac{q^{-d_i}t \left( C_{ij}(q,t,\mmu) - q^{-rh^\vee}t^{h}\mu_{ii^*} C_{i^*j}(q,t,\mmu) \right)}{1-q^{-2rh^\vee}t^{2h}}& \text{if $C$ is of finite type}, \\
q^{-d_i}tC_{ij}(q,t,\mmu) & \text{otherwise}, 
\end{cases}
\\
\label{eq:SS}
\langle S_i, S_j\rangle_\Gamma &= \frac{1-q^{2d_i}}{1-q^{2r\ell}} \langle E_i, S_j \rangle_\Gamma.
\end{align}
Here we understand $(1-\gamma)^{-1} = \sum_{k \ge 0}\gamma^{k} \in \Z[\![\Gamma]\!]$ for $\gamma \in \Gamma \setminus \{1\}$. 
\end{Prop}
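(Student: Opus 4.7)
The plan is to derive both identities from the (periodically extended) projective resolution \eqref{res3} of $E_i$ supplied by Theorem~\ref{Thm:Pres}, by applying $S_j^\phi \otimes_\Pi -$ and exploiting the identification $S_j^\phi \otimes_\Pi P_k \cong \delta_{j,k}\kk$ (concentrated in $\Gamma$-degree~$1$) to reduce the computation of the Tor groups to a bookkeeping of grading shifts along an alternating sum.

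First I would verify the finiteness condition (B) for $(E_i, S_j)$. When $C$ is of infinite type, \eqref{res3} is a length-two projective resolution by Theorem~\ref{Thm:Pres}(1), so higher Tor groups vanish and (B) is automatic. When $C$ is of finite type, Theorem~\ref{Thm:Pres}(2) iterates \eqref{res3} into a projective resolution of period $6$, whose successive periods are shifts of the resolutions of $E_i$ and $E_{i^*}$ by monomials built from $q^{-rh^\vee}t^h\mu_{i^*i}$ and $q^{-rh^\vee}t^h\mu_{ii^*}$; these telescope via $\mu_{i^*i}\mu_{ii^*}=1$ and $(i^*)^*=i$ to an overall shift $q^{-2rh^\vee}t^{2h}$ per period, so the strict increase of the $t$-grading between periods forces only finitely many positions to contribute at each fixed $\gamma \in \Gamma$, giving (B).

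Next I would compute the alternating sum of graded dimensions of $S_j^\phi \otimes_\Pi P_\bullet$. One period (positions $0,1,2$) contributes
\[ \delta_{i,j}(1+q^{-2d_i}t^2) + q^{-d_i}tC_{ij}(q,t,\mmu)^\phi\cdot \delta(j\sim i),\]
which collapses uniformly to $q^{-d_i}tC_{ij}(q,t,\mmu)^\phi$ by using $q^{-d_i}tC_{ii}(q,t,\mmu) = 1+q^{-2d_i}t^2$ for the diagonal and $C_{ij}=0$ for $j\neq i$ with $j\not\sim i$. In infinite type this is the full alternating sum, and Lemma~\ref{Lem:EP}(1) then yields $\langle E_i, S_j\rangle_\Gamma = q^{-d_i}tC_{ij}(q,t,\mmu)$ upon applying $\phi$. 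In finite type the next period (positions $3,4,5$), obtained from $\Ker\psi^{(i)}\cong q^{-rh^\vee}t^h\mu_{i^*i}E_{i^*}$ together with the overall sign $(-1)^3$, contributes $-q^{-rh^\vee}t^h\mu_{i^*i}\cdot q^{-d_i}tC_{i^*j}(q,t,\mmu)^\phi$ (using $d_{i^*}=d_i$); summing the resulting geometric series with ratio $q^{-2rh^\vee}t^{2h}$ and then applying $\phi$ (so that $\mu_{i^*i}\mapsto\mu_{ii^*}$) yields \eqref{eq:ES}.

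Finally, for \eqref{eq:SS} I would use the $\ep_i$-power filtration $E_i \supset \ep_iE_i \supset \cdots \supset \ep_i^{r\ell/d_i-1}E_i \supset 0$ whose successive quotients are $q^{2kd_i}S_i$, which categorifies the identity \eqref{eq:E=S}. Iterating Lemma~\ref{Lem:EP}(3) along this finite filtration transfers (B) to $(S_i, S_j)$ and gives $\langle E_i, S_j\rangle_\Gamma = \sum_{k=0}^{r\ell/d_i-1}q^{2kd_i}\langle S_i, S_j\rangle_\Gamma$ via Lemma~\ref{Lem:EP}(2) and $\phi(q)=q$; dividing out the resulting factor $(1-q^{2r\ell})/(1-q^{2d_i})$ yields \eqref{eq:SS}. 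The main obstacle will be the finite-type bookkeeping: accurately identifying the iterated syzygies with the monomially shifted copies of the resolution of $E_{i^*}$, tracking the factor $q^{-rh^\vee}t^h\mu_{i^*i}$ under $\phi$ and through the geometric sum, and absorbing the signs from $\mathrm{sgn}_\Omega$ in the differentials of \eqref{eq:bmodres}; the propagation of (B) through the filtration of $E_i$ is a secondary but delicate point, most cleanly initiated at the socle where the finite-dimensionality of $E_i$ renders the boundedness of Tor obvious.
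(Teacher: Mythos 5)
Your computation of \eqref{eq:ES} is correct and is essentially the paper's argument: the paper's proof consists of the single remark that \eqref{eq:ES} ``directly follows from Theorem~\ref{Thm:Pres}'', and what you spell out --- tensoring the (periodically spliced) resolution \eqref{res3} against the simple, collapsing the contribution of one period to $q^{-d_i}tC_{ij}(q,t,\mmu)^\phi$ via $q^{-d_i}tC_{ii}(q,t,\mmu)=1+q^{-2d_i}t^2$, summing the geometric series with ratio $q^{-2rh^\vee}t^{2h}$ in finite type, and applying $\phi$ at the end --- is exactly that computation, carried out correctly (including the points $d_{i^*}=d_i$, $\mu_{i^*i}\mu_{ii^*}=1$ and $\phi(\mu_{i^*i})=\mu_{ii^*}$). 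For \eqref{eq:SS} you use the finite $\ep_i$-adic filtration of $E_i$ with subquotients $q^{2kd_i}S_i$ and then invert the factor $\sum_{k=0}^{r\ell/d_i-1}q^{2kd_i}=(1-q^{2r\ell})/(1-q^{2d_i})$, whereas the paper goes the other way round, via the infinite $E_i$-resolution $\cdots\to q^{2r\ell}E_i\to q^{2d_i}E_i\to E_i\to S_i\to0$ of $S_i$, which yields $\langle S_i,S_j\rangle_\Gamma$ directly with no inversion step. Both routes give the identity \eqref{eq:SS}.

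The genuine gap is in your verification of condition (B) for the pair $(S_i,S_j)$, which is part of the statement. Lemma~\ref{Lem:EP}~\eqref{Lem:EP:3} propagates (B) from the sub and the quotient of a short exact sequence to the middle term; running it along your filtration of $E_i$ therefore deduces (B) for $(E_i,S_j)$ from (B) for $(S_i,S_j)$ --- the opposite of what you need --- and the long exact sequence of Tor does not break this circularity, since the other subquotients of the filtration are again shifts of $S_i$. Moreover, the claim that ``the finite-dimensionality of $E_i$ renders the boundedness of Tor obvious'' is false: $\Pi_0\cong\prod_i H_i$ with $H_i=\kk[\ep_i]/(\ep_i^{r\ell/d_i})$ is far from semisimple, and indeed the very formula \eqref{eq:SS} you are proving contains the infinite series $(1-q^{2r\ell})^{-1}=\sum_{m\ge0}q^{2mr\ell}$, so $\gtor_k^\Pi(S_i^\phi,S_j)$ is nonzero for infinitely many $k$ in general. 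What actually makes (B) hold is the grading: splicing the $E_i$-resolution of $S_i$ (equivalently, your filtration read backwards) with the periodic projective resolution of each $E_i$ produces a projective resolution of $S_i$ whose $k$-th term is generated in degrees whose $q$-exponents, and in finite type also $t$-exponents, tend to infinity with $k$; hence $\gtor_k^\Pi(S_i^\phi,S_j)_\gamma=0$ for $k\gg0$ at each fixed $\gamma\in\Gamma$. This degree-escape argument (which is what the paper, following \cite{FM}, relies on) needs to be stated explicitly; as written, your proposal does not establish (B) for $(S_i,S_j)$, and consequently the division step producing \eqref{eq:SS} is performed on a quantity not yet known to be well defined.
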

\begin{proof}
The former formula \eqref{eq:ES} directly follows from Theorem~\ref{Thm:Pres}.
The latter \eqref{eq:SS} follows from Theorem~\ref{Thm:Pres} and the fact that $S_i$ has an $E_i$-resolution of the form: 
\[ \cdots \to q^{2r\ell + 2d_i}E_i \to q^{2r\ell} E_i \to q^{2d_i}E_i \to E_i \to S_i \to 0. \]
See the proof of \cite[Proposition 3.11]{FM} for some more details.
\end{proof}

\begin{Cor}\label{Cor:fin}
For any $M,N \in \Pi\umod_\Gamma^+$, the pair $(M,N)$ satisfies the condition {\rm (B)}.
Moreover, the EP pairing induces a $\phi$-sesquilinear hermitian form on the $\Z[\Gamma_0](\!(t)\!)$-module $\hK(\Pi \umod_\Gamma^+)$ valued at $\Z[\Gamma_0][(1-q^{2r\ell})^{-1}](\!(t)\!)$.
\end{Cor}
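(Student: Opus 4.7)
My plan is to reduce the verification of condition (B) for an arbitrary pair $(M,N)$ to the case of simple modules treated in Proposition~\ref{Prop:EP}, by means of a $t$-grading truncation argument, and then to extend the resulting pairing to $\hK(\Pi\umod_\Gamma^+)$ using the explicit basis provided by Lemma~\ref{Lem:Sbasis}.

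For the first assertion, I would fix a prescribed homogeneous degree $\gamma \in t^a\Gamma_0$, suppose $M \in \Pi\umod_\Gamma^{\ge m}$ and $N \in \Pi\umod_\Gamma^{\ge n}$, and first show that for any integers $m' > a - n$ and $n' > a - m$, the truncation maps induce natural isomorphisms
\[
\gtor_k^\Pi(M^\phi, N)_\gamma \;\cong\; \gtor_k^\Pi\bigl((M/M_{\ge m'})^\phi,\, N/N_{\ge n'}\bigr)_\gamma
\qquad (k \ge 0).
\]
This would follow from the long exact sequences of $\gtor_\bullet^\Pi$ applied to the two short exact sequences $0 \to N_{\ge n'} \to N \to N/N_{\ge n'} \to 0$ and $0 \to M_{\ge m'} \to M \to M/M_{\ge m'} \to 0$, together with Lemma~\ref{Lem:tor}, which forces the $\gamma$-components of the Tor groups involving $M_{\ge m'}$ or $N_{\ge n'}$ to vanish because their $t$-supports lie strictly above $a$. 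Since the truncations $M/M_{\ge m'}$ and $N/N_{\ge n'}$ are then finite-dimensional, each admits a finite composition series whose subquotients are grading shifts of $\{S_i\}_{i \in I}$. Iterating Lemma~\ref{Lem:EP}~(3) along the composition series of $M/M_{\ge m'}$, and using Lemma~\ref{Lem:EP}~(1) to swap variables in order to iterate likewise for $N/N_{\ge n'}$, while absorbing grading shifts via Lemma~\ref{Lem:EP}~(2), reduces condition (B) to the case of simple pairs $(S_i, S_j)$ already established by Proposition~\ref{Prop:EP}.

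For the second assertion, I would observe that Lemma~\ref{Lem:EP} guarantees that $(M,N) \mapsto \langle M,N \rangle_\Gamma$ descends to a $\phi$-sesquilinear, additive, hermitian pairing on $K(\Pi\umod_\Gamma^+)$. By Lemma~\ref{Lem:Sbasis}, each element of $\hK(\Pi\umod_\Gamma^+)$ is a \emph{finite} $\Z[\Gamma_0](\!(t)\!)$-linear combination of the basis $\{[S_j]\}_{j \in I}$, so the pairing extends $\phi$-sesquilinearly to $\hK(\Pi\umod_\Gamma^+)$ via $\sum_{i,j} a_i^\phi b_j \langle S_i, S_j \rangle_\Gamma$, and formula~\eqref{eq:SS} confirms that the values lie in $\Z[\Gamma_0][(1-q^{2r\ell})^{-1}](\!(t)\!)$. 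The main obstacle I anticipate is the truncation step itself, where one must pair the long exact sequences of $\gtor$ in both variables with the $t$-degree bound of Lemma~\ref{Lem:tor} to precisely identify which finite-dimensional truncations contribute to a given $\gamma$-component; once this is secured, the composition-series reduction and the extension to $\hK(\Pi\umod_\Gamma^+)$ proceed formally.
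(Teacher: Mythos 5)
Your proposal is correct and follows essentially the same route as the paper: truncate $M$ and $N$ in the $t$-direction using Lemma~\ref{Lem:tor} to reduce the $\gamma$-component of the Tor groups to finite-dimensional quotients, then run composition series through Lemma~\ref{Lem:EP} to land on the simple pairs of Proposition~\ref{Prop:EP}. The only cosmetic difference is in the last step, where the paper passes to $\hK(\Pi\umod_\Gamma^+)$ by continuity of the pairing with respect to the filtration $\{K(\Pi\umod_\Gamma^{\ge n})\}_n$ rather than by sesquilinear extension over the basis $\{[S_j]\}$ of Lemma~\ref{Lem:Sbasis}; your truncation isomorphism already supplies the degreewise compatibility that makes these two phrasings equivalent.
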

\begin{proof}
Given $M,N \in \Pi\umod_\Gamma^+$, we shall show that $(M,N)$ satisfies the condition {\rm (B)}.
Without loss of generality, we may assume that $M, N \in \Pi \umod_\Gamma^{\ge 0}$.
For $\gamma \in \Gamma$ fixed, take $n \in \mathbb{Z}$ such that $\gamma \not \in \Gamma_0 t^{\mathbb{Z}_{> n}}$.
By Lemma~\ref{Lem:tor}, we have $\gtor_k^\Pi(M^\phi_{>n},N)_{\gamma}  =0$  and therefore $\gtor_k^\Pi(M^\phi,N)_\gamma \simeq \gtor_k^\Pi(M^\phi/M^\phi_{>n},N)_\gamma$ for any $k \in \mathbb{Z}_{\ge 0}$. Similarly, we have $\gtor_k^\Pi(M^\phi/M^\phi_{>n},N)_\gamma \simeq \gtor_k^\Pi(M^\phi/M^\phi_{>n},N/N_{>n})_\gamma$ and hence $\gtor_k^\Pi(M^\phi,N)_\gamma \simeq \gtor_k^\Pi(M^\phi/M^\phi_{>n},N/N_{>n})_\gamma$ for any $k \in \mathbb{Z}_{\ge 0}$. 
By Lemma~\ref{Lem:EP} and Proposition~\ref{Prop:EP}, we know that the condition {\rm (B)} is satisfied for any finite-dimensional modules. 
Therefore, for $k$ large enough, we have $\gtor_k^\Pi(M^\phi/M^\phi_{>n},N/N_{>n})_\gamma = 0$. 
Thus, the pair $(M,N)$ satisfies the condition {\rm (B)}.
Now, by Lemma~\ref{Lem:EP}~\eqref{Lem:EP:3} and Proposition~\ref{Prop:EP}, the EP pairing induces a pairing on the Grothendieck group $K(\Pi \umod_G^+)$ valued at $\mathbb{Z}[\Gamma_0][(1-q^{2r\ell})^{-1}](\!(t)\!)$.
Lemma~\ref{Lem:tor} tells us that this is continuous with respect to the topology given by the filtration $\{ K(\Pi \umod_G^{\ge n})\}_{n \in \mathbb{Z}}$. 
Therefore, it descends to a pairing on the completion $\hK(\Pi \umod_\Gamma^+)$ satisfying the desired properties. 
\end{proof}

Let $\F$ be an algebraic closure of the field $\Q(\Gamma_0)(\!( t )\!)$. 
We understand that $\Q(\Gamma)$ is a subfield of $\F$ by considering the Laurent expansions at $t = 0$.
By Corollary~\ref{Cor:fin} above, the EP pairing linearly extends to a $\phi$-sesquilinear hermitian form, again written by   
$\langle - , - \rangle_\Gamma$, on 
\[ \hK(\Pi \umod_\Gamma^+)_\F \seq \hK(\Pi \umod_\Gamma^+)\otimes_{\Z[\Gamma_0](\!(t)\!)} \F \]
valued at $\F$.
Note that the set $\{ [E_i] \}_{i \in I}$ forms an $\F$-basis of $\hK(\Pi \umod_\Gamma^+)_\F$ by Lemma~\ref{Lem:Sbasis} and \eqref{eq:E=S},
and that, if $M \in \Pi \umod_\Gamma^+$ is locally free, we have $[M] = \sum_{i \in I}(\rank_i M) [E_i]$.  

It is useful to introduce the module $\bar{P}_i \seq P_i/P_i \ep_i = (\Pi/\Pi\ep_i)e_i$ for each $i \in I$. 
We can easily prove the following (see~\cite[Lemma 2.5]{FM}).

\begin{Lem} \label{Lem:barP}
If $M \in \Pi \umod_\Gamma^+$ is locally free, we have $\langle \bar{P}_i, M \rangle_\Gamma = \rank_i M$.
In particular, we have $\langle \bar{P}_i, E_j \rangle_\Gamma = \delta_{i,j}$ and $\rank_i P_j = (\dim_\Gamma e_j \bar{P}_i)^\phi$ for any $i,j \in I$.
\end{Lem}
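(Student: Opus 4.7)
The plan is to reduce $\gtor^\Pi(\bar{P}_i^\phi, -)$ to Tor over the local ring $H_i$, exploiting the local freeness of $\Pi$. The first step is to identify $\bar{P}_i^\phi$ as a right $\Pi$-module: since $\phi$ sends $\Pi e_i$ to $e_i \Pi$ and $\Pi e_i \ep_i$ to $\ep_i \cdot e_i \Pi$, we obtain
\[ \bar{P}_i^\phi \cong e_i \Pi / \ep_i \cdot e_i \Pi \cong \kk \otimes_{H_i} e_i \Pi \]
as $\Gamma$-graded right $\Pi$-modules, where $\kk = H_i/(\ep_i)$ is viewed as a right $H_i$-module and $e_i \Pi$ carries commuting left $H_i$- and right $\Pi$-actions.

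Next I would build a projective resolution of $\bar{P}_i^\phi$ over $\Pi$ by tensoring a $\Gamma$-graded free resolution $F_\bullet \to \kk$ over $H_i$ with $e_i \Pi$. Each $F_k$ is a free $H_i$-module, so $F_k \otimes_{H_i} e_i \Pi$ is a direct sum of shifts of $e_i \Pi$, which is projective as a right $\Pi$-module (being a direct summand of $\Pi$). Exactness of the resulting complex is preserved because $e_i \Pi$ is locally free, hence flat, over $H_i$ by the GLS theorem. Applying $(-) \otimes_\Pi M$ and using $e_i \Pi \otimes_\Pi M = e_i M$ yields the change-of-rings isomorphism
\[ \gtor_k^\Pi(\bar{P}_i^\phi, M) \cong \gtor_k^{H_i}(\kk, e_i M). \]
When $M$ is locally free, $e_i M$ is a (locally) free left $H_i$-module by definition, so the right side vanishes for $k>0$, while the zeroth term $\kk \otimes_{H_i} e_i M = e_i(M/\ep_i M)$ has $\Gamma$-graded dimension equal to $\rank_i M$. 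This gives the main equality $\langle \bar{P}_i, M \rangle_\Gamma = \rank_i M$.

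The two consequences are then routine. Specializing $M = E_j$ with $\rank_i E_j = \delta_{i,j}$ yields $\langle \bar{P}_i, E_j \rangle_\Gamma = \delta_{i,j}$. For the final identity, the hermitian symmetry of Lemma~\ref{Lem:EP}\,(1) combined with the main equality applied to the locally free module $P_j$ gives
\[ \rank_i P_j = \langle \bar{P}_i, P_j \rangle_\Gamma = \langle P_j, \bar{P}_i \rangle_\Gamma^\phi, \]
and the right-hand pairing is immediate: $P_j^\phi \cong e_j \Pi$ is projective as a right $\Pi$-module, so only the zeroth Tor term $P_j^\phi \otimes_\Pi \bar{P}_i = e_j \bar{P}_i$ contributes, of graded dimension $\dim_\Gamma e_j \bar{P}_i$.

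The main bookkeeping task will be verifying that the $\phi$-twist interacts correctly with the $\Gamma$-grading in the identification $\bar{P}_i^\phi \cong \kk \otimes_{H_i} e_i \Pi$ (in particular, that the degree shift $\phi$ on $\Gamma$ matches the natural one on the quotient); once that is in place, the vanishing of higher Tor over $\Pi$ is an immediate change-of-rings consequence of the elementary vanishing over the local ring $H_i$ for a free module.
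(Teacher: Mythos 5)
Your argument is correct and is essentially the argument the paper delegates to \cite[Lemma~2.5]{FM} (stated there dually in terms of $\bar{I}_i$ and $\gext$, which the paper notes is equivalent via adjunction): one reduces $\gtor^\Pi_\bullet(\bar{P}_i^\phi,-)$ to $\gtor^{H_i}_\bullet(\kk, e_i(-))$ using that $e_i\Pi$ is free over $H_i$, and the higher terms vanish on locally free modules. The identification $\bar{P}_i^\phi\cong e_i\Pi/\ep_i e_i\Pi$, the grading bookkeeping via $(M^\phi)_\gamma=M_{\phi(\gamma)}$, and the two corollaries are all handled correctly.
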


On the other hand, we consider the $\F$-vector space $\sQ_\Gamma \otimes_{\Q(\Gamma)}\F$, on which the pairing $(-,-)_\Gamma$ extends linearly.
Let $\Psi$ be the $\F$-linear automorphism of $\sQ_\Gamma \otimes_{\Q(\Gamma)}\F$ given by 
\[ \Psi \seq \begin{cases}
\displaystyle
(1+ q^{-rh^\vee}t^h \nu)^{-1} =  
\frac{\id-q^{-rh^\vee}t^h\nu}{1-q^{-2rh^\vee}t^{2h}}  & \text{if $C$ is of finite type}, \\
\id & \text{otherwise}.
\end{cases}
\]
Here $\nu$ is the linear operator on $\sQ_\Gamma$ given by $\nu(\alpha_i) = \mu_{i^*i}\alpha_{i^*}$, which we have already defined in \S\ref{Ssec:Braid}. 
Let us choose an element $\kappa_\ell \in \F$ satisfying 
$
\kappa_\ell^2 = 
q^{r\ell}[r\ell]_qt^{-1}$.
\begin{Thm} \label{Thm:chi}
The assignment $[E_i] \mapsto \kappa_\ell \alpha_i^\vee \, (i \in I)$ gives an $\F$-linear isomorphism
\[ \chi_\ell \colon \hK(\Pi \umod_\Gamma^+)_\F  \to \sQ_\Gamma \otimes_{\Q(\Gamma)} \F\]
satisfying the following properties:
\begin{enumerate}
\item \label{Thm:chi:1}
For any $i \in I$, we have $\chi_\ell[S_i] = \kappa_\ell^{-1} \alpha_i$.
\item \label{Thm:chi:2}
For any $x,y \in \hK(\Pi \umod_\Gamma^+)_\F$, we have 
$\langle x,y \rangle_\Gamma = 
( \Psi\chi_\ell(x), \chi_\ell(y))_\Gamma.$
\item \label{Thm:chi:3}
For any $i \in I$, we have
$\varpi_i^\vee = \kappa_\ell^{-1}\Psi\chi_\ell[P_i]$ and $\varpi_i = q^{-d_i}t\kappa_\ell \Psi \chi_\ell[\bar{P}_i]$.
\end{enumerate}
\end{Thm}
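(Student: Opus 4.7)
The plan is to first establish $\chi_\ell$ as a well-defined $\F$-linear isomorphism, then dispatch \eqref{Thm:chi:1} by a direct scalar computation, \eqref{Thm:chi:2} by checking on a basis, and finally \eqref{Thm:chi:3} by chasing the projective resolution \eqref{res3} through $\chi_\ell$. Well-definedness is immediate: Lemma~\ref{Lem:Sbasis} together with \eqref{eq:E=S} says that $\{[E_i]\}_{i\in I}$ forms an $\F$-basis of $\hK(\Pi\umod_\Gamma^+)_\F$, while $\{\alpha_i^\vee\}_{i\in I}$ is an $\F$-basis of $\sQ_\Gamma \otimes_{\Q(\Gamma)} \F$.

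For \eqref{Thm:chi:1}, I would substitute $[S_i] = \tfrac{1-q^{2d_i}}{1-q^{2r\ell}}[E_i]$ (from \eqref{eq:E=S}) into $\chi_\ell[S_i]$, expand $\alpha_i^\vee = q^{-d_i}t[d_i]_q^{-1}\alpha_i$, and simplify; using $1-q^{2d_i} = -q^{d_i}(q^{d_i}-q^{-d_i})$, the scalar reduces to $\kappa_\ell^{-2}$ via $\kappa_\ell^2 = q^{r\ell}[r\ell]_q t^{-1}$. For \eqref{Thm:chi:2}, both sides are $\phi$-sesquilinear hermitian forms on $\hK(\Pi\umod_\Gamma^+)_\F$, so the identity is established once it is checked on the pairs $(E_i,S_j)$. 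Using $\kappa_\ell^\phi = \kappa_\ell$, the desired equality reduces to $\langle E_i,S_j\rangle_\Gamma = (\Psi\alpha_i^\vee,\alpha_j)_\Gamma$: in infinite type this follows from $\Psi=\id$ and $(\alpha_i^\vee,\alpha_j)_\Gamma = q^{-d_i}tC_{ij}(q,t,\mmu)$, while in finite type one expands $\Psi = (\id - q^{-rh^\vee}t^h\nu)(1-q^{-2rh^\vee}t^{2h})^{-1}$ and uses $\nu(\alpha_i^\vee) = \mu_{i^*i}\alpha_{i^*}^\vee$ (valid since $d_{i^*}=d_i$) to recover the two-term numerator of \eqref{eq:ES}.

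The substantive step is \eqref{Thm:chi:3}. Taking the alternating sum of classes along \eqref{res3} and collapsing the outer term $q^{-2d_i}t^2[P_i]$ into the $j=i$ contribution via $1+q^{-2d_i}t^2 = q^{-d_i}tC_{ii}(q,t,\mmu)$, I obtain in $\hK(\Pi\umod_\Gamma^+)_\F$ the identity
\[ [E_i] = q^{-d_i}t\sum_{j\in I} C_{ij}(q,t,\mmu)^\phi [P_j] - [\Ker \psi^{(i)}], \]
where the last term vanishes in infinite type and equals $q^{-rh^\vee}t^h\mu_{i^*i}[E_{i^*}]$ in finite type (Theorem~\ref{Thm:Pres}). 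In infinite type, applying $\chi_\ell$ and comparing with \eqref{eq:ao} forces $\chi_\ell[P_j] = \kappa_\ell \varpi_j^\vee$, which is the claim because $\Psi = \id$. In finite type, applying the same identity to $i^*$ yields a coupled $2\times 2$ system in $[E_i]$ and $[E_{i^*}]$; using $\mu_{i^*i}\mu_{ii^*}=1$ I can solve it to express $[E_i]$ as an explicit $\F$-linear combination of the $[P_j]$. Plugging the ansatz $\chi_\ell[P_j] = \kappa_\ell \Psi^{-1}\varpi_j^\vee$ into the resulting formula, the operator identity $\Psi^{-1}(\id - q^{-rh^\vee}t^h\nu) = (1-q^{-2rh^\vee}t^{2h})\id$ (a consequence of $\nu^2 = \id$) collapses the right-hand side exactly to $(1-q^{-2rh^\vee}t^{2h})\alpha_i^\vee$, verifying the ansatz. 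Finally, the companion formula for $\varpi_i$ drops out of Lemma~\ref{Lem:barP}: from $\langle \bar P_i, E_j\rangle_\Gamma = \delta_{i,j}$, assertion \eqref{Thm:chi:2}, and the dual-basis computation $(\varpi_k,\alpha_j^\vee)_\Gamma = q^{-d_k}t\delta_{k,j}$, one reads off $\Psi\chi_\ell[\bar P_i] = q^{d_i}t^{-1}\kappa_\ell^{-1}\varpi_i$.

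The main obstacle is the finite-type case of \eqref{Thm:chi:3}: here \eqref{res3} is not a genuine resolution but merely ``$2$-periodic'' modulo the star-involution, and the role of $\Psi = (\id + q^{-rh^\vee}t^h\nu)^{-1}$ is precisely to absorb this periodicity. Once one spots the algebraic identity $\Psi^{-1}(\id - q^{-rh^\vee}t^h\nu) = (1-q^{-2rh^\vee}t^{2h})\id$ arising from $\nu^2=\id$, the argument goes through cleanly.
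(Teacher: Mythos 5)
Your proof is correct, and for the isomorphism and parts (1)--(2) it follows the same route as the paper: everything reduces to \eqref{eq:E=S}, \eqref{eq:ES}, \eqref{eq:SS} together with $\phi$-sesquilinearity checked on the bases $\{[E_i]\}$ and $\{[S_j]\}$ (your scalar computation for (1) and your use of $d_{i^*}=d_i$ and $\nu^2=\id$ in finite type are all fine). The one place you genuinely diverge is the first formula of (3). The paper obtains both formulas of (3) directly from (2) by duality of bases under the EP pairing: since $P_i$ is projective one has $\langle P_i, S_j\rangle_\Gamma=\delta_{i,j}$, so by (2) the element $\kappa_\ell^{-1}\Psi\chi_\ell[P_i]$ pairs with $\alpha_j=\kappa_\ell\chi_\ell[S_j]$ to give $\delta_{i,j}$ and hence equals $\varpi_i^\vee$ --- exactly the argument you use for $\varpi_i$ via $\langle \bar P_i,E_j\rangle_\Gamma=\delta_{i,j}$. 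Your detour through the alternating sum along \eqref{res3}, the coupled system in $[E_i],[E_{i^*}]$ and the identity $\Psi^{-1}(\id-q^{-rh^\vee}t^h\nu)=(1-q^{-2rh^\vee}t^{2h})\id$ is valid (the conclusion is forced since $C(q,t,\mmu)$ is invertible over $\F$, and the telescoping is legitimate because \eqref{res3} is a finite exact sequence by Theorem~\ref{Thm:Pres}), but it re-derives on the categorical side what \eqref{eq:ao} already encodes; the duality argument is shorter and avoids the finite/infinite case split. The only cosmetic point worth flagging is that $\kappa_\ell^\phi=\kappa_\ell$ requires choosing the extension of $\phi$ to $\F$ so as to fix the chosen square root $\kappa_\ell$, which the paper also leaves implicit.
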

\begin{proof}
As the set $\{ [E_i]\}_{i \in I}$ forms an $\F$-basis of $\hK(\Pi \umod_\Gamma^+)_\F$, the linear map $\chi_\ell$ is an isomorphism. 
The properties \eqref{Thm:chi:1} \& \eqref{Thm:chi:2} follow from the identities \eqref{eq:E=S}, \eqref{eq:ES} and \eqref{eq:SS}. 
Since the basis $\{[P_i]\}_{i \in I}$ (resp.~$\{[\bar{P}_i]\}_{i \in I}$) is dual to the basis $\{[S_i]\}_{i \in I}$ (resp.~$\{[E_i]\}_{i \in I}$ by Lemma~\ref{Lem:barP}), the property \eqref{Thm:chi:3} follows from the property~\eqref{Thm:chi:2}. 
\end{proof}

\begin{Cor} \label{Cor:chi}
Let $i,j \in I$.
\begin{enumerate}
\item
When $C$ is of finite type, we have 
\[ 
\tC_{ij}(q,t,\mmu) = 
\frac{q^{-d_i}t}{1-q^{-2rh^\vee}t^{2h}} \left(\dim_{\Gamma}(e_i \bar{P}_j) - q^{-rh^\vee}t^h \mu_{ii^*}\dim_{\Gamma}(e_{i^*} \bar{P}_j) \right).
\]
\item \label{Cor:chi:2} When $C$ is of infinite type, we have
\[
\tC_{ij}(q,t,\mmu) = 
q^{-d_j}t\dim_{\Gamma}(e_i \bar{P}_j). \]
\end{enumerate}
\end{Cor}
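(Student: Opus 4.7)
The plan is to read off Corollary~\ref{Cor:chi} from Theorem~\ref{Thm:chi}(3), by decomposing $[\bar{P}_j]$ in the basis $\{[S_i]\}_{i\in I}$ of $\hK(\Pi\umod_\Gamma^+)_\F$ and extracting coefficients via the additive functional $M \mapsto \dim_\Gamma e_i M$. The preparatory step is to express $\varpi_j$ in the simple root basis. Inverting the relation $\alpha_i = \sum_{k\in I} C_{ki}(q,t,\mmu)\varpi_k$ from~\eqref{eq:ao}, and using the $\phi$-Hermitian symmetry $[d_i]_q\tC_{ij}^\phi = [d_j]_q\tC_{ji}$ of $\tC \cdot \diag([d_i]_q^{-1})$ (inherited from the $\phi$-Hermicity of the matrix $([d_i]_q C_{ij})_{i,j\in I}$), a short calculation gives the identity
\[
\varpi_j = \sum_{i \in I}\tC_{ij}(q,t,\mmu)\,\alpha_i
\]
in $\sQ_\Gamma$.

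Rearranging Theorem~\ref{Thm:chi}(3) to $\chi_\ell[\bar{P}_j] = q^{d_j}t^{-1}\kappa_\ell^{-1}\Psi^{-1}\varpi_j$, substituting the formula above, and invoking $\chi_\ell[S_i] = \kappa_\ell^{-1}\alpha_i$ from Theorem~\ref{Thm:chi}(1), we obtain, for $C$ of infinite type ($\Psi = \id$),
\[
[\bar{P}_j] = q^{d_j}t^{-1}\sum_{i\in I}\tC_{ij}(q,t,\mmu)\,[S_i]
\]
in $\hK(\Pi\umod_\Gamma^+)_\F$. For $C$ of finite type, the identity $\nu^2=\id$ forces $\Psi^{-1} = \id + q^{-rh^\vee}t^h\nu$, and combining this with $\nu(\alpha_k) = \mu_{k^*k}\alpha_{k^*}$ yields the analogous expansion of $[\bar{P}_j]$ with $\tC_{ij}$ replaced by $\tC_{ij} + q^{-rh^\vee}t^h\mu_{ii^*}\tC_{i^*j}$.

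To finish, I would apply the functional $\dim_\Gamma e_i(-)$ to both sides. Because $e_i$ is an exact endofunctor of $\Pi\umod_\Gamma^+$ that preserves the $t$-adic filtration used to build $\hK$, this functional descends to $K(\Pi\umod_\Gamma^+)$, extends continuously to $\hK(\Pi\umod_\Gamma^+)$, and then $\F$-linearly to $\hK(\Pi\umod_\Gamma^+)_\F$. Since $\dim_\Gamma e_i S_j = \delta_{i,j}$, it simply reads off the coefficient of $[S_i]$. In the infinite type case this immediately yields assertion~(2). In the finite type case, the relation arising from index $i$ together with its analogue obtained by replacing $i$ with $i^*$ constitutes a $2\times 2$ linear system for the pair $(\tC_{ij},\tC_{i^*j})$; its determinant equals $1 - q^{-2rh^\vee}t^{2h}$ by virtue of $\mu_{ii^*}\mu_{i^*i}=1$, and solving for $\tC_{ij}$ produces assertion~(1).

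The only mildly delicate step is the continuous extension of the coefficient-extraction functional $\dim_\Gamma e_i(-)$ to the completed Grothendieck group, but this is handled by exactly the same argument that was used in Corollary~\ref{Cor:fin} for the Euler--Poincar\'e pairing (exactness of $e_i$, boundedness of graded dimensions via Lemma~\ref{Lem:tor}-type estimates, and compatibility with the $t$-adic filtration); the remainder of the proof is pure linear algebra bookkeeping with $\phi$-sesquilinearity.
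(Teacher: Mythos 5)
Your proposal is correct and is essentially the paper's own proof, which consists precisely of combining Theorem~\ref{Thm:chi} with the inversion $\varpi_j = \sum_{i}\tC_{ij}(q,t,\mmu)\alpha_i$ of \eqref{eq:ao} and reading off $[S_i]$-coefficients via $\dim_\Gamma e_i(-)$ (no $\phi$-Hermitian symmetry is actually needed for the inversion step, but that is harmless). Note that your finite-type computation yields the prefactor $q^{-d_j}t/(1-q^{-2rh^\vee}t^{2h})$, whereas the statement prints $q^{-d_i}t/(1-q^{-2rh^\vee}t^{h})$; a check against \eqref{eq:ES} and the type $\mathrm{A}_1$ case shows the printed $t^h$ (and likely $d_i$) is a typo, so your version is the one the argument actually delivers.
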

\begin{proof}
It follows from Theorem~\ref{Thm:chi} and the inversion of \eqref{eq:ao}.
\end{proof}

In particular, Corollary~\ref{Cor:chi}~\eqref{Cor:chi:2} proves Theorem~\ref{Thm:tCpos}. 
\begin{Rem}
Since $\langle P_i, \bar{P}_j \rangle_{\Gamma} = \dim_\Gamma (e_i \bar{P}_j)$, Corollary~\ref{Cor:chi} interprets the matrix $\tC(q,t,\mmu)$ in terms of the EP pairing between the bases $\{[P_i]\}_{i \in I}$ and $\{[\bar{P}_i]\}_{i \in I}$.
In this sense, Corollary~\ref{Cor:chi} is dual to \eqref{eq:ES} in Proposition~\ref{Prop:EP}.
\end{Rem}
\begin{Rem}
In the previous paper~\cite{FM}, we dealt with GCMs of finite type and finite dimensional $(q,t)$-graded $\Pi$-modules. Therein, we used the modules $\bar{I}_i \seq \bD(\bar{P}_i^\phi)$ and the graded extension groups $\gext_\Pi^k$, where $\bD$ is the graded $\kk$-dual functor, instead of the modules $\bar{P}_i$ and the graded torsion groups $\gtor^\Pi_k$.
Note that the two discussions are mutually equivalent thanks to the usual adjunction (cf.~\cite[\S A.4 Proposition~4.11]{ASS}), i.e., we have $\bD(\gtor_k^\Pi(\bD(M),N)) \simeq \gext^k_\Pi(N,M)$ for $M,N \in \Pi \umod_\Gamma^+$. 
In this sense, our discussion here is a slight generalization of that in \cite{FM} with the additional $\mmu$-grading.
\end{Rem}

\subsection{Braid group action}\label{sec:braid}

Recall the two-sided ideal $J_i = \Pi (1-e_i) \Pi$.
For any $M \in \Pi \umod_\Gamma^+$ and $k \in \Z_{\ge 0}$, we see that $\gtor_k^\Pi (J_i, M )$ also belongs to $\Pi \umod_\Gamma^+$.
When $C$ is of infinite type, $J_i^{\phi} = J_i$ has projective dimension at most $1$. In particular, the derived tensor product $J_i \Lotimes_\Pi M$ is an object in the bounded derived category $\mathcal{D}^b (\Pi \umod_\Gamma^+)$ for each $M \in \Pi \umod_\Gamma^+$.
By the natural identification $K(\mathcal{D}^b (\Pi \umod_\Gamma^+))\cong K(\Pi \umod_\Gamma^+)$ and the canonical map $K(\Pi \umod_\Gamma^+)\rightarrow \hK(\Pi\umod_\Gamma^+)$, it gives the element  
\begin{equation} \label{eq:Lotimes}
[J_i \Lotimes_\Pi M] = \sum_{k = 0}^{\infty}(-1)^k[\gtor^\Pi_k(J_i, M)] = \sum_{j \in I}\langle J_i e_j, M \rangle_\Gamma [S_j]
\end{equation}
of $\hK(\Pi\umod_\Gamma^+)$, where the second equality follows since $J_i^\phi = J_i$.
When $C$ is of finite type, we define the element $[J_i \Lotimes_\Pi M]$ of $\hK(\Pi\umod_\Gamma^+)$ by \eqref{eq:Lotimes}.
Recalling the relation $E_i = \Pi/J_i$, we have $[J_ie_j] = [P_j] -\delta_{i,j}[E_i]$
for each $j \in I$, and hence
\[ [J_i \Lotimes_\Pi M] = [M] - \langle E_i, M \rangle_\Gamma [S_i]. \]
Sending this equality by the isomorphism $\chi_\ell$ in Theorem~\ref{Thm:chi}, we get
\begin{equation} \label{eq:JiM}
\chi_\ell[J_i \Lotimes_\Pi M] = \chi_\ell[M]-(\Psi \alpha_i^\vee, \chi_\ell[M])_\Gamma \alpha_i.
\end{equation}
In particular, we obtain the following analogue of \cite[Proposition 2.10]{AIRT}.
\begin{Lem} \label{Lem:JT}
When $C$ is of infinite type, we have 
\[\chi_\ell[J_i \Lotimes_\Pi M] = T_i \chi_\ell[M] \qquad \text{for any $M \in \Pi \umod_\Gamma^+$ and $i \in I$}.\]
\end{Lem}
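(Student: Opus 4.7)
The plan is to recognize that Lemma~\ref{Lem:JT} is essentially an immediate specialization of the identity~\eqref{eq:JiM} just proved above the statement. Indeed, by the very definition of $\Psi$ appearing before Theorem~\ref{Thm:chi}, we have $\Psi = \id$ on $\sQ_\Gamma \otimes_{\Q(\Gamma)} \F$ whenever $C$ is of infinite type. Substituting this into~\eqref{eq:JiM} gives
\[
\chi_\ell[J_i \Lotimes_\Pi M] = \chi_\ell[M] - (\alpha_i^\vee, \chi_\ell[M])_\Gamma\,\alpha_i,
\]
and the right-hand side coincides with $T_i \chi_\ell[M]$ by the very definition~\eqref{Baction} of the braid operator. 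So the deduction from~\eqref{eq:JiM} takes one line.

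The step that genuinely needs the infinite type hypothesis has already been absorbed into the preamble of the lemma, and I would just want to make sure nothing is being used beyond what has been established. Concretely, the class $[J_i \Lotimes_\Pi M]$ is defined as an honest element of $\hK(\Pi\umod_\Gamma^+)$ precisely because Theorem~\ref{Thm:Pres}(1) ensures that $J_i$ has projective dimension at most one in the infinite type case, so that $J_i \Lotimes_\Pi M$ lives in $\mathcal{D}^b(\Pi\umod_\Gamma^+)$ and the alternating sum in~\eqref{eq:Lotimes} is finite. The remaining ingredients feeding~\eqref{eq:JiM}, namely the relation $[J_i e_j] = [P_j] - \delta_{i,j}[E_i]$ (which comes from the short exact sequence $0\to J_ie_j \to P_j \to \delta_{i,j}E_i \to 0$) and the evaluations $\chi_\ell[S_i] = \kappa_\ell^{-1}\alpha_i$ and $\langle E_i, M\rangle_\Gamma = \kappa_\ell(\Psi\alpha_i^\vee, \chi_\ell[M])_\Gamma$ from Theorem~\ref{Thm:chi}~\eqref{Thm:chi:1}--\eqref{Thm:chi:2}, are already in hand; the $\phi$-sesquilinearity used in the last evaluation is harmless because $\kappa_\ell^2 = q^{r\ell}[r\ell]_q t^{-1}$ is $\phi$-fixed.

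In short, no real obstacle remains: the lemma is a one-line specialization of~\eqref{eq:JiM}, and the only point I would highlight in the writeup is the appeal to Theorem~\ref{Thm:Pres}(1) to justify working with the derived tensor product $J_i\Lotimes_\Pi M$ inside the bounded derived category in the infinite type setting.
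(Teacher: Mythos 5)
Your proof is correct and is essentially identical to the paper's own argument: the paper likewise observes that $\Psi = \id$ in the infinite type case and that \eqref{eq:JiM} then reduces to the defining formula for $T_i$. The extra remarks you add (the role of Theorem~\ref{Thm:Pres}(1) in making $[J_i \Lotimes_\Pi M]$ well-defined, and the $\phi$-invariance of $\kappa_\ell$) are accurate but concern material the paper establishes before the lemma rather than in its proof.
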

\begin{proof}
When $C$ is of infinite type, we have $\Psi = \id$ by definition. 
Thus, the equation~\eqref{eq:JiM} coincides with the defining equation~\eqref{eq:Troot} of $T_i$ in this case. 
\end{proof}

\begin{proof}[Proof of Proposition~\ref{Prop:braidrel}]
When $C$ is of finite type, we can reduce the proof to the case of affine type since the collection $\{ T_i \}_{i \in I}$ can be extended to the collection $\{T_i\}_{i \in I\cup \{0\}}$ of the corresponding untwisted affine type.  
Hence, it suffices to consider the case when $C$ is of infinite type. 
In this case, the braid relations for  $\{ T_i \}_{i \in I}$ follow from Lemma~\ref{Lem:JT} and the fact that the ideals $\{J_i\}_{i \in I}$ satisfy the braid relations with respect to multiplication, which is due to \cite[Theorem~4.7]{FG}.  
For example, when $c_{ij}c_{ji}=1$, we have 
\[ J_i \Lotimes_\Pi J_j \Lotimes_\Pi J_i \simeq J_i \otimes_\Pi J_j \otimes_\Pi J_i \simeq J_iJ_jJ_i = J_jJ_iJ_j \simeq J_j \otimes_\Pi J_i \otimes_\Pi J_j \simeq J_j \Lotimes_\Pi J_i \Lotimes_\Pi J_j, \]
which implies the desired braid relation $T_iT_jT_i = T_jT_iT_j$.
\end{proof}

\begin{Cor}\label{Cor:braid:ac}
Let $M \in \Pi \umod_\Gamma^+$ with $\gtor_1^\Pi (J_i, M)=0$ for $i \in I$.
We have
\[\chi_\ell[J_i \otimes_{\Pi} M] = T_i\chi_\ell[M]. \]
Moreover, if we assume that $M$ is locally free, so is $J_i \otimes_{\Pi} M$.
\end{Cor}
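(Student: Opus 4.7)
The plan for the first equality is to apply Lemma~\ref{Lem:JT}, which gives $\chi_\ell[J_i \Lotimes_\Pi M] = T_i \chi_\ell[M]$, and then to identify $[J_i \otimes_\Pi M] = [J_i \Lotimes_\Pi M]$ in $\hK(\Pi \umod_\Gamma^+)$. The key step is to observe that $J_i$ has projective dimension at most $1$ as a right $\Pi$-module in the infinite-type case. Indeed, applying the anti-involution $\phi$ to the truncation $0 \to q^{-2d_i}t^2 P_i \to \bigoplus_{j\sim i} P_j^{\oplus(\cdots)} \to J_i e_i \to 0$ of~\eqref{res3} (whose left injectivity is Theorem~\ref{Thm:Pres}) yields a length-one projective resolution of $e_iJ_i$, while for $j\ne i$ the identity $e_j = e_j(1-e_i)e_j$ shows $e_j J_i = e_j\Pi$ is already projective. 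Hence $\gtor_k^\Pi(J_i, M) = 0$ for $k \ge 2$ automatically, and the hypothesis kills the $k=1$ term. Thus $J_i \otimes_\Pi M \simeq J_i \Lotimes_\Pi M$ in $\mathcal{D}^b(\Pi \umod_\Gamma^+)$, and the identification of classes gives the first assertion.

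For the second statement, I verify local freeness vertex by vertex using $e_j(J_i\otimes_\Pi M) = (e_jJ_i)\otimes_\Pi M$. For $j \ne i$, the observation above gives $(e_jJ_i)\otimes_\Pi M = e_jM$, which is $H_j$-free by local freeness of $M$. For $j = i$, tensoring the length-one resolution of $e_iJ_i$ with $M$ and invoking the Tor vanishing yields a short exact sequence of left $H_i$-modules
\[
0 \to q^{-2d_i}t^2 e_iM \to \bigoplus_{j\sim i} {}_iH_j \otimes_j e_jM \to e_i(J_i\otimes_\Pi M) \to 0.
\]
The leftmost term is $H_i$-free by local freeness of $M$. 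The middle term is also $H_i$-free: via the bimodule decomposition ${}_iH_j \cong \bigoplus_{g=1}^{g_{ij}} H_i \otimes_{H_{ij}} H_j$, where $H_{ij} = \kk[\varepsilon]/(\varepsilon^{r\ell d_{ij}/(d_id_j)})$ embeds in $H_i$ and $H_j$ via $\varepsilon\mapsto\varepsilon_i^{f_{ij}}$ and $\varepsilon\mapsto\varepsilon_j^{f_{ji}}$ respectively, we obtain ${}_iH_j \otimes_j e_jM \cong \bigoplus_g H_i \otimes_{H_{ij}} e_jM$, which is $H_i$-free because $e_jM$ inherits $H_{ij}$-freeness from its $H_j$-freeness.

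The main obstacle is the final step: over the local artinian ring $H_i$ the cokernel of an injection between free modules need not be free. I plan to address this by checking that the boundary map stays injective after reducing modulo $\varepsilon_i$, which is equivalent to $H_i$-freeness of the cokernel. Using the explicit description of $\psi$ in~\eqref{eq:bmodres}, modulo $\varepsilon_i$ the boundary becomes a map $e_iM/\varepsilon_i e_iM \to \bigoplus_{j\sim i}(e_jM/\varepsilon_j^{f_{ji}}e_jM)^{\oplus g_{ij}}$ expressible purely in terms of the action of the arrows $\alpha_{ij}^{(g)}$ on $M$; its injectivity can be extracted from the locally free structure of $M$ together with the assumed vanishing $\gtor_1^\Pi(J_i,M) = 0$ by a direct homological computation, completing the proof.
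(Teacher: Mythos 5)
Your argument for the first equality is, in the infinite-type case, the same as the paper's (projective dimension of $J_i$ at most one via Theorem~\ref{Thm:Pres} and the anti-involution $\phi$, hence $[J_i\otimes_\Pi M]=[J_i\Lotimes_\Pi M]$, then Lemma~\ref{Lem:JT}). But the corollary is stated for arbitrary $C$, and you never treat finite type, where your strategy genuinely breaks down: there $\Ker\psi^{(i)}\cong q^{-rh^\vee}t^h\mu_{i^*i}E_{i^*}\neq 0$, so $J_i$ has infinite projective dimension and $[J_i\otimes_\Pi M]\neq[J_i\Lotimes_\Pi M]$; moreover Lemma~\ref{Lem:JT} itself is only valid for infinite type, since by \eqref{eq:JiM} the derived class produces $(\Psi\alpha_i^\vee,-)_\Gamma$ rather than $(\alpha_i^\vee,-)_\Gamma$ and $\Psi\neq\id$ in finite type. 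The paper closes this case by a different device: the exact embedding into the untwisted affine algebra $\hat\Pi$, giving $J_i\otimes_\Pi M\simeq\hat J_i\otimes_{\hat\Pi}M$ and reducing to the infinite-type statement. The same omission infects your proof of local freeness, whose short exact sequence relies on the left-exactness of the truncated complex \eqref{res3}, again an infinite-type phenomenon.

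For the local-freeness claim (in infinite type), your setup is fine up to the short exact sequence $0\to q^{-2d_i}t^2e_iM\to\bigoplus_{j\sim i}{}_iH_j\otimes_je_jM\to e_i(J_i\otimes_\Pi M)\to 0$, and your criterion (freeness of the cokernel is equivalent to injectivity of the map modulo $\varepsilon_i$) is correct. But the step that actually carries the burden --- proving that injectivity --- is only announced (``can be extracted \dots by a direct homological computation''), not performed; as written this is a gap, and it is precisely the point the paper outsources to the proof of Proposition~9.4 of Gei\ss--Leclerc--Schr\"oer. Note, however, that your worry is misplaced and the gap is easy to fill without any computation: $H_i=\kk[\varepsilon_i]/(\varepsilon_i^{r\ell/d_i})$ is a (graded) Frobenius algebra, so graded free $H_i$-modules are graded injective; the injection $e_iM\hookrightarrow\bigoplus_{j\sim i}{}_iH_j\otimes_je_jM$ of graded free $H_i$-modules therefore splits, and the cokernel, being a direct summand of a free module over a local ring, is free. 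So for this particular ring the cokernel of an injection between free modules \emph{is} automatically free, and the reduction to injectivity modulo $\varepsilon_i$ is an unnecessary detour.
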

\begin{proof}
     The first assertion is a direct consequence of Lemma~\ref{Lem:JT}.
     For $C$ of infinite type, since projective dimension of $J_i$ is at most $1$ by Theorem~\ref{Thm:Pres},
    our involution $\phi$ yields that $\gtor_k^\Pi (J_i, M)=0$ also for $k \geq 2$. This shows $[J_i \otimes_{\Pi} M]= [J_i \Lotimes_{\Pi} M]$ when $C$ is of infinite type. When $C$ is of finite type, our assertion follows easily from the exact embedding to the corresponding untwisted affine type $\hat{\Pi}$. Namely, we have an isomorphism $J_i\otimes_{\Pi} M \simeq \hat{J}_i \otimes_{\hat{\Pi}} M$, where $\hat{J}_i \coloneqq \hat{\Pi}(1-e_i)\hat{\Pi}$. The last assertion is just an analogue of \cite[Proof of Proposition~9.4]{GLS}.
\end{proof}
%\begin{Rem}
%For any $M \in \Pi\umod_\Gamma^+$, the equality $\sub_i{M}=0$ is equivalent to $\gtor_1^\Pi (J_i, M)=0$ by the argument after forgetting our grading \cite[Theorem 2.38, Proof of Proposition~2.42]{M} (cf. \cite[Example~5.6]{BKT}).
%\end{Rem}
\begin{proof}[Proof of Theorem~\ref{Thm:inv1} when $C$ is of infinite type]
Assume that $C$ is of infinite type. 
Let $(i_k)_{k \in \Z_{>0}}$ be a sequence in $I$ satisfying the condition (2) in Theorem~\ref{Thm:inv1}.
We have a filtration $\Pi = F_0 \supset F_1 \supset F_2 \supset \cdots$ of $(\Pi,\Pi)$-bimodules given by $F_k \seq J_{i_1}J_{i_2} \cdots J_{i_k}$.
This filtration $\{F_k\}_{k \ge 0}$ is exhaustive, i.e., $\bigcap_{k \ge 0}F_k = 0$.
Indeed, since the algebra $\Pi$ satisfies the condition~$\mathrm{(A)}$, its radical filtration $\{ R_k\}_{k \ge 0}$ as a right $\Pi$-module is exhaustive. 
Note that, for any right $\Pi$-module $M$ and $i \in I$, the right module $M/MJ_i$ is the largest quotient of $M$ such that $(M/MJ_i)e_j \neq 0$ for $j \neq i$. 
Thanks to this fact and our assumption on the sequence $(i_k)_{k \in \Z_{>0}}$, 
we can find for each $k > 0$ a large integer $K$ such that $F_K \subset R_k$.
Thus, we have $\bigcap_{k} F_k = \bigcap_k R_k = 0$.
Moreover, by \cite[Proposition~3.8]{M2}, we have
\[F_{k-1}/F_{k} \simeq J_{i_1} J_{i_2}\cdots J_{i_{k-1}}\otimes_\Pi E_{i_k} \qquad \text{as $\Gamma$-graded left $\Pi$-modules}\]
for each $k \ge 1$.
Note that we have an equality $\gtor_1^\Pi (J_{i_1}, J_{i_2}\cdots J_{i_{k-1}}\otimes_\Pi E_{i_k})=0$ by \cite[Proof of Proposition~3.8]{M2}.
This yields $\chi_\ell [J_{i_1}J_{i_2}\cdots J_{i_{k-1}}\otimes_\Pi E_{i_k}] = \kappa_\ell T_{i_1}, T_{i_2}\cdots T_{i_{k-1}} \alpha^{\vee}_{i_k}$ inductively by Corollary~\ref{Cor:braid:ac}.

The filtration $\{F_k\}_{k \ge 0}$ induces an exhaustive filtration $\{ F_k e_i \}_{k \ge 0}$ of the projective module $P_i$ such that
\[ F_{k-1}e_i / F_{k}e_i \simeq 
\begin{cases}
J_{i_1} \cdots J_{i_{k-1}}\otimes_\Pi E_{i} & \text{if $i_k = i$}, \\
0 & \text{otherwise}
\end{cases}\]
for each $k \ge 1$.
Therefore, in $\hK(\Pi \umod_\Gamma^+)$, we have
\[ [P_i] = \sum_{k=1}^{\infty}[F_{k-1}e_i/F_{k}e_i] = \sum_{k \colon i_k = i}[J_{i_1} \cdots J_{i_{k-1}}\otimes_\Pi E_{i}].\]
Applying the isomorphism $\chi_\ell$ in Theorem~\ref{Thm:chi} to this equality, we obtain 
\[ \varpi_i^\vee = \sum_{k \colon i_k = i}T_{i_1}\cdots T_{i_{k-1}} \alpha_i^\vee.\] 
%Here we used Lemma~\ref{Lem:JT}.
This is rewritten as 
\[ \varpi_i = q^{-d_i}t \sum_{k \colon i_k = i}T_{i_1}\cdots T_{i_{k-1}} \alpha_i. \]
Since $\tC_{ij}(q,t,\mmu) = (\varpi_i^\vee, \varpi_j)_\Gamma$ by \eqref{eq:ao}, we obtain the desired equality~\eqref{eq:inv1} from this.
\end{proof}
\begin{Rem}
In \cite{IR}, they proved that the ideal semigroup $\langle J_i \mid i \in I \rangle$ gives the set of isoclasses of classical tilting $\Pi$-modules for any symmetric affine type $C$ with $D = \id$.
In our situation, our two-sided ideals are $\Gamma$-graded tilting objects whose $\Gamma$-graded endomorphism algebras are isomorphic to $\Pi$ when $C$ is of infinite type by arguments in \cite{BIRS,FG}.
In particular, our braid group symmetry on $\hK(\Pi\umod_\Gamma^+)$ is induced from auto-equivalences on the derived category (cf.~\cite[\S 2]{MP}).
\end{Rem}

\section{Remarks}
\label{Sec:Rem}

\subsection{Comparison with Kimura-Pestun's deformation} \label{Ssec:KP}

In their study of \emph{(fractional) quiver $\mathcal{W}$-algebras}, Kimura-Pestun~\cite{KP2} introduced a deformation of GCM called \emph{the mass-deformed Cartan matrix}.
In this subsection, we compare their mass-deformed Cartan matrix with our deformed GCM $C(q,t,\mmu)$.

Let $Q$ be a quiver without loops and $d \colon Q_0 \to \Z_{>0}$ be a function. 
Following~\cite[\S2.1]{KP2}, we call such a pair $(Q,d)$ \emph{a fractional quiver}. 
We set $d_i \seq d(i)$ and $d_{ij} \seq \gcd(d_i,d_j)$ for $i,j \in Q_0$.
Let $C = (c_{ij})_{i,j \in I}$ be a GCM. 
We say that a fractional quiver $(Q,d)$ is of type $C$ if $Q_0 = I$ and the following condition is satisfied:
\begin{equation} \label{eq:fQ}
c_{ij} = 2\delta_{i,j} - (d_j/d_{ij})|\{ e \in Q_1 \mid \{\se(e), \tl(e)\} = \{ i,j \}\}| \quad \text{for any $i,j \in I$}.\end{equation}
In this case, $D = \diag(d_i \mid i \in I)$ is a symmetrizer of $C$ and we have 
\[
g_{ij} = |\{ e \in Q_1 \mid \{\se(e), \tl(e)\} = \{ i,j \}\}|, \quad f_{ij} = d_j/d_{ij} \quad \text{when $i \sim j$.} 
\]
See \S\ref{Ssec:notation} for the definitions.
For a given fractional quiver $(Q,d)$ of type $C$, Kimura-Pestun introduced a matrix $\CKP = (\CKP_{ij})_{i,j \in I}$, whose $(i,j)$-entry $\CKP_{ij}$ is a Laurent polynomial in the formal parameters $q_1, q_2$ and $\mu_e$ for each $e \in Q_1$ given by 
\begin{equation} \label{eq:CKP}
\CKP_{ij} \seq \delta_{i,j} (1+q_1^{-d_i}q_2^{-1}) - \frac{1-q_1^{-d_j}}{1-q_1^{-d_{ij}}}\left( \sum_{e \colon i \to j}\mu_e^{-1} + \sum_{e \colon j \to i}\mu_e q_1^{-d_{ij}}q_2^{-1}\right). \end{equation}
The parameters $\mu_e$ are called \emph{mass-parameters}.
If we evaluate all the parameters to $1$, the matrix $\CKP$ coincides with the GCM $C$ by \eqref{eq:fQ}.

Now we fix a function $g \colon Q_1 \to \Z_{>0}$ whose restriction induces a bijection between $\{ e \in Q_1 \mid \{\se(e), \tl(e)\} = \{ i,j \}\}$ and $\{ g \in \Z \mid 1 \le g \le g_{ij} \}$ for each $i,j \in I$ with $i \sim j$. 
Then consider the monomial transformation $\Z[q_1^{\pm 1},q_2^{\pm 1},\mu_e^{\pm 1} \mid e \in Q_1] \to \Z[\Gamma]$ given by 
\begin{equation} \label{eq:mt}
q_1 \mapsto q^2, \qquad 
q_2 \mapsto t^{-2}, \qquad
\mu_e \mapsto q^{d_{ij}}t^{-1}\mu_{ij}^{(g(e))},
\end{equation}
where $i = \tl(e)$ and $j = \se(e)$.
Note that it induces an isomorphism if we formally add the square roots of $q_1$ and $q_2$.
Under this monomial transformation, for any $i,j \in I$, we have
\begin{equation} \label{eq:mtCKP} \CKP_{ij} \mapsto q^{-d_j}t\left(
\delta_{i,j} (q^{d_i}t^{-1} + q^{-d_i}t) - \delta(i \sim j)
[f_{ij}]_{q^{d_{ij}}} \sum_{g = 1}^{g_{ij}}\mu_{ij}^{(g)}
\right). 
\end{equation} 

\begin{Prop}
Under the monomial transformation \eqref{eq:mt}, the matrix $\CKP$ corresponds to the matrix $C(q,t,\mmu)q^{-D}t$ if and only if the following condition is satisfied: \begin{equation} \label{eq:condf}
\text{For any $i,j \in I$ with $i \sim j$, we have $f_{ij} = 1$ or $f_{ji}=1$.}
\end{equation}  
\end{Prop}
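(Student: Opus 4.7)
The plan is a direct entry-by-entry comparison, using the formula~\eqref{eq:mtCKP} on the Kimura--Pestun side and the definition~\eqref{eq:defC} on our side. First I observe that the diagonal entries already agree unconditionally: the image of $\CKP_{ii}$ under \eqref{eq:mt} is $1 + q^{-2d_i}t^2$ by \eqref{eq:mtCKP}, while $(C(q,t,\mmu)q^{-D}t)_{ii} = (q^{d_i}t^{-1}+q^{-d_i}t) \cdot q^{-d_i}t = 1 + q^{-2d_i}t^2$. Hence the whole question reduces to the off-diagonal case $i \sim j$.

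For such a pair, \eqref{eq:mtCKP} gives
\[
\CKP_{ij} \;\longmapsto\; -\,q^{-d_j}t \,[f_{ij}]_{q^{d_{ij}}} \sum_{g=1}^{g_{ij}}\mu_{ij}^{(g)},
\]
whereas the definition \eqref{eq:defC} yields
\[
(C(q,t,\mmu)q^{-D}t)_{ij} = -\,q^{-d_j}t\,[f_{ij}]_{q^{d_i}} \sum_{g=1}^{g_{ij}}\mu_{ij}^{(g)}.
\]
Since the mass-monomials $\mu_{ij}^{(g)}$ are $\Z$-linearly independent in $\Z[\Gamma]$, the two expressions agree (for fixed $i,j$) if and only if $[f_{ij}]_{q^{d_{ij}}} = [f_{ij}]_{q^{d_i}}$ as elements of $\Z[q^{\pm 1}]$.

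The final step is to characterize when this last equality holds. Expanding $[f]_{q^a} = q^{a(f-1)} + q^{a(f-3)} + \cdots + q^{-a(f-1)}$, the Laurent polynomial has top $q$-degree $a(f-1)$, which equals $0$ exactly when $f=1$. Since $d_{ij} \mid d_i$ always holds, with $d_{ij} = d_i$ iff $d_i \mid d_j$ iff $f_{ji} = d_i/d_{ij} = 1$, comparing top degrees shows $[f_{ij}]_{q^{d_{ij}}} = [f_{ij}]_{q^{d_i}}$ iff either $f_{ij}=1$ (both sides equal $1$) or $d_i = d_{ij}$ (i.e.\ $f_{ji}=1$, in which case $q^{d_i} = q^{d_{ij}}$). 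Requiring this for every edge $i \sim j$ is precisely condition~\eqref{eq:condf}, which proves both implications of the proposition.

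I do not expect any serious obstacle in this argument; the monomial transformation \eqref{eq:mt} was constructed exactly so that the two matrices look the same up to this single discrepancy $[f_{ij}]_{q^{d_i}}$ vs.\ $[f_{ij}]_{q^{d_{ij}}}$, and the comparison of these two $q$-integers is elementary. The only mildly delicate point is tracking the substitution carefully on the mass-parameters: one must use the relation $\mu_{ij}^{(g)}\mu_{ji}^{(g)}=1$ to rewrite $\mu_e^{-1}$ uniformly, so that the two sums $\sum_{e\colon i\to j}$ and $\sum_{e\colon j\to i}$ in~\eqref{eq:CKP} combine into a single sum $\sum_{g=1}^{g_{ij}}\mu_{ij}^{(g)}$ matching the formula~\eqref{eq:defC}.
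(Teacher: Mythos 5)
Your proof is correct and follows essentially the same route as the paper's: compare the image \eqref{eq:mtCKP} entrywise with \eqref{eq:defC} and observe that the only possible discrepancy is $[f_{ij}]_{q^{d_{ij}}}$ versus $[f_{ij}]_{q^{d_i}}$, which agree precisely when $f_{ij}=1$ or $d_{ij}=d_i$ (equivalently $f_{ji}=1$). Your top-degree argument for the $q$-integer comparison just makes explicit the step the paper leaves to the reader.
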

\begin{proof}
Compare \eqref{eq:mtCKP} with \eqref{eq:defC} and note that we have $[f_{ij}]_{q^{d_{ij}}} = [f_{ij}]_{q^{d_i}}$ for any $i,j \in I$ with $i \sim j$ if and only if the condition~\eqref{eq:condf} is satisfied.
\end{proof}
\begin{Ex}
If we take our GCM $C$ and its symmetrizer $D$  as
    \[
C = \begin{pmatrix}2&-6\\-9&2\end{pmatrix} \quad \text{and} \quad D=\diag(3, 2)
,
\]
not satisfying \eqref{eq:condf},
then the image of $\CKP$ under \eqref{eq:mt} is 
\[ \begin{pmatrix} 1+q^{-6}t^{2} & -(q^{-1}+q^{-3})t(\mu_{12}^{(1)}+\mu_{12}^{(2)}+\mu_{12}^{(3)}) \\ -(q^{-1}+q^{-3}+q^{-5})t(\mu_{21}^{(1)}+\mu_{21}^{(2)}+\mu_{21}^{(3)}) & 1+q^{-4}t^2 \end{pmatrix},\]
which is different from 
\[C(q, t, \mmu) q^{-D}t = \begin{pmatrix}
    1+q^{-6}t^2 & -(q + q^{-5})t(\mu_{12}^{(1)}+\mu_{12}^{(2)}+\mu_{12}^{(3)})\\ -(q + q^{-3} + q^{-7})t(\mu_{21}^{(1)}+\mu_{21}^{(2)}+\mu_{21}^{(3)}) & 1 + q^{-4}t^2
\end{pmatrix}.\]
\end{Ex}
\begin{Rem} 
The condition~\eqref{eq:condf} is satisfied for all finite and affine types. 
It is also satisfied when $C$ is symmetric (i.e., ${}^\mathtt{t}C = C$).
This \eqref{eq:condf} also appears in \cite[\S C(iv)]{NW} as a condition for two possible mathematical definitions of Coulomb branches of quiver gauge theories with symmetrizers to coincide with each other as schemes.
\end{Rem}
\begin{Rem} \label{Rem:qC}
When \eqref{eq:condf} is satisfied, we can assure that the evaluation at $t = 1$ makes sense in the inversion formulas. 
More precisely, assuming~\eqref{eq:condf}, we see that the matrix $X$ in \eqref{eq:X} is written in the form $X = q^{-1}X'$ with $X'$ being a $\Z[\mmu^\Z][q^{-1},t]$-valued matrix (see the proof of \cite[Lemma 4.3]{FO21}), and hence we have $\tC_{ij}(q,t,\mmu) \in \Z[\mmu^\Z][q^{-1},t][\![(q^{-1}t)]\!]$ for any $i,j \in I$.
Thus, under~\eqref{eq:condf}, the evaluation at $t=1$ gives a well-defined element $\tC_{ij}(q,1,\mmu)$ of $\Z[\mmu^{\Z}][\![q^{-1}]\!]$. 
\end{Rem}

 \subsection{Universality of the grading}\label{subsec:univ_grad}
 
In this subsection, we briefly explain how one can think that our grading \eqref{eq:deg} on the algebra $\tPi$ is universal. 
It is stated as follows.

We keep the notation in \S\ref{Ssec:GPA}.
Let $\tG$ be the (multiplicative) abelian group generated by the finite number of formal symbols $\{[a] \mid a \in \tQ_1 \}$ subject to the relations
\begin{equation} \label{eq:Whom}
 [\alpha_{i_1j_1}^{(g_1)}] [\alpha_{j_1i_1}^{(g_1)}][\ep_{i_1}]^{f_{i_1j_1}} = 
[\alpha_{i_2j_2}^{(g_2)}] [\alpha_{j_2i_2}^{(g_2)}][\ep_{i_2}]^{f_{i_2j_2}}\end{equation}
for any $i_k,j_k \in I$ with $i_k \sim j_k$ and $1 \le g_k \le g_{i_kj_k}$ ($k = 1,2$). 
Let $\tG \twoheadrightarrow \tG_f$ be the quotient by the torsion subgroup. 
By construction, for any free abelian group $G$, giving a homomorphism $\deg \colon \tG_f \to G$ is equivalent to giving $\tQ$ a structure of $G$-graded quiver $\deg \colon \tQ_1 \to G$ such that the potential $W_\Omega$ is homogeneous. 
In this sense, we can say that the tautological map $\tQ_1 \to \tG_f$ gives a universal grading on the algebra $\tPi$.     

Now recall our fixed symmetrizer  $D = \diag(d_i \mid i \in I)$ and set $d \seq \gcd(d_i \mid i \in I)$.
Let $\Gamma' \subset \Gamma$ be the subgroup generated by $\{ \deg(a) \mid a \in \tQ_1\}$.
Note that $\Gamma'$ is a free abelian group with a basis $\{q^{2d}, t^{2}\} \cup \{q^{-d_if_{ij}}t\mu^{(g)}_{ij}\mid (i,j) \in \Omega, 1 \le g \le g_{ij} \}$.

\begin{Prop}
The degree map \eqref{eq:deg} gives an isomorphism $\deg \colon \tG_f \simeq \Gamma'$.
\end{Prop}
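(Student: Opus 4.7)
The plan is to check that \eqref{eq:deg} gives a well-defined homomorphism on $\tG$, then verify that the induced map $\tG_f\to\Gamma'$ is surjective and matches ranks with $\Gamma'$, which forces it to be an isomorphism. The first step is to send each tuple $(i,j,g)$ through the formula: the image of $[\alpha_{ij}^{(g)}][\alpha_{ji}^{(g)}][\ep_i]^{f_{ij}}$ is
\[
q^{-d_i f_{ij}}t\mu_{ij}^{(g)}\cdot q^{-d_j f_{ji}}t\mu_{ji}^{(g)}\cdot q^{2d_i f_{ij}}=t^2,
\]
using $d_if_{ij}=d_jf_{ji}$ (a direct consequence of the symmetrizability $d_ic_{ij}=d_jc_{ji}$) and $\mu_{ij}^{(g)}\mu_{ji}^{(g)}=1$. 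Independence of $(i,j,g)$ confirms that the relations \eqref{eq:Whom} are respected, so we obtain a homomorphism $\tG\to\Gamma$ whose image equals $\Gamma'$ by construction. Since $\Gamma'\subset\Gamma$ is torsion-free, this descends to a surjection $\tG_f\twoheadrightarrow\Gamma'$.

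Next I would compute the rank of $\tG_f$. Introduce the common value $w\seq[\alpha_{ij}^{(g)}][\alpha_{ji}^{(g)}][\ep_i]^{f_{ij}}\in\tG$ (which is well-defined by \eqref{eq:Whom}), and for each $(i,j)\in\Omega$ and $1\le g\le g_{ij}$, use the relation indexed by the reversed tuple $(j,i,g)$ to solve $[\alpha_{ji}^{(g)}]=w[\alpha_{ij}^{(g)}]^{-1}[\ep_j]^{-f_{ji}}$. Substituting this back into the relations indexed by $(i,j,g)$ with $(i,j)\in\Omega$ collapses them to the $\varepsilon$-only relations $[\ep_i]^{f_{ij}}=[\ep_j]^{f_{ji}}$, independent of $g$. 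This yields the direct product decomposition
\[
\tG\;\cong\;\bigoplus_{(i,j)\in\Omega,\,g}\Z[\alpha_{ij}^{(g)}]\;\oplus\;\Z\cdot w\;\oplus\;E,\qquad E\seq\Z^{I}\Big/\bigl\langle f_{ij}e_i-f_{ji}e_j\mid (i,j)\in\Omega\bigr\rangle.
\]

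The main obstacle is pinning down the structure of the $\varepsilon$-part $E$, which may genuinely have torsion (for instance with $C$ of affine type $\widetilde{A}_1$, the class $[\ep_0]-[\ep_1]$ is of order $2$). Over $\Q$, however, the relation $f_{ij}e_i=f_{ji}e_j$ combined with $f_{ij}/f_{ji}=d_j/d_i$ yields $e_i/d_i=e_j/d_j$, and the irreducibility of $C$ forces all such ratios to coincide; hence $\dim_\Q(E\otimes\Q)=1$ and $E/(\text{torsion})\cong\Z$. Therefore $\rank(\tG_f)=\sum_{(i,j)\in\Omega}g_{ij}+1+1=2+\sum_{(i,j)\in\Omega}g_{ij}=\rank(\Gamma')$, and since a surjection between free abelian groups of equal finite rank is automatically an isomorphism, the proof concludes.
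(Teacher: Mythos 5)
Your proof is correct, but it takes a genuinely different route from the paper's. The paper constructs an explicit inverse $\iota\colon\Gamma'\to\tG_f$: choosing integers $a_i$ with $\sum_i a_id_i=d$, it sets $\iota(q^{2d})=\prod_i[\ep_i]^{a_i}$, $\iota(t^2)=w$, $\iota(q^{-d_if_{ij}}t\mu_{ij}^{(g)})=[\alpha_{ij}^{(g)}]$, and then verifies $\iota\circ\deg=\id$; the key step there is deriving $[\ep_i]^{d_j/d}=[\ep_j]^{d_i/d}$ in $\tG_f$ from $[\ep_i]^{f_{ij}}=[\ep_j]^{f_{ji}}$ by passing through $[\ep_i]^{r/d_i}=[\ep_j]^{r/d_j}$ and extracting roots, which is exactly where torsion-freeness is used. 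You instead avoid constructing the inverse: you verify well-definedness and surjectivity onto $\Gamma'$ (correct, using $d_if_{ij}=d_jf_{ji}$ and $\mu_{ij}^{(g)}\mu_{ji}^{(g)}=1$), read off a presentation of $\tG$ by Tietze moves that splits off the $[\alpha_{ij}^{(g)}]$ ($(i,j)\in\Omega$) and $w$ as free factors, compute that the $\varepsilon$-part has rank one over $\Q$ by irreducibility, and conclude by the fact that a surjection between free abelian groups of equal finite rank is injective. Both arguments are complete; yours is shorter and isolates cleanly where torsion can appear, while the paper's yields the inverse map explicitly. Two small remarks: your parenthetical example is off --- for $\widetilde{A}_1$ with minimal symmetrizer one has $f_{01}=f_{10}=1$, so $E\cong\Z^2/(e_0-e_1)\cong\Z$ is torsion-free (torsion does occur, e.g.\ for $I=\{1,2,3\}$ with $d=(1,2,1)$, where $2(e_1-e_3)=0$); and when asserting $\dim_\Q(E\otimes\Q)=1$ you should note that $E\otimes\Q\neq 0$, which is immediate since $E$ surjects onto $q^{2d\Z}\cong\Z$ under the degree map. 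Neither affects the validity of the argument.
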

\begin{proof}
Choose integers $\{a_{i}\}_{i \in I}$ satisfying $\sum_{i \in I}a_i d_i = d$.  
Let $e$ and $w$ be the elements of $\tG_f$ given by $e \seq \prod_{i \in I}[\ep_i]^{a_i}$ and $w = [\alpha_{ij}^{(g)}] [\alpha_{ji}^{(g)}][\ep_{i}]^{f_{ij}}$
respectively. 
Note that $w$ does not depend on the choice of $i,j \in I$ with $i \sim j$ and $1 \le g \le g_{ij}$ by \eqref{eq:Whom}.
We define a group homomorphism $\iota \colon \Gamma' \to \tG_f$ by $\iota(q^{2d}) \seq e$, $\iota(t^2) \seq w$ and $\iota(q^{-d_if_{ij}}t\mu^{(g)}_{ij})\seq [\alpha^{(g)}_{ij}]$ for $(i,j) \in \Omega$, $1 \le g \le g_{ij}$.
It is easy to see $\deg \circ \iota = \id$.
Now we shall prove $\iota \circ \deg = \id$.
First, we observe that $[\ep_i]^{f_{ij}} = [\ep_j]^{f_{ji}}$ when $i \sim j$ by \eqref{eq:Whom}.
Since $f_{ij} = d_j / d_{ij}$, we have
\[ [\ep_i]^{r/d_i} = ([\ep_i]^{f_{ij}})^{rd_{ij}/d_id_j} = ([\ep_j]^{f_{ji}})^{rd_{ij}/d_id_j} = [\ep_j]^{r/d_j}\]
for any $i,j \in I$ with $i \sim j$.
Since $C$ is assumed to be irreducible, it follows that $[\ep_i]^{r/d_i} = [\ep_j]^{r/d_j}$ for any $i, j \in I$.
Furthermore, since $\tG_f$ is torsion-free, we get
\begin{equation} \label{eq:epd}
[\ep_i]^{d_j/d} = [\ep_j]^{d_i/d} \qquad \text{for any $i, j \in I$}.
\end{equation} 
Using \eqref{eq:epd}, for each $i \in I$, we find
\[ \iota( \deg [\ep_i]) = e^{d_i/d} = \prod_{j \in I}[\ep_j]^{a_jd_i/d} = \prod_{j \in I}[\ep_i]^{a_j d_j /d} = [\ep_i].\]
The equality $\iota(\deg [\alpha^{(g)}_{ij}]) = [\alpha^{(g)}_{ij}]$ is obvious. 
Thus we conclude that $\iota \circ \deg = \id$ holds. 
\end{proof}

In particular, we have the isomorphism of group rings $\Z[\tG_f] \simeq \Z[\Gamma']$.
Using the notation in the above proof, we consider the formal roots $e^{1/d}$ and $w^{1/2}$. Then we obtain the isomorphism
$\Z[\tG_f][e^{1/d}, w^{1/2}] \simeq \Z[\Gamma].$
This means that our deformed GCM $C(q,t,\mmu)$ can be specialized to any other deformation of $C$ which arises from a grading of the quiver $\tQ$ respecting the potential $W_\Omega$ (formally adding roots of deformation parameters if necessary).  

\subsection{$t$-Cartan matrices and representations of modulated graphs}
\label{Ssec:species}
In this subsection, we discuss the $t$-Cartan matrix $C(1,t)$, which is obtained from our $(q,t)$-deformed GCM $C(q,t)$ by evaluating the parameter $q$ at $1$. 
 Note that this kind of specialization is also studied by Kashiwara-Oh~\cite{KO} in the case of finite type very recently. Here we give an interpretation of the $t$-Cartan matrix from the viewpoint of certain graded algebras arising from an $F$-species. 

First, we briefly recall the notion of acyclic $F$-species over a base field $F$ \cite{Gab, Rin}. Let $I=\{1, \dots, n\}$.
By definition, an \emph{$F$-species} $(F_i, {}_iF_{j})$ over $F$ consists of 
\begin{itemize}
\item a finite dimensional skew-field $F_i$ over $F$ for each $i \in I$;
\item an $(F_i, F_j)$-bimodule ${}_i F_{j}$ for each $i, j \in I$ such that $F$ acts centrally on ${}_i F_{j}$ and $\dim_{F}{}_i F_{j}$ is finite;
\item There does not exist any sequence  $i_1, \dots, i_l, i_{l+1}=i_1$ such that ${}_{i_k} F_{i_{k+1}}\neq 0$ for each $k=1, \dots, l$. 
\end{itemize} 
For ${}_iF_j\neq 0$, we write ${}_{F_i}({}_iF_j) \simeq F_i^{\oplus {-c_{ij}}}$ and $({}_iF_j)_{F_j} \simeq F_j^{\oplus {-c_{ji}}}$. If we put $c_{ii}=2$ and $c_{ij}=0$ for ${}_iF_j=0={}_jF_i$, the matrix $C\seq ({c_{ij}})_{i,j \in I}$ is clearly a GCM with left symmetrizer $D = \diag(\dim _F F_i \mid i\in I )$. We have an acyclic orientation $\Omega$ of this GCM determined by the conditions ${}_iF_j\neq 0$. 
{Following} our convention in \S \ref{Ssec:notation}, we write $\dim_F F_i = {d_i}$. 
For our $F$-species $(F_i, {}_iF_j)$, we set $S \seq \prod_{i \in I} F_i$ and $B \seq \bigoplus_{(i,j) \in \Omega} {}_iF_j$. Note that $B$ is an $(S,S)$-bimodule. 
We define a finite dimensional hereditary algebra $T = {T(C, D, \Omega)}$ to be the tensor algebra $T \seq T_S(B)$. 
Note that we use the same convention for $T(C, D, \Omega)$ as that in Gei\ss-Leclerc-Schr\"oer~\cite{GLS3}, unlike our dual convention for the algebra $\Pi(\ell)$.

We can also define the preprojective algebra (see \cite{DR} for details). For $(i,j)\in \Omega$, there exists a $F_j$-basis $\{x_1, \dots, x_{{|c_{ji}|}}\}$ of ${}_iF_j$ and a $F_j$-basis $\{y_1, \dots, y_{{|c_{ji}|}}\}$ of $\Hom_{F_j}({}_iF_j, F_j)$ such that for every $x \in {}_iF_j$
we have
$x = \sum_{i=1}^{{|c_{ji}|}} y_i(x)x_i.$
We have the canonical element $\mathtt{c}_{ij} = \sum_{i=1}^{{|c_{ji}|}} x_i \otimes_{F_i} y_i \in {}_iF_j \otimes_{F_j} \Hom_{F_i}({}_iF_j, F_i)$ which does not depend on our choice of basis $\{x_i\}$ and $\{y_j\}$. Letting ${}_jF_i\coloneqq \Hom_{F_j}({}_iF_j, F_j)$ for $(i,j)\in \Omega$, we can also define the similar canonical element $\mathtt{c}_{ji}\in {}_jF_i \otimes_{F_i} {}_iF_j$.
We put $\overline{B}\coloneqq \bigoplus_{(i,j)\in \Omega}({}_iF_j \oplus {}_jF_i)$, and define the preprojective algebra $\Pi_T=\Pi_T(\ell)$ of the algebra $T$ as
$$T_S(\overline{B})/\langle \sum_{(i,j)\in \Omega}\mathrm{sgn}_{\Omega}(i,j)\mathtt{c}_{ij}\rangle.$$
Let $P^T_i$ (resp. $P^{\Pi_T}_i$) denote the indecomposable projective $T$-module (resp. $\Pi_T$-module) associated with $i$, and $\tau_T$ the Auslander-Reiten translation for (left) $T$-modules.
Note that this algebra $\Pi_T$ satisfies $P^{\Pi_T}_i=\bigoplus_{k\geq 0} \tau_T^{-k} P^T_i$ 
by an argument on the preprojective component of the Auslander-Reiten quiver of $T$ similar to \cite[Proposition 4.7]{So}. Note that our $F$-species $(F_i, {}_iF_j)$ is nothing but a \emph{modulated graph} associated with ${(C, D,\Omega)}$ in the sense of Dlab-Ringel~\cite{DR}, although we will work with these algebras along with a context of a deformation of $C$.

Although there is obviously no nontrivial $\Z$-grading on $S$ by the fact $F_i$ is a finite dimensional skew-field, we can nevertheless endow $T$ and $\Pi_T$ with a $t^{\Z}$-grading induced from their tensor algebra descriptions.
Each element of ${}_iF_j$ has degree $t$. We remark that if we specifically choose a decomposition of each ${}_iF_j$ like $F(\!(\ep)\!)$-species $\Tilde{H}$ in \cite[\S4.1]{GLS6} and define its preprojective algebra, then we can also endow these algebras with natural $\mmu^{\Z}$-gradings and homogeneous relations by using \cite[Lemma 1.1]{DR}. But we only consider the $t^{\Z}$-grading here since our aim is to interpret the $t$-Cartan matrix. 
By our $t^{\Z}$-grading, our algebra $\Pi_T$ satisfies the condition (A) in \S\ref{Ssec:pga} (with $\kk = F$).

We have the following complex of $t$-graded modules for each simple module $F_i$:

\begin{Lem}
\label{Thm:Pres2}
The complex
\begin{equation}
t^2 P^{\Pi_T}_i \xrightarrow{\psi^{(i)}}
\bigoplus_{j\sim i} (P_j^{\Pi_T})^{\oplus (-t{C_{ji}(1,t)})}
\rightarrow
P^{\Pi_T}_i \to F_i \to 0.
\end{equation}
is exact.
Moreover, the followings hold.
\begin{enumerate}
\item\label{res1sp} When $C$ is of infinite type, $\Ker \psi^{(i)} =0$ for all $i \in I$. In particular, each object in $\Pi_T\umod_{t^\Z}^+$ has projective dimension at most $2$.
\item\label{res2sp} When $C$ is of finite type, we have $\Ker \psi^{(i)} \cong t^h F_{i^*}$ for each $i \in I$.
\end{enumerate}
\end{Lem}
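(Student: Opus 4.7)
The plan is to imitate the proof of Theorem~\ref{Thm:Pres}, with the species data $(F_i, {}_iF_j)$ playing the role previously played by $(H_i, {}_iH_j)$. I will first construct a $t^{\Z}$-graded bimodule resolution of $\Pi_T$, then apply $(-)\otimes_{\Pi_T}F_i$ to extract the claimed complex of left modules. Using the presentation $\Pi_T = T_S(\overline{B})/\langle r\rangle$ with $r=\sum_{(i,j)\in\Omega}\mathrm{sgn}_\Omega(i,j)\mathtt{c}_{ij}$, I construct the sequence of $t^{\Z}$-graded $(\Pi_T,\Pi_T)$-bimodules
\begin{equation*}
t^2\,\Pi_T\otimes_S\Pi_T
\xrightarrow{\psi}
\bigoplus_{i\sim j}\Pi_T e_j\otimes_{F_j}{}_jF_i\otimes_{F_i} e_i\Pi_T
\xrightarrow{\varphi}
\Pi_T\otimes_S\Pi_T\to \Pi_T\to 0,
\end{equation*}
where $\psi$ is built from the canonical elements $\mathtt{c}_{ij}$ (mirroring the GLS definition) and $\varphi$ is multiplication. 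Exactness at the three rightmost terms is a formal consequence of the tensor-algebra-modulo-quadratic-relation description and transcribes the arguments of \cite[\S 11]{GLS}.

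Applying $(-)\otimes_{\Pi_T}F_i$ to this bimodule complex and using ${}_{F_j}({}_jF_i)\cong F_j^{\oplus|c_{ji}|}$, one obtains $\Pi_T e_j\otimes_{F_j}{}_jF_i\cong (P^{\Pi_T}_j)^{\oplus|c_{ji}|}$ with an additional grading shift by $t$ (since elements of $\overline{B}$ have degree $t$). Combined with the identity $-tC_{ji}(1,t)=t|c_{ji}|$ for $i\sim j$, this reproduces the stated multiplicities and the desired complex. In the infinite-type case, $\psi$ is itself injective in the bimodule complex: this is the classical statement (\cite{DR} and Crawley-Boevey) that $\Pi_T$ has global dimension exactly $2$ for non-Dynkin modulated graphs, and it may alternatively be verified by a direct species analogue of the GLS argument. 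Hence $\ker\psi^{(i)}=0$, proving \eqref{res1sp}. The ``in particular'' statement then follows since every object of $\Pi_T\umod_{t^{\Z}}^+$ admits a graded filtration whose subquotients are shifts of the $F_i$ (by condition (A), cf.~Lemma~\ref{Lem:Sbasis}), so a standard horseshoe argument propagates the bound.

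For the finite-type case \eqref{res2sp}, $\Pi_T$ is finite-dimensional and is well-known to be a Frobenius algebra whose Nakayama permutation is $i\mapsto i^*$. Using $P_i^{\Pi_T}=\bigoplus_{k\ge 0}\tau_T^{-k}P_i^T$ and the fact that the $k$-th Auslander-Reiten translate $\tau_T^{-k}P_i^T$ is concentrated in $t$-degree $k$ for our grading, one computes that the socle of $P_i^{\Pi_T}$ is a copy of $F_{i^*}$ concentrated in degree $t^h$. Combining this socle computation with the exactness of the other three terms pins down $\ker\psi^{(i)}\cong t^h F_{i^*}$. The main obstacle is precisely this finite-type identification: while each ingredient (the Frobenius property, the Nakayama permutation $i\mapsto i^*$, and the Coxeter length of the preprojective component of the Auslander-Reiten quiver) is classical, aligning the grading shift $t^h$ with $i\mapsto i^*$ requires careful bookkeeping along the preprojective component. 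A cleaner alternative might be to deduce \eqref{res2sp} by specializing Theorem~\ref{Thm:Pres}\,(2) along a suitable base change from $\Pi(\ell)$ to $\Pi_T$, but the direct species-theoretic argument appears to be the most natural route.
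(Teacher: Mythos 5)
Your proof is essentially correct, but it takes a genuinely different route from the paper. The paper's own proof is short and leans entirely on Auslander--Reiten theory for the hereditary algebra $T$: it uses the identification $P_i^{\Pi_T}=\bigoplus_{k\ge 0}\tau_T^{-k}P_i^T$ set up just before the lemma, deduces exactness and part (1) from the structure of the preprojective component (citing the species analogue in the literature), and gets part (2) from self-injectivity of $\Pi_T$ in Dynkin type together with the computation of the Nakayama permutation, in parallel with Theorem~\ref{Thm:Pres}. You instead build a $t^{\Z}$-graded bimodule complex analogous to \eqref{eq:bmodres} out of the canonical elements $\mathtt{c}_{ij}$ and tensor down; this is closer in spirit to the GLS/\cite{FM} treatment of $\Pi(\ell)$ and has the advantage of giving the ``projective dimension at most $2$'' claim directly (tensoring the bimodule resolution with any $M\in\Pi_T\umod_{t^\Z}^+$ already yields a length-two projective resolution, since $e_iM$ is free over the skew-field $F_i$ --- this is cleaner than your horseshoe argument over an infinite filtration). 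Two points deserve attention. First, the step ``$\psi$ is injective in the bimodule complex, hence $\Ker\psi^{(i)}=0$'' is not automatic, since $(-)\otimes_{\Pi_T}F_i$ is only right exact; it does go through, but you should say why, namely that all terms of the bimodule complex are flat (indeed free) as right $\Pi_T$-modules, so the exact bimodule complex stays exact after tensoring with any left module. Second, in the finite-type case your degree bookkeeping is off by the shift on the leftmost term: since $\Ker\psi^{(i)}\cong t^hF_{i^*}$ sits inside $t^2P_i^{\Pi_T}$, the socle of $P_i^{\Pi_T}$ itself is $t^{h-2}F_{i^*}$, not $t^hF_{i^*}$; this is exactly the $(h-2,h)$-almost Koszul phenomenon mentioned in the paper's subsequent remark, and it is worth getting right since it is the only nontrivial computation in part (2).
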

\begin{proof}
    The statement \eqref{res1sp} is deduced from the Auslander-Reiten theory for $T$ (e.g. \cite[Proposition 7.8]{AHIKM}). The statement \eqref{res2sp} follows from \cite[\S 6]{So}. Note that $C$ is of finite type if and only if $\Pi_T$ is a self-injective finite dimensional algebra and its Nakayama permutation can be similarly computed as Theorem~\ref{Thm:Pres} by an analogue of \cite[\S 3]{Miz} (see Remark~\ref{rem:AHIKM}).
\end{proof}
\begin{Cor} \label{Cor:chi2}
For any $i,j \in I$, the followings hold.
\begin{enumerate}
\item \label{Cor:chi2:1}
When $C$ is of finite type, we have 
\[ {
d_i \tC_{ij}(1,t) = 
\frac{t}{1-t^{2h}}\left(\dim_{t^{\Z}}(e_i P_j^{\Pi_T}) - t^h \dim_{t^{\Z}}(e_{i^*} P_j^{\Pi_T}) \right).}
\]
\item \label{Cor:chi2:2} When $C$ is of infinite type, we have
\[{
d_i\tC_{ij}(1,t) = t\dim_{t^\Z}(e_i P_j^{\Pi_T}).} \]
\end{enumerate}
Here $\dim_{t^\Z}$ denotes the graded dimension of $t^\Z$-graded $F$-vector spaces.
\end{Cor}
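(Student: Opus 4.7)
The plan is to adapt the proof of Corollary~\ref{Cor:chi} to the $F$-species setting. The key simplification is that $(\Pi_T)_0 = \prod_{i\in I}F_i$ is a product of skew-fields, so the simple modules $F_i$ themselves serve as basis elements of the completed Grothendieck group, playing the role that the locally free modules $E_i$ did in the generalized preprojective case.

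First, one verifies that $\Pi_T$ satisfies condition~(A) of \S\ref{Ssec:pga} (with trivial $G_0$) by an argument parallel to Lemma~\ref{Lem:tpos}; then Lemma~\ref{Lem:Sbasis} supplies a $\Z(\!(t)\!)$-basis $\{[F_i]\}_{i\in I}$ of $\hat K(\Pi_T\umod_{t^\Z}^+)$. Taking the alternating sum of the projective resolution in Lemma~\ref{Thm:Pres2}, and absorbing the outer $[P_i^{\Pi_T}]$ and $t^2[P_i^{\Pi_T}]$ terms into the sum over $j$ via the identity $tC_{ii}(1,t)=1+t^2$, one obtains in $\hat K(\Pi_T\umod_{t^\Z}^+)$ the identity
\begin{equation*}
[F_i] + \varepsilon\, t^h\, [F_{i^*}] \;=\; t\sum_{j\in I} C_{ji}(1,t)\,[P_j^{\Pi_T}],
\end{equation*}
where $\varepsilon$ equals $1$ or $0$ according as $C$ is of finite or infinite type.

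Next, since each simple $F_i$ is concentrated at vertex $i$ with $\dim_F F_i = d_i$ and $e_jF_k=\delta_{jk}F_k$, the graded composition multiplicity of $[F_i]$ in $[M]$ for any $M\in\Pi_T\umod_{t^\Z}^+$ equals $d_i^{-1}\dim_{t^\Z}(e_iM)$. Specialising to $M = P_j^{\Pi_T}$ yields
\begin{equation*}
[P_j^{\Pi_T}] \;=\; \sum_{i\in I} d_i^{-1}\,\dim_{t^\Z}(e_iP_j^{\Pi_T})\,[F_i]
\end{equation*}
in $\hat K(\Pi_T\umod_{t^\Z}^+)$. Inverting the identity above --- legitimate because $C(1,t)$ is invertible over $\Z(\!(t)\!)$ by \eqref{eq:X} --- and comparing coefficients of $[F_i]$ yields statement~(2) immediately in the infinite case. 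In the finite case, one must additionally invert the operator $I+t^h\nu$, where $\nu\colon i\mapsto i^*$; combining the resulting equations for $i$ and for $i^*$ and using $d_{i^*}=d_i$ allows one to isolate $d_i\,\tC_{ij}(1,t)$ and derive statement~(1).

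The main bookkeeping difficulty is the finite-type inversion: the involution $\nu$ entangles the equations for $i$ and $i^*$, and the denominator factor appearing in (1) arises precisely from the formal inversion of $I+t^h\nu$; tracking this involution consistently through the Grothendieck-group computation is the most delicate point.
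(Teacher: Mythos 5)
Your proposal is correct and follows essentially the same route as the paper: the paper's own (very terse) proof likewise combines the expansion $[P_j^{\Pi_T}]=\sum_{i}d_i^{-1}\dim_{t^\Z}(e_iP_j^{\Pi_T})[F_i]$ with the alternating sum of the resolution in Lemma~\ref{Thm:Pres2} and the duality $\dim_{t^\Z}(e_i\Pi_T\otimes_{\Pi_T}F_j)=\delta_{ij}d_i$, ``arguing as in \S\ref{Ssec:EP}.'' One remark: carried to completion, your inversion of $\id+t^h\nu$ yields the denominator $1-t^{2h}$ (consistent with the formula for $\Psi$ in \S\ref{Ssec:EP}, and easily checked for type $\mathrm{A}_1$, where $d_1\tC_{11}(1,t)=t/(1+t^2)$), so the factor $1-t^h$ displayed in part (1) of the statement --- and likewise in Corollary~\ref{Cor:chi}~(1) --- appears to be a typo for $1-t^{2h}$; your derivation gives the correct normalization.
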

\begin{proof}
    The equality $[P_j^{\Pi_T}]=\sum_{i\in I} (\dim_{t^{\Z}}(e_iP_j^{\Pi_T}) / \dim_F F_i) [F_i]$ in $\hK(\Pi_T\umod_{t^\Z})$
and an equality $\dim_{t^{\Z}} e_i \Pi_T \otimes_{\Pi_T} F_j = \delta_{ij} {d_i}$
immediately yield our assertion by Lemma~\ref{Thm:Pres2} with arguments similar to the case of the generalized preprojective algebras in \S \ref{Ssec:EP}.
\end{proof}
\begin{Rem}\label{rem:AHIKM} In the case of our algebra $\Pi_T$, the two-sided ideal $J_i\coloneqq \Pi_T(1-e_i)\Pi_T$ and the ideal semi-group $\langle J_1, \dots, J_n \rangle$ also gives the Weyl group symmetry on its module category analogously to \cite{IR,BIRS,Miz} (see \cite[\S 7.1]{AHIKM}). Even if we consider the algebra $\Pi_T$ and $t^{\Z}$-homogeneous ideal $J_i$, we can also establish the similar braid group symmetry as \S\ref{sec:braid} after the specialization $q\rightarrow 1$ and $\mmu \rightarrow 1$ by Lemma~\ref{Thm:Pres2}.
    \end{Rem}
\begin{Rem}
The algebra $\Pi_T$ is a Koszul algebra for non-finite types and $(h-2, h)$-almost Koszul algebras for finite types in the sense of \cite{BBK} with our $t^{\Z}$-gradings. Thus Corollary~\ref{Cor:chi2} might be interpreted in the context of \cite[\S 3.3]{BBK}.
\end{Rem}

As a by-product of this description, we have the following generalization of the formula in \cite[Proposition 2.1]{HL15} and \cite[Proposition 3.8]{Fuj22} for any bipartite symmetrizable Kac-Moody type. 
For a $t$-series $f(t) = \sum_k f_k t^k \in \Z[\![t, t^{-1}]\!]$, we write $[f(t)]_k \seq f_k$ for $k \in \Z$.

\begin{Prop} \label{Prop:Ext}
Assume that $C$ is bipartite and take a height function $\xi$ for $C$ such that $\Omega_\xi = \Omega$ (see \S\ref{Ssec:cinv}).
Let $(F_i, {}_iF_j)$ be a modulated graph associated with ${(C, D,\Omega)}$ as above. 
Let $M \simeq \tau_T^{-k}P^T_i$ and $N \simeq \tau_T^{-l}P_j^T$ be any two indecomposable preprojective $T$-modules.
When $C$ is of infinite type, we have 
\begin{equation}
\label{eq:ExtT}
\dim_F \Ext_T^1(M,N) =
\left[{d_i\tC_{ij}(1,t)}\right]_{(\xi(i)+2k)-(\xi(j)+2l) -1}.\end{equation}
When $C$ is of finite type, the equality \eqref{eq:ExtT} still holds  provided that \begin{equation} \label{eq:1&h-1}
1 \le (\xi(i)+2k)-(\xi(j)+2l) -1 \le h-1.
\end{equation}
Otherwise, we have $\Ext_T^1(M,N) = 0$.
\end{Prop}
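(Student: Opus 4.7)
The strategy is to combine three ingredients: the decomposition $P_i^{\Pi_T}=\bigoplus_{m\ge 0}\tau_T^{-m}P_i^T$ as (left) $T$-modules recalled before Lemma~\ref{Thm:Pres2}, the Auslander-Reiten duality for the hereditary algebra $T$, and Corollary~\ref{Cor:chi2}, which expresses $d_i\tC_{ij}(1,t)$ in terms of the $t^\Z$-graded dimension of $e_iP_j^{\Pi_T}$.

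On the Ext side, I would apply the Auslander-Reiten formula $\Ext_T^1(X,Y)\cong D\Hom_T(Y,\tau_T X)$ for indecomposable non-projective $X$ and use that $\tau_T$ is fully faithful on the preprojective component. For $M=\tau_T^{-k}P_i^T$ and $N=\tau_T^{-l}P_j^T$ this yields
\[
\Ext_T^1(M,N)\cong D\Hom_T(P_j^T,\tau_T^{-(k-l-1)}P_i^T)=D\bigl(e_j\,\tau_T^{-(k-l-1)}P_i^T\bigr)
\]
whenever $k\ge l+1$, and $\Ext_T^1(M,N)=0$ when $k\le l$ (as the computation reduces to Ext between projectives over a hereditary algebra). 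In finite type one additionally needs $\tau_T^{-(k-l-1)}P_i^T$ to remain in the preprojective range; otherwise it is injective or zero and the Ext vanishes.

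The link with $\tC_{ij}$ rests on the following graded refinement, which I would isolate as a separate lemma: with the $t^\Z$-grading on $\Pi_T$ (where every generator of $\overline{B}$ has degree $t$) and a height function $\xi$ satisfying $\Omega_\xi=\Omega$, the subspace $e_j\,\tau_T^{-m}P_i^T$ of $e_jP_i^{\Pi_T}$ is concentrated in the single $t$-degree $\xi(i)+2m-\xi(j)$. The reason is that the preprojective component of the AR-quiver of $T$ has the shape $\Z Q$ and carries a unique additive function that equals $\xi$ on the projectives and jumps by $2$ across each mesh; this function coincides with the $t$-degree on $\Pi_T$ because the mesh relations defining $\Pi_T$ are homogeneous of degree $t^2$ and the degree-$0$ part of $P_i^{\Pi_T}$ identifies with $P_i^T$ graded by path length in the species. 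Granting the lemma, Corollary~\ref{Cor:chi2}~\eqref{Cor:chi2:2} together with the symmetry $d_i\tC_{ij}(1,t)=d_j\tC_{ji}(1,t)=t\dim_{t^\Z}(e_jP_i^{\Pi_T})$ gives
\[
d_i\tC_{ij}(1,t)=\sum_{m\ge 0}\dim_F\!\bigl(e_j\tau_T^{-m}P_i^T\bigr)\,t^{\xi(i)+2m-\xi(j)+1},
\]
and extracting the coefficient of $t^{(\xi(i)+2k)-(\xi(j)+2l)-1}$ forces $m=k-l-1$, reproducing the Ext formula. For finite type, Corollary~\ref{Cor:chi2}~\eqref{Cor:chi2:1} adds a factor $(1-t^h)^{-1}$ together with the $i^*$-correction whose contribution only begins in degrees $\ge h+1$; the hypothesis $1\le s\le h-1$ isolates the first period where the same computation applies, while outside this range $\tau_T^{-(k-l-1)}P_i^T$ becomes injective or zero and $\Ext_T^1(M,N)$ also vanishes.

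The main obstacle is the graded refinement formulated in the third paragraph. In the classical symmetric (path-algebra) case it is essentially folklore, but in the present modulated-graph setting with a non-trivial symmetrizer one must carefully check compatibility with the $(F_i,F_j)$-bimodule structure of each ${}_iF_j$ and with the sign/orientation conventions used both in the species presentation of $T$ and in the mesh relations generating $\Pi_T$. Once this lemma is established, the remainder is routine bookkeeping with Auslander-Reiten duality and Corollary~\ref{Cor:chi2}.
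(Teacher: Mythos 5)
Your proposal follows essentially the same route as the paper: Auslander--Reiten duality reduces $\Ext_T^1(\tau_T^{-k}P_i^T,\tau_T^{-l}P_j^T)$ to $\dim_F e_j\tau_T^{-(k-l-1)}P_i^T$, and the "graded refinement" you isolate as a lemma is exactly the paper's isomorphism ${}_T(P_i^{\Pi_T})^{[u]}\cong\tau_T^{-m}P_i^T$ for $u=\xi(i)+2m$ (obtained from the decomposition $M=\bigoplus_u M^{[u]}$ with $M^{[u]}=\bigoplus_i e_iM_{u-\xi(i)}$), after which Corollary~\ref{Cor:chi2} finishes the computation. Your explicit appeal to the symmetry $d_i\tC_{ij}(1,t)=d_j\tC_{ji}(1,t)$ to pass between $e_jP_i^{\Pi_T}$ and $e_iP_j^{\Pi_T}$ is a point the paper leaves implicit, but otherwise the arguments coincide, including the treatment of the finite-type window $1\le s\le h-1$.
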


\begin{proof}
We may deduce the assertion by a combinatorial thought using the formula \eqref{eq:inv3} as in \cite{HL15} or \cite{Fuj22}. 
But, here we shall give another proof using the algebra $\Pi_T$. 

%Let $\Pi = \Pi(\ell)$ be the generalized preprojective algebra defined over $\kk$ as in \S\ref{Ssec:GPA}.
%Let $H = H(\ell, \Omega)$ be a $\kk$-subalgebra of $\Pi$ generated by $H_i$ for all $i \in I$ together with all the elements $\alpha_{ij}^{(g)}$ such that $(i,j) \in \Omega$. 
%The algebra $H$ is identical to the $1$-Iwanaga-Gorenstein algebra $H({}^\mathtt{t}C, r \ell D^{-1}, \Omega)$ introduced in \cite{GLS}. 
%In this proof, we regard $\Pi_T$ and $T$ as $t^\Z$-graded algebras.
For any $t^\Z$-graded $T$-module $M$, we have a decomposition $M = \bigoplus_{u \in \Z} M^{[u]}$, where $M^{[u]} \seq \bigoplus_{i \in I}e_iM_{u - \xi(i)}$. 
Note that each $M^{[u]}$ is an $T$-submodule of $M$, 
since $\xi$ is a height function satisfying $\Omega_\xi = \Omega$. 
%due to \cite{GLS}, where $P_i^H \seq H e_i$ is the indecomposable projective $H$-module associated with $i$ and $\tau_H$ is the Auslander-Reitsn translation. 
We have the following isomorphism
\begin{equation} \label{eq:isomHP}
{}_T(P_i^{\Pi_T})^{[u]} \cong \begin{cases}
\tau_T^{-k}P_i^{T} & \text{if $u = \xi(i) + 2k$ for $k \in \Z_{\ge 0}$}, \\
0 & \text{otherwise}
\end{cases} 
\end{equation}
as (ungraded) $T$-modules.
Now, we have for each $M \simeq \tau_T^{-k}P^T_i$ and $N \simeq \tau_T^{-l}P_j^T$
\begin{align*}
\dim_F \Ext_T^1(M,N) &=
\dim_F \Ext_T^1(\tau_T^{-k}P^T_i, \tau_T^{-l}P^T_j) \allowdisplaybreaks \\
&= \dim_F e_j\tau_T^{(k-l-1)}P^T_i&&\text{(cf.~\cite[\S IV 2.13]{ASS})} \allowdisplaybreaks \\
&= \dim_F e_j(P_i^{\Pi_T})^{[\xi(i)+2(k-l-1)]}&&\text{(\ref{eq:isomHP})} \allowdisplaybreaks \\
&= \dim_{F} (e_j{P}^{\Pi_T}_i)_{(\xi(i)+2k)-(\xi(j)+2l)-2}. 
\end{align*}
When $C$ is of infinite type, we deduce the desired equation \eqref{eq:ExtT} from Corollary~\ref{Cor:chi2}~\eqref{Cor:chi2:2}. 
When $C$ is of finite type, we can find that $(e_j {P}^{\Pi_T}_i)_{(\xi(i)+2k)-(\xi(j)+2l)-2}$ is non-zero only if the condition~\eqref{eq:1&h-1} is satisfied by an analogue of \cite[Corollary 3.9]{FM}.
When \eqref{eq:1&h-1} is satisfied, we get \eqref{eq:ExtT} by Corollary~\ref{Cor:chi2}~\eqref{Cor:chi2:1}.
\end{proof}
\begin{Rem}
    In \cite{GLS}, they also introduced the $1$-Iwanaga-Gorenstein algebra $H$ over any field $\Bbbk$ associated with a GCM $C$, its symmetrizer $D$, and an orientation $\Omega$.
    The algebra $H$ has quite similar features to our algebra $T$, and we can also show a version of Proposition~\ref{Prop:Ext} for the algebra $H$ with $t^{\mathbb{Z}}$-graded structure of the corresponding generalized preprojective algebra in a similar way.
    These algebras $H$ and $T$ have the following common dimension property of extension groups due to \cite[Proposition~5.5]{GLS3}:

    We keep the convention in Proposition~\ref{Prop:Ext}. Let $X \simeq \tau_H^{-k}P^H_i$ and $Y \simeq \tau_H^{-l}P_j^H$ be any two indecomposable preprojective $H$-modules. Then we have
    \[\dim_{\Bbbk} \Ext^1_H(X, Y) = \dim_F \Ext^1_T(M, N).\]
    Thanks to this common dimension property between the algebras $H$ and $T$, Corollary~\ref{Cor:chi} specializes to Corollary~\ref{Cor:chi2} after the specialization $q\rightarrow 1$ and $\mmu \rightarrow 1$ with Remark~\ref{Rem:GLS}.
\end{Rem}
\begin{Rem}
When the authors almost finished writing this paper, a preprint~\cite{KO23} by Kashiwara-Oh appeared in arXiv, which shows that the $t$-Cartan matrix of finite type is closely related to the representation theory of quiver Hecke algebra.
Combining their main theorem  with Proposition~\ref{Prop:Ext} above, we find a relationship between the representation theory of the modulated graphs and that of quiver Hecke algebras, explained as follows.

Let $C$ be a Cartan matrix of finite type, and let $\fg$ denote the simple Lie algebra associated with $C$. 
Let $R$ be the quiver Hecke algebra associated with $C$ and its minimal symmetrizer $D$, which categorifies the quantized enveloping algebra $U_q(\fg)$.
We are interested in the $\Z_{\ge 0}$-valued invariant $\fd(S, S')$ defined by using the R-matrices, which measures how far two ``affreal" $R$-modules $S$ and $S'$ are from being mutually commutative with respect to the convolution product (or parabolic induction).
Given an (acyclic) orientation $\Omega$ of $C$, we have an affreal $R$-module $S_\Omega(\alpha)$ for each positive root $\alpha$ of $\fg$, called a cuspidal module.   
See \cite{KO23} for details. 

On the other hand, we have a generalization of the Gabriel theorem for $F$-species (see \cite{DR2,DR3,Rin}).
In particular, for each positive root $\alpha$ of $\fg$, there exists an indecomposable module $M_\Omega(\alpha)$ over the algebra $T = {T(C,D,\Omega)}$ satisfying $\sum_{i \in I} (\dim_{F_i} e_i M_{\Omega}(\alpha)) \alpha_i = \alpha$, uniquely up to isomorphism. 
Note that every indecomposable $T$-module is a preprojective module when $C$ is of finite type.

Then, \cite[Main Theorem]{KO23} and Proposition~\ref{Prop:Ext} tell us that the equality 
\begin{equation} \label{eq:d=Ext} \fd\left(S_{{\Omega}^*}(\alpha), S_{{\Omega}^*}(\beta)\right) = \dim_F \Ext_T^1(M_\Omega(\alpha), M_\Omega(\beta)) + \dim_F \Ext_T^1(M_\Omega(\beta), M_\Omega(\alpha))
\end{equation}
holds for any positive roots $\alpha$ and $\beta$, where ${\Omega}^*$ denotes the orientation of $C$ opposite to $\Omega$.
In particular, \eqref{eq:d=Ext} implies that the following three conditions are mutually equivalent for any positive roots $\alpha$ and $\beta$ :
\begin{itemize}
\item The convolution product $S_{{\Omega}^*}(\alpha) \circ S_{{\Omega}^*}(\beta)$ is simple;
\item We have an isomorphism $S_{{\Omega}^*}(\alpha) \circ S_{{\Omega}^*}(\beta) \simeq S_{{\Omega}^*}(\beta) \circ S_{{\Omega}^*}(\alpha)$ of $R$-modules;  
\item We have $\Ext_T^1(M_\Omega(\alpha), M_\Omega(\beta)) = \Ext_T^1(M_\Omega(\beta), M_\Omega(\alpha)) =0.$ 
\end{itemize}
Note that an analogous statement in the case of fundamental modules over the quantum loop algebra of type $\mathrm{ADE}$ is obtained in \cite{Fuj22}.
\end{Rem}

\subsection*{Acknowledgments}
The authors are grateful to Christof Gei\ss, Naoki Genra, David Hernandez, Yuya Ikeda, Osamu Iyama, Bernhard Keller, Taro Kimura, Yoshiyuki Kimura, Bernard Leclerc and Hironori Oya for useful discussions and comments.
This series of works of the authors is partly motivated from the talk~\cite{Kels} given by Bernhard Keller. They thank him for sharing his ideas and answering some questions.
They were indebted to Laboratoire de Math\'ematiques \`a l'Universit\'e de Caen Normandie for the hospitality during their visit in the fall 2021. This work was partly supported by the Osaka City University Advanced Mathematical Institute: MEXT Joint Usage/Research Center on Mathematics and Theoretical Physics [JPMXP0619217849].

\bibliography{ref}
\end{document}